\newcommand{\href}[1]{#1} 
\newcommand{\texorpdfstring}[2]{#1} 
\newtheorem{thm}{Theorem}[section]
\newtheorem*{thm*}{Theorem}
\newtheorem{lemma}[thm]{Lemma}
\newtheorem*{lemma*}{Lemma}
\newtheorem{prop}[thm]{Proposition}
\newtheorem*{prop*}{Proposition}
\newtheorem{cor}[thm]{Corollary}
\newtheorem*{fact}{Fact}
\theoremstyle{definition}
\newtheorem{df}[thm]{Definition}
\newtheorem{ex}[thm]{Example}
\newtheorem{rmk}[thm]{Remark}
\theoremstyle{remark}
\def\ol{\overline}
\newcommand{\bbB}{\mathbb{B}}
\newcommand{\bbQ}{\mathbb{Q}}
\newcommand{\bbL}{\mathbb{L}}
\newcommand{\bbF}{\mathbb{F}}
\newcommand{\bbC}{\mathbb{C}}
\newcommand{\bbZ}{\mathbb{Z}}
\newcommand{\bbP}{\mathbb{P}}
\newcommand{\bbA}{\mathbb{A}}
\newcommand{\bbH}{\mathbb{H}}
\newcommand{\bbT}{\mathbb{T}}
\renewcommand{\AA}{\bbA}
\newcommand{\QQ}{\bbQ}
\newcommand{\BB}{\bbB}
\newcommand{\CC}{\bbC}
\newcommand{\FF}{\bbF}
\newcommand{\ZZ}{\bbZ}
\newcommand{\PP}{\bbP}
\newcommand{\TT}{\bbT}
\newcommand{\Fbar}{\overline{\FF}}
\newcommand{\cA}{\mathcal{A}}
\newcommand{\cB}{\mathcal{B}}
\newcommand{\cC}{\mathcal{C}}
\newcommand{\cD}{\mathcal{D}}
\newcommand{\cO}{\mathcal{O}}
\newcommand{\cF}{\mathcal{F}}
\newcommand{\cK}{\mathcal{K}}
\newcommand{\cL}{\mathcal{L}}
\newcommand{\cH}{\mathcal{H}}
\newcommand{\cR}{\mathcal{R}}
\newcommand{\cM}{\mathcal{M}}
\newcommand{\cE}{\mathcal{E}}
\newcommand{\cP}{\mathcal{P}}
\newcommand{\cT}{\mathcal{T}}
\newcommand{\cU}{\mathcal{U}}
\newcommand{\cV}{\mathcal{V}}
\DeclareMathOperator{\Mot}{\bf Mot }
\DeclareMathOperator{\Sch}{\bf Sch }
\newcommand{\fE}{\mathfrak E}
\newcommand{\fG}{\mathfrak G}
\newcommand{\fV}{\mathfrak V}
\newcommand{\fF}{\mathfrak F}
\newcommand{\fm}{\mathfrak m}
\newcommand{\fS}{\mathfrak S}
\newcommand{\fX}{\mathfrak X}
\newcommand{\fY}{\mathfrak Y}
\DeclareMathOperator{\Gal}{Gal}
\DeclareMathOperator{\Spec}{Spec}
\DeclareMathOperator{\Spf}{Spf}
\DeclareMathOperator{\Hom}{Hom}
\DeclareMathOperator{\End}{End}
\DeclareMathOperator{\Aut}{Aut}
\DeclareMathOperator{\car}{char}
\DeclareMathOperator{\gl}{GL}
\DeclareMathOperator{\GL}{GL}
\DeclareMathOperator{\PGL}{PGL}
\DeclareMathOperator{\tr}{tr}
\DeclareMathOperator{\Pic}{Pic}
\DeclareMathOperator{\res}{res}
\DeclareMathOperator{\img}{img}
\DeclareMathOperator{\Sym}{Sym}
\DeclareMathOperator{\ev}{ev}
\DeclareMathOperator{\chow}{CH} 
\DeclareMathOperator{\cl}{cl}
\DeclareMathOperator{\AJ}{AJ}
\DeclareMathOperator{\Ext}{Ext}
\DeclareMathOperator{\nrd}{nrd}
\DeclareMathOperator{\Corr}{Corr}
\DeclareMathOperator{\pr}{pr}
\DeclareMathOperator{\SL}{SL}
\DeclareMathOperator{\PSL}{PSL}
\DeclareMathOperator{\Fil}{Fil}
\DeclareMathOperator{\red}{red}
\DeclareMathOperator{\MF}{MF}
\DeclareMathOperator{\Rep}{Rep}
\DeclareMathOperator{\Dst}{D_{\st}}
\DeclareMathOperator{\Vst}{V_{\st}}
\DeclareMathOperator{\stab}{stab}
\DeclareMathOperator{\emb}{emb}
\DeclareMathOperator{\Ind}{Ind}
\DeclareMathOperator{\Maps}{Maps}
\newcommand{\dfn}{{:=}}
\newcommand{\injects}{\hookrightarrow}
\newcommand{\id}{\ensuremath \text{Id}}
\newcommand{\tns}[1][]{\otimes_{#1}}
\newcommand{\mtx}[4]{\left(\begin{matrix}#1&#2\\#3&#4\end{matrix}\right)}
\newcommand{\smat}[1]{\left(\begin{smallmatrix}#1\end{smallmatrix}\right)}
\newcommand{\smtx}[4]{\left(\begin{smallmatrix}#1&#2\\#3&#4\end{smallmatrix}\right)}
\newcommand{\If}{\ensuremath{\text{if }}}
\newcommand{\Else}{\ensuremath{\text{else}}}
\newcommand{\emphh}[2][ ]{%
\ifthenelse{\equal{#1}{ }}{\index{default}{#2}{\emph{#2}}}{\index{default}{#1@#2}{\emph{#2}}}%
}
\newcommand{\emphhh}[3][ ]{%
\ifthenelse{\equal{#1}{ }}{\index{default}{#2}{\emph{#3}}}{\index{default}{#1@#2}{\emph{#3}}}%
}
\def\sumprime{\mathop{\sum{\raise3pt\hbox{${}'$}}}} 
\def\prodprime{\mathop{\prod{\raise3pt\hbox{${}'$}}}}
\newcommand{\et}{{\text{et}}}
\newcommand{\dR}{{\text{dR}}}
\newcommand{\cris}{{\text{cris}}}
\newcommand{\Het}{H_\et}
\newcommand{\Hcris}{H_\cris}
\newcommand{\Hdr}{H_\dR}
\newcommand{\Hdrc}{H_{\text{dR,c}}}
\newcommand{\cHdr}{\cH_\dR}
\newcommand{\Hst}{H_\st}
\newcommand{\Char}{C_{\text{har}}}
\newcommand{\Log}{{\operatorname{Log}}}
\newcommand{\ad}{{\text{ad}}}
\renewcommand{\epsilon}{\varepsilon}
\newcommand{\mmu}{\mu\!\!\!\mu}
\newcommand{\Rmax}{{\cR^{\text{max}}}}
\newcommand{\an}{\text{an}}
\newcommand{\ur}{\text{ur}}
\newcommand{\st}{\text{st}}
\newcommand{\Bst}{B_{\st}}
\newcommand{\tto}[1]{%
\ifthenelse{\equal{#1}{}}{\to}{\stackrel{#1}{\longrightarrow}}}
\newcommand{\abs}[1]{|{#1}|}
\newcommand{\stt}{\,\mid\,}
\newcommand{\fixme}[1]{}
\begin{document}
\title{CM cycles on Shimura curves,\\and $p$-adic $L$-functions}
\author{Marc Masdeu}
\email{masdeu@math.columbia.edu}
\address{Columbia University\\
Rm 415, MC 4441\\
2990 Broadway\\
New York , NY 10027}

\thanks{}

\renewcommand{\emphh}[1]{{\emph{#1}{}}}
\begin{abstract}
Let $f$ be a modular form of weight $k\geq 2$ and level $N$, let $K$ be a quadratic imaginary field, and assume that there is a prime $p$ exactly dividing $N$. Under certain arithmetic conditions on the level and the field $K$, one can attach to this data a $p$-adic $L$-function $L_p(f,K,s)$, as done by Bertolini-Darmon-Iovita-Spie\ss{} in~\cite{bertolini2002tse}. In the case of $p$ being inert in $K$, this analytic function of a $p$-adic variable $s$ vanishes in the critical range $s=1,\ldots,k-1$, and therefore one is interested in the values of its derivative in this range. We construct, for $k\geq 4$, a Chow motive endowed with a distinguished  collection of algebraic cycles which encode these values, via the $p$-adic Abel-Jacobi map.

Our main result generalizes the result obtained by Iovita-Spie\ss{} in~\cite{iovita2003dpa}, which gives a similar formula for the central value $s=k/2$. Even in this case our construction is different from the one found in~\cite{iovita2003dpa}.
\end{abstract}
\maketitle

\tableofcontents
\section*{Acknowledgements}
The author thanks Henri Darmon and Adrian Iovita for their generosity and the help and support received during this project. The author also appreciates the valuable comments made by Bas Edixhoven and the anonymous referee.
\newpage
\section{Introduction}

Fix a quadratic imaginary field $K$, and let $f$ be a modular form defined over $\QQ$. The goal of the different theories of $p$-adic $L$-functions is to produce rigid-analytic functions attached to $f$ that interpolate the Rankin-Selberg $L$-function $L(f/K,s)$ in different ways. The theory has so far developed in two directions, which correspond to the two independent $\ZZ_p$-extensions of the field $K$: the cyclotomic and anti-cyclotomic extension.

The first approach to such $p$-adic analogues appeared in~\cite{MR0354674}, where Mazur and Swinnerton-Dyer constructed a
$p$-adic $L$-function associated to a modular form $f$ of arbitrary even
weight $n+2$ using the cyclotomic $\ZZ_p$-extension of $\QQ$. Mazur,
Tate and Teitelbaum formulated in~\cite{MR830037} a conjectural formula that related the order of vanishing of this $p$-adic $L$ function to that $L(f,s)$, and in~\cite{MR1198816}, Greenberg and Stevens proved that formula in the case of weight $2$. Also,
Perrin-Riou~\cite{MR1168369} obtained a Gross-Zagier type formula for the central value using $p$-adic
heights.

Nekov\'{a}\v{r}~\cite{MR1343644} extended the result of Perrin-Riou to higher weights, by using the definition of $p$-adic height that he had already introduced in his earlier paper~\cite{MR1263527}. Combining this result with his previous work on Euler systems~\cite{MR1135466}, he obtained a result of Kolyvagin-type for the cyclotomic $p$-adic $L$-function.

Bertolini and Darmon, in a series of papers~\cite{MR1419003},~\cite{MR1614543} and~\cite{MR1695201}, constructed another $p$-adic $L$-function
which depends instead on the \emph{anti-cyclotomic} $\ZZ_p$-extension of a
fixed quadratic imaginary field $K$. One important feature of this
construction is that it is purely $p$-adic, unlike its cyclotomic
counterpart. Bertolini and Darmon formulated
the analogous conjectures to those of
Teitelbaum~\cite{teitelbaum1990voa}, and proved them in the case of weight $2$.

After these papers, Iovita and Spie\ss{} entered the project initiated by
Bertolini and Darmon. Their goal was to generalize the previous constructions and results to
higher weights and to make them more conceptual. They
considered certain Selmer groups, in the spirit of the conjectures of
Bloch and Kato~\cite{bloch1990fat} as generalized by Fontaine and
Perrin-Riou in~\cite{MR1265546}.

Assume that the level $N$ of $f$ can be factored as $pN^-N^+$, where $N^-$ is the product of an odd number of distinct primes none of which equals $p$, and let $K$ be a quadratic imaginary number field such that:
\begin{enumerate}
\item All prime divisors of $N^-$ are inert in $K$, and
\item all prime divisors of $N^+$ are split in $K$.
\end{enumerate}
In~\cite{bertolini2002tse}, taking ideas from the work of Schneider in~\cite{MR756097}, the four
authors construct under these restrictions the anti-cyclotomic $p$-adic $L$-function attached to the rigid modular form $f$ and the quadratic imaginary field $K$, and obtain a formula which computes the derivative of this $p$-adic
$L$-function at the central point in terms of an integral on the
$p$-adic upper half plane, using the integration theory introduced by
Coleman in~\cite{coleman1985tpc}. Assume for simplicity that the ideal class number of $K$ is $1$.  Using their techniques one can easily show that, when $p$ is inert in $K$, the anti-cyclotomic $p$-adic $L$-function vanishes at all the critical values. Moreover, one computes a formula for the derivative at all the
 values in the critical range: if $f$ is a modular form of even weight $n+2$, and we denote by $L_p(f,K,s)$ the anti-cyclotomic $p$-adic $L$-function attached to $f$ and $K$, then for all $0\leq j\leq n$, one has:
\begin{equation}
\label{eq:general}
 L_p'(f,K,j+1)=\int_{\ol z_0}^{z_0} f(z)(z-z_0)^{j}(z-\ol z_0)^{n-j}
dz,
\end{equation}
where, $z_0,\ol z_0\in \cH_p(K)$ are certain conjugate Heegner points on
the $p$-adic upper-half plane. In $2003$, Iovita and Spie\ss{}~\cite{iovita2003dpa}
interpreted the quantity appearing in the right hand side of formula~\eqref{eq:general}, in the case of $j=\frac{n}{2}$, as the image of a Heegner cycle under a $p$-adic analogue to the Abel-Jacobi map. This paper gives a similar geometric interpretation of the quantity
appearing in the right hand side of the previous formula, for all values of $j$.

Let $M_{n+2}(X)$ denote the space of modular forms on a Shimura curve $X$, of weight
$n+2\geq 4$. The case of weight $2$ is excluded for technical reasons, and because it has already been studied by other authors. Let $K$ be a quadratic imaginary field in which $p$ is inert, and fix an elliptic curve $E$  with
complex multiplication. In this setting, we construct a Chow motive $\cD_{n}$ over $X$, and a collection
of algebraic cycles $\Delta_\varphi$ supported in the fibers over CM-points of $X$,
 indexed by  isogenies
 $\varphi\colon E\to E'$, of elliptic
curves with complex multiplication. The motive $\cD_{n}$ is obtained from a self-product of a certain number of abelian surfaces, together with a self-product of the elliptic curve $E$. The cycles $\Delta_\varphi$ are essentially the graph of $\varphi$, and are expected to carry more information than the classical Heegner cycles.

One can define a map analogous to the classical Abel-Jacobi map for curves, but for varieties defined over $p$-adic fields. This map, denoted $\AJ_{K,p}$, assigns to a null-homologous algebraic cycle an element in the dual of the de Rham realization of the motive $\cD_{n}$. This motive has precisely been constructed so that this realization is
\[
M_{n+2}(X)\tns[\QQ_p]\Sym^{n}\Hdr^1(E/K).
\]
One can choose generators
$\omega$ and $\eta$ for
the group $\Hdr^1(E/K)$, and it thus makes sense to evaluate
$\AJ_{K,p}(\Delta_\varphi)$ on an element of the form $f\wedge
\omega^j\eta^{n-j}$. By explicitly computing this map, and combining the result with the
formula in Equation~\eqref{eq:general}, we obtain the following result (see Corollary~\ref{cor:mainresult} for a more precise and general statement):
\begin{thm*}
There exist explicit isogenies $\varphi$ and $\ol\varphi$ as above and a constant $\Omega\in K^\times$ such that for all $0\leq j\leq n$:
\[
\AJ_{K,p}(\Delta_{\varphi}-\Delta_{\ol\varphi})(f\wedge\omega^j\eta^{n-j})=\Omega^{j-n}L_p'(f,K,j+1).
\]
\end{thm*}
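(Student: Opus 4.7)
The plan is to compute the $p$-adic Abel-Jacobi image $\AJ_{K,p}(\Delta_\varphi - \Delta_{\ol\varphi})$ explicitly as a functional on the de Rham realization $M_{n+2}(X) \otimes_{\QQ_p} \Sym^n \Hdr^1(E/K)$, and then match the resulting expression against the Coleman-integral formula~\eqref{eq:general} for $L_p'(f,K,j+1)$. Because the motive $\cD_{n}$ is by construction a product of a Kuga-Sato-type variety over the Shimura curve $X$ (encoding the modular-form factor) and a self-product of the CM elliptic curve $E$ (encoding $\Sym^n \Hdr^1(E/K)$), the computation decomposes along the two factors; the cycle $\Delta_\varphi$, which is supported in a single CM fiber of $X$ and is essentially the graph of an isogeny $\varphi\colon E \to E'$, is exactly the object that couples them.

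First I would invoke the Coleman-Besser description of the $p$-adic Abel-Jacobi map: for a null-homologous cycle $Z$ of the appropriate codimension, $\AJ_{K,p}(Z)(\alpha)$ is computed by integrating a Coleman primitive $F_\alpha$ over a chain bounding $Z$. Applied to $Z = \Delta_\varphi - \Delta_{\ol\varphi}$, which is supported over the two conjugate CM points $z_0,\ol z_0$, and to $\alpha = f \wedge \omega^j \eta^{n-j}$, this splits into two steps. The fiberwise step consists in pulling back $\omega^j \eta^{n-j}$ through the graph of $\varphi$ and using the CM action on $\Hdr^1(E/K) = K\omega \oplus K\eta$: after choosing $\varphi$ so that its target CM lattice corresponds to the Heegner point $z_0$, the pullback specializes, on the fiber of $X$ parametrized by the $p$-adic upper-half plane, to the polynomial $(z-z_0)^j(z-\ol z_0)^{n-j}$, up to an overall scalar that one computes to be $\Omega^{n-j}$ for $\Omega$ the $p$-adic period of $\omega$ relative to the $K$-structure. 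The horizontal step is then a Coleman integration of $f(z)$ against this polynomial along a rigid-analytic path from $\ol z_0$ to $z_0$, which is precisely the right-hand side of~\eqref{eq:general}.

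The main obstacle will be in this second step: identifying the pair of isogenies $\varphi,\ol\varphi$ so that $\Delta_\varphi - \Delta_{\ol\varphi}$ is null-homologous after applying the idempotent cutting out $\cD_{n}$, and carefully tracking all constants in the comparisons between algebraic and rigid-analytic normalizations, so that the scalar comes out exactly as $\Omega^{j-n}$ and the modular-form factor as $f$ itself rather than some twisted version. Each of these is in principle a routine local calculation, but the bookkeeping must be done precisely in order to obtain the formula on the nose for all $j$, not only $j=n/2$, which is where the present construction essentially diverges from, and generalizes, the one in~\cite{iovita2003dpa}.
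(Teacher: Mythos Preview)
Your high-level strategy---reduce $\AJ_{K,p}$ to a Coleman integral on $\cH_p$ and match it against formula~\eqref{eq:general}---is the right one, and your fiberwise/horizontal decomposition is morally what happens. But the proposal has a genuine gap in the mechanism you invoke to compute the Abel-Jacobi map.

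You appeal to a ``Coleman--Besser description'' in which $\AJ_{K,p}(Z)(\alpha)$ is obtained by integrating a Coleman primitive over a chain bounding $Z$. That picture is adapted to varieties with good reduction. Here the Shimura curve $X$ has \emph{semistable} (not good) reduction at $p$: over $\QQ_p^{\ur}$ it is a Mumford curve, and the relevant cohomology carries a nontrivial monodromy operator $N$. The correct framework is therefore the one the paper sets up in Sections~\ref{section:filteredphiNmodules}--\ref{section:CohomologyXGamma}: the Abel-Jacobi image is an extension class in $\MF_K^{(\phi,N)}$, computed via Lemma~\ref{lemma:easylemma} by finding two splittings $s_1$ (compatible with $(\phi,N)$) and $s_2$ (compatible with filtration) and taking their difference. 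The actual passage to a Coleman integral is not ``integrate over a bounding chain'' but rather the explicit pairing formula of Proposition~\ref{prop:formula-for-pairing} combined with the Coleman-integration splitting $P_U$ of~\eqref{eq:splittings}; this is what reduces $\AJ_K(\Delta_\varphi)(\omega_f\wedge\alpha)$ to $\langle F_f(z_0')\wedge\alpha,\cl_{z_0'}(\Delta_\varphi)\rangle_{V_{n,n}}$. Without the monodromy bookkeeping, your ``horizontal step'' has no justification.

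Two smaller corrections. First, $\Delta_\varphi$ is already null-homologous on its own (because $H_p(\cD_n)$ is concentrated in degree $2n+1$); the reason one passes to $\Delta_\varphi-\Delta_{\ol\varphi}$ is to kill the undetermined constant of Coleman integration, not to achieve null-homology. Second, the constant $\Omega$ is not an abstract $p$-adic period: in the paper's normalization it is simply $z_0-\ol z_0$, and it arises from expressing $\omega_{z_0}^j\eta_{z_0}^{n-j}$ in the horizontal basis $\{u^iv^{n-i}\}$ of $(\cV_n)_{z_0}$, via the polynomial identity $\sum_i P_{i,j,n}(z_0,\ol z_0)z^i=(z-z_0)^j(z-\ol z_0)^{n-j}$. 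This horizontal-basis computation is exactly your ``fiberwise step,'' but carried out precisely rather than asserted.
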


This result is to be regarded as a $p$-adic
Gross-Zagier type formula for the anti-cyclotomic $p$-adic $L$-function. Note however that instead of heights it involves the
$p$-adic Abel-Jacobi map. It can also be seen as a generalization of
the main result of Iovita and Spie\ss{} in~\cite{iovita2003dpa} to all values
in the critical range.

This paper is structured as follows: in Section~\ref{section:RigAn} we recall concepts from rigid analytic geometry and $p$-adic integration. Section~\ref{section:ShimCurves} deals with the theory of Shimura curves, their $p$-adic uniformization, and modular forms defined on them. Section~\ref{section:filteredphiNmodules}  is a refresher of part of Fontaine's theory of filtered Frobenius monodromy modules and their relationship with semistable Galois representations. In Sections~\ref{section:HiddenStructures} and~\ref{section:CohomologyXGamma} we describe the extra structure of certain de Rham cohomology groups as done by Coleman and Iovita in~\cite{coleman2003hss}, and describe pairings in certain cases. In Section~\ref{section:LFunction} we recall the definition of the anti-cyclotomic $p$-adic $L$-function $L_p(f,K,s)$, and prove a formula for its derivative in terms of integration on the $p$-adic upper-half plane. In Section~\ref{section:Motive} we construct a Chow motive $\cD_n$ and compute its realizations. Section~\ref{section:Final} contains the main result of this paper, and Section~\ref{section:Conclusion} gives concluding remarks and points towards future directions of research.
\section{Rigid Geometry}
\label{section:RigAn}
In this section we describe the structure of rigid analytic space of the $p$-adic upper-half plane and its quotients by certain arithmetic subgroups of $\GL_2$. For more details on rigid analytic geometry the reader is invited to refer to~\cite{bosch1984naa} or \cite{fresnel2004rigid}. Here we will use the notation of this latter reference.

\subsection{The $p$-adic upper-half plane $\cH_p$ and its boundary}

Fix through this work a rational prime $p$, and denote by $\CC_p$ the topological completion of the algebraic closure of $\QQ_p$. On the projective line over $\CC_p$, denoted $\PP^1(\CC_p)$, we consider the strong G-topology, which is described as:
\begin{enumerate}
\item Every open subset $U$ of $\PP^1(\CC_p)$ is admissible for the strong G-topology, and
\item An open covering $\{U_i\}_{i\in I}$ of an open $U\subseteq\PP^1(\CC_p)$ is an admissible covering for the strong G-topology if for every affinoid $F\subseteq U$ there is a finite subset $J\subseteq I$ and affinoids $F_j\subseteq U_j$ for all $j\in J$, such that $F\subseteq U_{j\in J} F_j$.
\end{enumerate}

We proceed to define a certain analytic subspace of $\PP^1(\CC_p)$, the $p$-adic upper-half plane $\cH_p$ defined over $\QQ_p$. It can be defined as a formal scheme over $\ZZ_p$, but we are only interested in the rigid-analytic space associated to its generic fiber, which is a subset of $\PP^1(\CC_p)$, together with a  collection of affinoids that define its rigid-analytic structure. One can find more details of its construction in~\cite[Section 3]{dasgupta2008pau}.

Recall the Bruhat-Tits tree of $\GL_2(\QQ_p)$, as explained in~\cite{darmon2004rpm}.  It is a graph $\cT$ which has as set of vertices $\fV(\cT)$ the similarity classes of $\ZZ_p$-lattices in $\QQ_p^2$. Two vertices are connected by an edge whenever they have representative lattices $\Lambda_1$ and $\Lambda_2$ satisfying
\[
p\Lambda_2\subsetneq \Lambda_1\subsetneq \Lambda_2.
\]
The set of edges of $\cT$ will be denoted $\fE(\cT)$. Note that the above symmetrical relation makes $\cT$ an unoriented graph. In fact, $\cT$ is a $(p+1)$-regular tree; that is, each vertex has exactly $(p+1)$ neighbors. There is a natural action of $\PGL_2(\QQ_p)$ on $\cT$ by acting on the lattices, and this action respects the edges, yielding an action of $\PGL_2(\QQ_p)$ on $\cT$ by (continuous) graph automorphisms.

Fix an ordering of the edges of $\cT$, and denote by $\vec\fE(\cT)$ the set of ordered edges. If the ordered edge $e$ connects the vertices $v_1$ and $v_2$, we write $v_1=o(e)$ and $v_2=t(e)$. We also write $\bar e$ for the opposite edge, which has $o(\bar e)=v_2$ and $t(\bar e)=v_1$.

The Bruhat-Tits tree has a distinguished vertex, written $v_0$, which corresponds to the homothety class of the standard lattice $\ZZ_p^2$ inside $\QQ_p^2$. The edges $e$ with $o(e)=v_0$ correspond to the $(p+1)$ sublattices of index $p$ in $\ZZ_p^2$, which in turn are in bijection with the points $t$ of $\PP^1(\FF_p)=\{0,1,\ldots,p-1,\infty\}$. For such a point $t$, we denote the by $e_t\in \vec\fE(\cT)$ the corresponding edge. Since $\cT$ is locally-finite, it can be endowed with a natural topology, and it thus becomes contractible topological space. Given an edge $e\in\fE(\cT)$, we write $[e]\subset\cT$ for the closed edge (which contains the two vertices that $e$ connects), and $]e[$ for the corresponding open edge.

As a set, the space $\cH_p$ is defined as $\cH_p(\CC_p)\dfn\PP^1(\CC_p)\setminus \PP^1(\QQ_p)$. Note also that $\GL_2(\QQ_p)$ acts on $\cH_p(\CC_p)$ by fractional linear transformations. We describe a covering by basic affinoids and annuli, using the Bruhat-Tits tree defined above. Let
\[
\red\colon \PP^1(\CC_p)\to \PP^1(\Fbar_p)
\]
denote the natural map given by reduction modulo $\fm_{\cO_{\CC_p}}$, the maximal ideal of the ring of integers of $\CC_p$. Given a point $\tilde x$ in $\PP^1(\ol\FF_p)$, the \emphh{residue class}of $\tilde x$ is the subset $\red^{-1}(\{\tilde x\})$ of $\PP^1(\CC_p)$.

Define the set $A_0$ to be $\red^{-1}(\PP^1(\ol\FF_p)\setminus\PP^1(\FF_p))$. This is the prototypical example of a \emphh{standard affinoid}. Define also a collection of annuli $W_t$ for $t\in\PP^1(\FF_p)$, as:
\[
W_t\dfn \left\{\tau\in\PP^1(\CC_p)\stt \frac{1}{p}<\abs{\tau-t}<1\right\},\quad 0\leq t\leq p-1,\quad W_\infty\dfn\left\{\tau\stt 1<\abs{\tau}<p\right\}.
\]
Note that $A_0$ and the annuli $W_t$ are mutually disjoint. There is a  ``reduction map'' $r\colon \cH_p(\CC_p)\to \cT$: on the set
\begin{equation}
\label{eq:defX1}
X_1\dfn A_0\cup \left(\cup_{t\in\PP^1(\FF_p)} W_t\right)
\end{equation}
it is defined by:
\[
r(\tau)\dfn\begin{cases}
v_0&\If\tau\in A_0\\
e_t&\If\tau\in W_t,
\end{cases}
\]
where we recall that we labeled the edges with origin $v_0$ as $e_t$, with $t\in\PP^1(\FF_p)$. The map $r$ extends to all $\cH_p(\CC_p)$ by requiring it to be $\GL_2(\QQ_p)$-equivariant.

For each vertex $v\in\fV(\cT)$, let $\cA_v\dfn r^{-1}(\{v\})$. For each edge $e\in\fE(\cT)$, write $\cA_{[e]}\dfn r^{-1}([e])$ and $\cA_{]e[}\dfn r^{-1}(]e[)$. Then the collection $\{\cA_{[e]}\}_{e\in\fE(\cT)}$ gives a covering of $\cH_p(\CC_p)$ by standard affinoids, and their intersections are:
\[
\cA_{[e]}\cap\cA_{[e']}=
\begin{cases}
\emptyset&\If [e]\cap[e']=\emptyset,\\
\cA_v&\If [e]\cap [e']=\{v\}.
\end{cases}
\]
As an example, note that $\cA_{v_0}=A_0$ and that for $0\leq t\leq p-1$, the set $\cA_{[e_t]}$ is the union of two translates of $A_0$ glued along $W_t$. This covering gives the rigid-analytic space structure to $\cH_p$. Note that the nerve of this covering is precisely the Bruhat-Tits tree $\cT$.

The boundary of $\cH_p$ is the set $\PP^1(\QQ_p)$, which has been removed from $\PP^1(\CC_p)$ in order to obtain $\cH_p(\CC_p)$. An \emphh{end} of $\cT$ is an equivalence class of sequences $\{e_i\}_{i\geq 1}$ of edges $e_i\in\vec\fE(\cT)$, such that $t(e_i)=o(e_{i+1})$, and such that $t(e_{i+1})\neq o(e_i)$. Two such sequences are identified if a shift of one is the same as the other, for large enough $i$. Write $\fE_\infty(\cT)$ for the space of ends.

Choose once and for all an edge $e_0\in\vec\fE(\cT)$ such that its stabilizer inside $\PGL_2(\QQ_p)$ is the image of the unit group in the Eichler order
\[
R\dfn\left\{\smtx abcd \in M_2(\ZZ_p)\stt c\equiv 0\pmod{p}\right\}.
\]
The map $\beta\mapsto \beta\cdot e_0$ identifies $\PGL_2(\QQ_p)/\stab (e_0)$ with $\vec\fE(\cT)$. The inverse map will be written $e\mapsto \beta_e$. Also, the map
\[
N\colon\{e_i\}_i\mapsto \lim_i \beta_{e_i}(\infty)
\]
identifies $\fE_\infty(\cT)$ with $\PP^1(\QQ_p)$. In this way, the $p$-adic topology on $\PP^1(\QQ_p)$ induces a topology on $\fE_\infty(\cT)$. For an edge $e\in \vec\fE(\cT)$, write $U(e)$ for the compact open subset of $\fE_\infty(\cT)$ consisting of those ends having a representative which contains $e$. This is a basis for the topology, and we can compactify $\cT$ by adding to it its ends. Calling this completed tree $\cT^*$, we can extend $r$ to a map $r\colon\PP^1(\CC_p)\to \cT^*$.

\subsection{Quotients of \texorpdfstring{$\cH_p$}{Hp} by arithmetic subgroups}
In \cite{gerritzen1983schottky} one can find the general theory of Schottky groups, which are those groups that lead to manageable quotients of $\cH_p$. We will restrict our attention to a very special class of those groups, which are related to the $p$-adic uniformization of Shimura curves.

Let $B$ be a definite rational quaternion algebra of discriminant $N^-$ coprime to $p$. Fix an Eichler $\ZZ[\frac 1p]$-order $R$ of level $N^+$ in $B$, and fix an isomorphism $B\tns[\QQ]\QQ_p\cong M_2(\QQ_p)$. Let $\Gamma$ be the group of elements of reduced norm $1$ in $R$.

 The group $\Gamma$ is a discrete cocompact subgroup of $\SL_2(\QQ_p)$, as is shown in~\cite[Proposition~9.3]{shimura1994iat}. Suppose for simplicity that $\Gamma$ contains no elliptic points, and consider the topological quotient $\pi\colon \cH_p\to X_\Gamma\dfn\Gamma\backslash\cH_p$. Since $\Gamma$ is discrete, the space $X_\Gamma$ can be given a structure of rigid-analytic space in a way so that $\pi$ is a morphism of rigid-analytic spaces. An admissible covering is indexed by the quotient graph $\Gamma\backslash \cT$, in the same way that was done for $\cH_p$. In this way one obtains a complete curve called a \emphh{Mumford-Shottky curve}. In Section~\ref{section:ShimCurves} we will see how these curves are related to Shimura curves.

\subsection{\texorpdfstring{$p$}{p}-adic integration}
\label{sec:padicint}
The theory of $p$-adic integration was constructed initially by Coleman in \cite{coleman1989rlc}, \cite{coleman1985tpc} and \cite{coleman1982dra}, and further developed by Coleman-Iovita in \cite{coleman2003hss}, and by de Shalit \cite{deshalit1989eca}, among others. We will borrow very little from this vast theory, in order to cover only those concepts that are needed later.

Concretely, we will construct an integration theory on those rigid spaces which allow a covering by certain type of open subsets of $\PP^1(\CC_p)$. The $p$-adic upper-half plane $\cH_p$ admits such a covering, and hence we will obtain a theory of integration on $\cH_p$ and on Mumford-Schottky curves.

We will also be interested in the integration of general vector bundles over the curves above. It turns out, however, that the bundles that we will encounter in this work have a basis of horizontal sections, and therefore one can integrate component-wise, thus reducing to integration with trivial coefficients.

\begin{df}
\label{df:wideopen}
  A \emphh{wide open} is a set of the form
\[
U\dfn \{z\in\PP^1(\CC_p)\stt \abs{f(z)}<e_f,f\in S\},
\]
where $S$ a finite set of rational functions over $\CC_p$ containing at least one non-constant function, and $e_f\in\{1,\infty\}$ for all $f$.
\end{df}

For example, the open balls $B(a,r)$, the open annuli $\cA_{]e[}$ and the set $X_1$ as in Equation~\eqref{eq:defX1} are all wide open sets.

If $X\subseteq\PP^1(\CC_p)$ is an affinoid and $U\supset X$ is a wide open such that the complement $U\setminus X$ is a disjoint union of annuli, we say that $U$ is a  \emphh{wide open neighborhood} of $X$. A \emphh{basic wide open} is a set $C$ of the form:
\[
C=\AA^1(\CC_p)\setminus \bigcup_{a\in S\cup\{\infty\}} B[a,r_a],
\]
where $S$ is a finite subset of $\BB^1$ no two elements of which are contained in the same residue class, and for each $a\in S\cup\{\infty\}$ the radius $r_a\in \abs{\CC_p}$ satisfies $\abs{r_a}<1$. It can also be written as the disjoint union of a connected affinoid $X$ (which is a full subspace of $\BB^1$), and of $|S|+1$ wide open annuli $V_a$:
\[
X=\BB^1\setminus \bigcup_{a\in S} B(a,1),
\]
\[
V_a=A(a,r_a,1),\quad a\in S\cup\{\infty\}.
\]
Note that the set $X_1$ appearing in Equation~\eqref{eq:defX1} is an example of a basic wide open, with $X(X_1)=A_0$.

A locally analytic homomorphism $l\colon\CC_p^\times\to\CC_p^+$ such that $\frac{d}{dz}l(1)=1$ is called a \emphh{branch of the logarithm}. It can be easily shown that $l(z)$ is analytic on $B(x,|x|)$, for all $x\in\CC_p^\times$.

Given any open subset $X$ of $\PP^1(\CC_p)$, we will denote by $\cO(X)$ (resp. $\cL(X)$) the ring of rigid analytic functions (resp. of locally analytic functions) on $X$. Coleman defines in~\cite{coleman1982dra} the notion of logarithmic $F$-crystal on $C$ for any basic wide open $C$. This is a certain type of $\cO(C)$-submodule of $\cL(C)$ satisfying several technical conditions (see \cite[page 184, conditions A-F]{coleman1982dra}). One of these conditions is the uniqueness property: if $M$ is a logarithmic $F$-crystal on $C$, then any element $f\in M$ that vanishes in a non-empty open subset of $C$ must be zero. This notion includes as basic example the ring $\cO(C)$ itself.

For the following result, one needs to define $\cO_{\Log}$ and $\Omega_M$, whose definition can be found in page 177 and page 179 of~\cite{coleman1982dra}. Let $\phi$ be a Frobenius morphism on $X$. Coleman proves the following result:

\begin{lemma}
Let $M$ be a logarithmic $F$-crystal on a basic wide open space $C$, and let $\omega$ be an element of $\Omega_M(C)$. There exists a locally-analytic function $F_\omega\in\cL(C)$, unique up to an additive constant, which satisfies:
  \begin{enumerate}
  \item $dF_\omega=\omega$,
  \item There is a wide open neighborhood $V$ of $C$ such that $\phi^*F_\omega-bF_\omega\in M(V)$, for some $b\in\CC_p$ which is not a root of unity, and
  \item The restriction of $F_\omega$ to the underlying affinoid $X$ is analytic in each residue class of $X$, and the restriction to $V_a$ is in $\cO_\Log(V_a)$ for all $a\in S$.
  \end{enumerate}
\end{lemma}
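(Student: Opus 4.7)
The plan is to build $F_\omega$ locally on the two kinds of pieces of a basic wide open, piece together the local primitives, and then use the Frobenius functional equation together with the non-root-of-unity hypothesis on $b$ to pin down the constants.

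First, I would treat existence on each piece of the natural decomposition $C=X\sqcup\bigsqcup_{a\in S\cup\{\infty\}}V_a$. On the underlying affinoid $X\subset\BB^1$, each residue class is an open disk, so any element of $\Omega(X)$ admits a primitive analytic on every residue class by termwise integration of power series — this immediately delivers the first clause of condition (3), with the ambiguity being an additive constant per residue class. On each annulus $V_a=A(a,r_a,1)$, expand the restriction of $\omega$ as $\sum_{n\in\ZZ}c_n(z-a)^n\,dz$; integrate all terms with $n\neq-1$ formally, and integrate the $c_{-1}$ term using the chosen branch of logarithm $l$, producing a function in $\cO_\Log(V_a)$. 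This verifies condition (1) locally and the second clause of (3), again up to locally analytic additive constants on each connected piece. Gluing these gives a candidate $F_\omega\in\cL(C)$ with $dF_\omega=\omega$, determined up to an element of the space $\cC(C)$ of locally constant functions on $C$ (one constant per residue class of $X$ and per $V_a$).

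Next, I would use the Frobenius structure on $M$ to fix the constants. The Frobenius $\phi$ on a wide open neighborhood $V$ of $C$ induces $\phi^*\colon\Omega_M(V)\to\Omega_M(V)$, and the crystal structure provides an action with eigenvalues on the relevant cohomology. Applying $d$ to $\phi^*F_\omega-bF_\omega$ gives $\phi^*\omega-b\omega$; by the crystal axioms one arranges that $\phi^*\omega-b\omega$ is itself the differential of some element of $M(V)$ (this is really a statement about the existence of $b$ as an eigenvalue of Frobenius on the appropriate quotient). Hence, for \emph{some} choice of additive constants, $\phi^*F_\omega-bF_\omega\in M(V)$. The question is whether such a choice exists: the obstruction lies in $\cC(C)/(\phi^*-b)\cC(C)$, and since $\phi$ permutes the connected pieces with $b$ not a root of unity, the operator $\phi^*-b$ is invertible on this finite-dimensional space. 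This produces the required $F_\omega$ satisfying all three conditions.

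For uniqueness, let $F_\omega'$ be a second solution and set $G\dfn F_\omega-F_\omega'$. Then $dG=0$, so $G\in\cC(C)$, and $\phi^*G-bG\in M(V)$ is a locally constant element of $M(V)$. Since $\phi$ cycles the connected components of $C$ and $b$ is not a root of unity, the only locally constant solution of $\phi^*G=bG+m$ with $m\in M(V)$ locally constant comes from an honest constant, and the uniqueness property of the $F$-crystal $M$ (the axiom that elements vanishing on a nonempty open must vanish) forces $G$ to be a global constant.

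The main obstacle I expect is the second step: matching the local primitives on $X$ and the $V_a$ by means of the Frobenius equivariance. The existence of the correct scalar $b$ and the invertibility of $\phi^*-b$ on the space of locally constant functions depend on a careful analysis of how $\phi$ acts on the combinatorics of the residue classes and annuli making up $C$, and this is precisely where the hypothesis ``$b$ is not a root of unity'' does its work. Everything else — existence of formal primitives, the $\cO_\Log$ behavior on annuli, and uniqueness via the crystal axiom — is essentially bookkeeping once this Frobenius-theoretic normalization is in place.
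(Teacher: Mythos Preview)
The paper does not give its own proof of this lemma: it is stated as a result of Coleman and is implicitly referred to~\cite{coleman1982dra} (the surrounding text says ``Coleman proves the following result''), so there is no argument in the paper to compare your proposal against.

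That said, your outline is in the spirit of Coleman's actual argument (Dwork's trick): one produces a naive primitive locally, observes that $\phi^*\omega-b\omega$ is already exact in $M$ by the $F$-crystal structure, and then uses that $b$ is not a root of unity to invert $\phi^*-b$ on the space of locally constant functions and thereby normalize the additive ambiguities. Two points in your sketch would need tightening if you were to make it a proof rather than a plan. First, $\omega$ lies in $\Omega_M(C)$, not in $\Omega^1(C)$; your description of ``termwise integration of power series'' and ``expand \dots as $\sum_n c_n(z-a)^n\,dz$'' treats the coefficients as scalars, whereas in general they live in $M$, and the existence of local primitives with values in the correct module requires the axioms of a logarithmic $F$-crystal, not just formal calculus. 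Second, the assertion ``by the crystal axioms one arranges that $\phi^*\omega-b\omega$ is itself the differential of some element of $M(V)$'' is the crux of the whole argument and deserves more than a parenthetical: this is exactly where the Frobenius structure on $M$ (and the specific choice of $b$) enters, and Coleman spends real effort on it. Your uniqueness argument via the vanishing axiom for $F$-crystals is correct in outline.
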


Given a logarithmic $F$-crystal $M$, one can then define an $\cO(C)$-module $M'$ as follows:
\[
M'\dfn M+\sum_{\omega\in\Omega_M(C)} F_\omega\cO(C).
\]

This is the unique minimal logarithmic $F$-crystal on $C$ which contains $M$ and such that $dM'\supseteq \Omega_M(C)$. In particular since $\cO(C)$ is a logarithmic $F$-crystal, one can define $A^1(C)$ to be $\cO(C)'$, and obtain:
\begin{thm}[(Coleman)]
\label{thm:coleman-integral}
  Let $\omega\in\Omega^1(C)$. There exists a unique (up to constants) function $F_\omega\in A^1(C)$, such that $dF_\omega=\omega$.
\end{thm}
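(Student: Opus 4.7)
\medskip
\noindent\textbf{Plan.} I would prove the theorem in two steps, existence and uniqueness, leaning on the preceding Lemma and on the uniqueness axiom built into the definition of a logarithmic $F$-crystal.

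\medskip
\noindent\textbf{Existence.} As remarked just before the Lemma, $M \dfn \cO(C)$ is itself a logarithmic $F$-crystal on $C$, so $\Omega_M(C) = \Omega^1(C)$. Given $\omega \in \Omega^1(C)$, the preceding Lemma produces a locally analytic function $F_\omega \in \cL(C)$ with $dF_\omega = \omega$, together with the Frobenius-equivariance property $\phi^* F_\omega - b F_\omega \in \cO(V)$ on a wide-open neighborhood $V$ of $C$ (for some $b$ not a root of unity) and the residue-class analyticity. By the very definition $A^1(C) = \cO(C)' = \cO(C) + \sum_{\omega' \in \Omega^1(C)} F_{\omega'}\cO(C)$, this $F_\omega$ lies in $A^1(C)$, yielding existence.

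\medskip
\noindent\textbf{Uniqueness.} Suppose $F, G \in A^1(C)$ both satisfy $dF = dG = \omega$, and set $H \dfn F - G \in A^1(C)$, so $dH = 0$. Since $H \in \cL(C)$ is locally analytic, the vanishing of $dH$ forces $H$ to be locally constant. Pick any point $x_0 \in C$ on which $H$ is analytic in an open neighborhood, and set $c \dfn H(x_0) \in \CC_p$. As constants belong to $\cO(C) \subset A^1(C)$, the difference $H - c$ lies in $A^1(C)$ and vanishes on a nonempty open subset of $C$. The uniqueness property of a logarithmic $F$-crystal quoted in the excerpt — any element of such a crystal vanishing on a nonempty open subset of $C$ must be zero — then forces $H - c \equiv 0$, so $H$ is the global constant $c$, as required.

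\medskip
\noindent\textbf{Main obstacle.} The existence part is essentially a repackaging of the Lemma, so the real content is uniqueness. The subtle point is that in the $p$-adic setting $dH = 0$ only gives local constancy: the totally disconnected nature of $\CC_p$ a priori allows $H$ to take different values on different residue classes of the underlying affinoid and on the annuli $V_a$. What rescues the argument is precisely the uniqueness axiom imposed on logarithmic $F$-crystals, which in turn reflects the rigidity of Frobenius-equivariant primitives (this is where the hypothesis that $b$ is not a root of unity in the preceding Lemma enters tacitly). Tracing that rigidity through the definition of $A^1(C)$ is the heart of the proof.
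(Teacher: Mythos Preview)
The paper does not give its own proof of this theorem: it simply cites \cite[Theorem~5.1]{coleman1982dra}. Your proposal is a correct reconstruction of the argument from the ingredients the paper has set up --- existence from the preceding Lemma together with the definition $A^1(C)=\cO(C)'$, and uniqueness from the fact (stated just above the theorem) that $M'$, hence $A^1(C)$, is again a logarithmic $F$-crystal and therefore enjoys the uniqueness property. There is nothing to compare against; your write-up is more detailed than what the paper provides.
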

\begin{proof}
See \cite[Theorem~5.1]{coleman1982dra}.
\end{proof}

Let $Y$ be a rigid-analytic space which can be covered by a family $\cC$ of basic wide opens, which intersect at basic wide opens, and such that the nerve of the covering is simply connected. Let $\cA^1$ be the sheaf of $\cO_Y$-modules defined by $\cA^1(U)\dfn A^1(U)$ for each $U\in\cC$. The following corollary is an easy consequence of the results stated so far:

\begin{cor}
There is a short exact sequence:
\[
0\to \CC_p\to H^0(Y,\cA^1)\tto{d} H^0(Y,\cA^1)\tns[\cO_Y(Y)]\Omega^1(Y)\to 0
\]
\end{cor}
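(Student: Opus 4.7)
The plan is to deduce the corollary from Theorem~\ref{thm:coleman-integral}, applied on each basic wide open $U \in \cC$, and then glue the resulting local primitives via a \v{C}ech argument that exploits the simple connectedness of the nerve of $\cC$.

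First, for exactness at $\CC_p$, the inclusion of constants is manifestly injective because $Y$ is nonempty. To check $\ker d = \CC_p$, I would take $F \in H^0(Y,\cA^1)$ with $dF = 0$ and restrict to each $U \in \cC$. The uniqueness statement in Theorem~\ref{thm:coleman-integral} (applied to $\omega = 0$), together with the uniqueness property of the logarithmic $F$-crystal $A^1(U)$, forces $F|_U$ to be a constant $c_U \in \CC_p$. For any two overlapping $U, U' \in \cC$ one has $c_U = c_{U'}$ by restriction to $U \cap U'$, and since the nerve of $\cC$ is (simply) connected, all the $c_U$ coincide with a single element of $\CC_p$.

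For surjectivity of $d$, I would take $\omega \in H^0(Y, \cA^1)\tns[\cO_Y(Y)]\Omega^1(Y)$ and, by restriction, regard $\omega|_U$ as an element of $\Omega_{A^1}(U)$ for each $U \in \cC$. Theorem~\ref{thm:coleman-integral} then produces $F_U \in A^1(U)$ with $dF_U = \omega|_U$, unique up to an additive constant in $\CC_p$. On each nonempty double overlap the difference $F_U - F_{U'}$ is $d$-closed and hence, by the same uniqueness argument as above, equal to a constant $c_{U,U'} \in \CC_p$. These constants define a \v{C}ech $1$-cocycle on the nerve of $\cC$ with values in the constant abelian group $\CC_p$.

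The crux of the argument is then the vanishing of $\check H^1$ of the nerve with $\CC_p$-coefficients, which is where the simple connectedness hypothesis enters. Because the nerve is simply connected, the cocycle $(c_{U,U'})$ is a coboundary: there exist $c_U \in \CC_p$ with $c_{U,U'} = c_U - c_{U'}$. Replacing each $F_U$ by $F_U - c_U$ produces local primitives that agree on overlaps, which therefore glue to a global section $F \in H^0(Y,\cA^1)$ with $dF = \omega$. The main obstacle is really bookkeeping: one must verify the cocycle condition on triple overlaps (which reduces to the uniqueness of primitives up to a constant on the basic wide opens $U \cap U' \cap U''$) and confirm that the combinatorial hypothesis on the nerve really delivers the required vanishing $\check H^1(\text{nerve}(\cC), \CC_p) = 0$, so that the obstruction to gluing genuinely disappears.
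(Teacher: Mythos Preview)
Your argument is correct and is precisely the standard gluing argument the paper has in mind; the paper itself gives no proof beyond calling the corollary ``an easy consequence of the results stated so far,'' and your local-primitive-plus-\v{C}ech-cocycle approach is that easy consequence. One small remark: when you invoke Theorem~\ref{thm:coleman-integral} for the restriction $\omega|_U$, that theorem as stated covers $\omega \in \Omega^1(U)$, whereas a general element of $H^0(Y,\cA^1)\otimes_{\cO_Y(Y)}\Omega^1(Y)$ restricts to $\Omega_{A^1}(U)$; for those you should appeal to the preceding lemma on logarithmic $F$-crystals (with $M=A^1(U)$) rather than the theorem, but the structure of your argument is unchanged.
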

\section{Shimura Curves}
\label{section:ShimCurves}
In this section we introduce the different ways in which Shimura curves appear in this work.
\subsection{Moduli spaces}
\label{subsection:shimuracurves}
A good exposition of the theory of Shimura curves and their $p$-adic uniformization can be found in~\cite{boutot1991upa}. Here we just recall the basic facts.

Fix for the rest of the paper an integer $N$ which can be factored as $N=pN^-N^+$, where $p$ is a prime which will remain fixed, $N^-$ is a positive squarefree integer with an odd number of prime divisors none of which equals $p$, and $N^+$ is a positive integer relatively prime to $pN^-$. Let $\cB$ be the indefinite rational quaternion algebra of discriminant $pN^-$. Fix a maximal order $\Rmax$ in $\cB$, and an Eichler order $\cR$ of level $N^+$ contained in $\Rmax$.

\begin{df}
Let $S$ be a $\QQ$-scheme. An \emphh{abelian surface with quaternionic multiplication} (by $\Rmax$) and level $N^+$-structure over $S$ is a triple $(A,i,G)$ where
\begin{enumerate}
\item $A$ is a (principally polarized) abelian scheme over $S$ of relative dimension $2$;
\item $i\colon \Rmax\injects \End_S(A)$ is a a ring homomorphism;
\item $G$ is a subgroup scheme of $A$ which is \'etale-locally isomorphic to $(\ZZ/N^+\ZZ)^2$ and is stable and locally cyclic under the action of $\cR$.
\end{enumerate}
When no confusion may arise, such a triple will be called an \emphh{abelian surface with QM}.
\end{df}

\begin{df}
\label{df:ShimuraCurve}
The \emphh{Shimura curve} $X\dfn X_{N^+,pN^-}/\QQ$ is the \emph{coarse} moduli scheme representing the moduli problem over $\QQ$:
\[
S\mapsto\{\text{ isomorphism classes of abelian surfaces with QM over $S$ }\}.
\]
\end{df}

\begin{prop}[Drinfel'd]
The Shimura curve $X_{N^+,pN^-}$ is a smooth, projective and geometrically connected curve over $\QQ$.
\end{prop}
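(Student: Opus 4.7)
The plan is to establish the three assertions—smoothness, projectivity, and geometric connectedness over $\QQ$—by combining the complex uniformization of $X(\CC)$ with the theory of canonical models. First I would fix an isomorphism $\cB \otimes_\QQ \RR \cong M_2(\RR)$, which exists because $\cB$ is indefinite, and let $\Gamma$ denote the image of $\{\alpha \in \cR^\times : \nrd(\alpha) = 1\}$ in $\SL_2(\RR)$. Since $N^-$ has an odd number of prime divisors, $pN^-$ has an even number, so $\cB$ is a division algebra; by the same sort of argument as in \cite[Prop.~9.3]{shimura1994iat}, $\Gamma$ is then a discrete cocompact subgroup of $\SL_2(\RR)$, with at most finitely many elliptic points.

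Next I would identify the complex-analytic coarse moduli space $X(\CC)$ with the compact quotient $\Gamma \backslash \cH$, via the standard recipe that sends an abelian surface with QM over $\CC$ to the $\Gamma$-orbit of the complex structure on $\cB \otimes_\QQ \RR$ defining its Hodge decomposition, the level-$N^+$ datum being encoded in the choice of Eichler order $\cR \subset \Rmax$. Since $\Gamma$ acts properly discontinuously and cocompactly, the quotient is a compact Riemann surface; the only subtlety is smoothness at elliptic fixed points, but there the local coarse quotient $\CC[[z]]^{\mu_n} = \CC[[z^n]]$ is regular, so the whole quotient is smooth. By GAGA it is algebraic and projective, and it is connected because $\cH$ is.

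To descend from $\CC$ to $\QQ$, I would invoke the theory of canonical models due to Shimura and Deligne. The Shimura datum attached to $\cB$ with the Eichler-order level structure has reflex field $\QQ$, yielding a canonical model of $\Gamma\backslash\cH$ over $\QQ$ that coincides with the coarse moduli scheme $X$ of Definition~\ref{df:ShimuraCurve}. Smoothness and projectivity then descend from $X \otimes_\QQ \CC$ by faithfully flat descent. For geometric connectedness, the adelic description of $X \otimes_\QQ \Qbar$ puts its connected components in bijection with the double coset $\cB^\times_+ \backslash \widehat{\cB}^\times / \widehat{\cR}^\times$; because $\cB$ is indefinite, strong approximation for $\SL_1(\cB)$, combined with the surjectivity of $\nrd$ from $\cB^\times_+$ onto $\QQ_{>0}^\times$ and from $\widehat{\cR}^\times$ onto $\widehat{\ZZ}^\times$, forces this double coset to be a singleton.

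The main obstacle is the geometric connectedness step. Smoothness and projectivity fall out of the complex-analytic picture essentially for free once the canonical-model descent is in hand, whereas connectedness over $\Qbar$ is a class-number-one statement for $\cR$ that requires the adelic reformulation and strong approximation. This also explains the decision to take $\Gamma$ to be the group of reduced-norm-one units rather than units of totally positive norm: with this choice, the single complex-analytic quotient $\Gamma \backslash \cH$ is already an entire geometric fiber rather than a union of Galois-conjugate components that would have to be glued after descent.
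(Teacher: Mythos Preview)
The paper does not actually supply a proof of this proposition: it simply refers the reader to \cite[Section~III]{boutot1991upa}. Your proposal, by contrast, gives a genuine sketch of the standard argument, and the outline is correct. The complex uniformization $X(\CC)\cong\Gamma\backslash\cH$ for the norm-one units of the Eichler order, together with GAGA, yields smoothness and projectivity; the theory of canonical models descends this to $\QQ$; and strong approximation for $\SL_1(\cB)$ (valid because $\cB$ is indefinite) together with the triviality of the narrow class group of $\QQ$ gives that the relevant adelic double coset is a single point, hence geometric connectedness.

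Two small comments. First, your parenthetical that smoothness at elliptic points follows from $\CC[[z]]^{\mu_n}=\CC[[z^n]]$ is the right observation for the \emph{coarse} moduli space, which is what is being claimed here; it would fail for the stack. Second, the identification of the coarse moduli scheme of Definition~\ref{df:ShimuraCurve} with the canonical model is not entirely formal and is part of what the cited reference (and the original work of Shimura and Deligne) establishes; you are right to flag this as the descent step, but in a fully written-out proof one would want to say a word about why the two $\QQ$-schemes agree, not just that both exist.
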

\begin{proof}
  See~\cite[Section III]{boutot1991upa}.
\end{proof}

We will need to work with a Shimura curve which is actually a fine moduli space. For that, we need to rigidify the moduli problem, as follows.

\begin{df}
Let $M\geq 3$ be an integer relatively prime to $N$. Let $S$ be a $\QQ$-scheme. An \emphh{abelian surface with QM and full level $M$-structure} (QM by $\Rmax$ and level $N^+$-structure is understood) is a quadruple $(A,i,G,\overline\nu)$ where $(A,i,G)$ is as before, and $\overline\nu\colon (\Rmax/M\Rmax)_S\to A[M]$ is a $\Rmax$-equivariant isomorphism from the constant group scheme $(\Rmax/M\Rmax)_S$ to the group scheme of $M$-division points of $A$.
\end{df}

\begin{df}
The Shimura curve $X_M=X_{N^+,pN^-,M}$ is defined to be the fine moduli scheme classifying the abelian surfaces with QM and full level $M$-structure.
\end{df}

\begin{rmk}
The curve $X_M$ is still smooth and projective over $\QQ$. However, it is not geometrically-connected. In fact, as we will see below, it is the disjoint union of $\#(\ZZ/M\ZZ)^\times$ components.
\end{rmk}

Forgetting the level $M$-structure yields a Galois covering $X_M\to X$ with Galois group
\[
(\Rmax/M\Rmax)^\times/\{\pm 1\}\cong \gl_2(\ZZ/M\ZZ)/\{\pm 1\}.
\]

\subsection[Heegner points]{Heegner points}
\label{subsection:HeegnerPoints}
Let $F$ be a field of characteristic zero.

\begin{df}
An abelian surface $A$ defined over  $F$ (with $i\colon \Rmax\injects \End_F(A)$ and
level-$N$ structure) is said to have \emphh{complex multiplication}
 (CM) if $\End_{\Rmax}(A)\neq \ZZ$.  In that case, 
$\cO\dfn \End_{\Rmax}(A)$ is an order in an imaginary quadratic
number field $K$, and one says that $A$ has CM by $\cO$.
\end{df}

\begin{df}
  A point on the Shimura curve $X_M$ is called a \emphh{Heegner point} if it
can be represented by a quadruple $(A,i,G,\ol\nu)$ such that $A$ has
complex-multiplication by $\cO$ and $G$ is $\cO$-stable. If we drop
the condition of $G$ being $\cO$-stable, then we call it a \emphh{CM
point}.
\end{df}

\begin{rmk}
\label{rmk:aisee} Suppose that $A$ has QM by $\Rmax$ and CM by $\cO_K$. Suppose that $\cO_K$ splits $\Rmax$. Then:
\[
\End_F(A)\cong \cO_K\tns \Rmax\cong M_2(\cO_K).
\]
By $\End_F(A)$ we mean the endomorphisms of $A$ as an algebraic variety over $F$. Fixing an isomorphism $\End(A)\cong M_2(\cO_K)$ yields an
isomorphism $A\cong E\times E$, where $E$ is an elliptic curve defined over
$H$, the Hilbert class field of $F$, with $\End_H(E)\cong \cO_K$. Explicitly, one can obtain each of the two copies of $E$ by applying to $A$ the endomorphism corresponding to the matrices
\[
\mtx 1000 \quad\text{ and } \mtx 0001.
\]
In
particular, $E$ is an elliptic curve with complex multiplication.
\end{rmk}

\subsection{\texorpdfstring{$p$}{p}-adic uniformization}
\label{subsection:padicuniformization}
We will use a uniformization result due to \v{C}erednik and Drinfel'd, which gives an explicit realization of the Shimura curves $X$ and $X_M$ as quotients of the $p$-adic upper-half plane. Let $B$ be the \emph{definite} rational quaternion algebra of discriminant $N^-$, and let $R$ be an Eichler $\ZZ[\frac 1 p]$-order of level $N^+$ in $B$. Define the group
\[
\Gamma\dfn\{ x\in R^\times \stt \nrd(x)=1\}.
\]
Fix an isomorphism
\[
\iota_p\colon B_p=B\tns[\QQ]\QQ_p\tto{\sim} M_2(\QQ_p).
\]

\begin{prop}
The isomorphism $\iota_p$ identifies the group $\Gamma$ with a discrete co-compact subgroup of $\SL_2(\QQ_p)$.
\end{prop}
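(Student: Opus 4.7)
The plan is to deduce both statements via standard adelic arguments applied to the algebraic group $G=\SL_1(B)$ of reduced-norm-$1$ elements. Since $B$ is definite, $G(\RR)$ is compact, and since $p\nmid N^-$ the algebra $B$ splits at $p$, so $\iota_p$ identifies $G(\QQ_p)$ with $\SL_2(\QQ_p)$. For each prime $\ell\neq p$, let $R_\ell := R\otimes_{\ZZ[1/p]}\ZZ_\ell$ and let $R_\ell^1\subset R_\ell^\times$ be its subgroup of reduced-norm-$1$ units; set $U^p := \prod_{\ell\neq p} R_\ell^1$, a compact open subgroup of $G(\mathbf{A}_f^p)$. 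The $\ZZ[1/p]$-order definition of $R$ gives
\[
\Gamma \;=\; G(\QQ)\cap \bigl(G(\QQ_p)\times U^p\times G(\RR)\bigr),
\]
the intersection being taken inside $G(\mathbf{A})$ via the diagonal embedding.

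For discreteness, $G(\QQ)$ is discrete in $G(\mathbf{A})$, so its intersection with the open subset $G(\QQ_p)\times U^p\times G(\RR)$ is discrete there; projection onto the first factor $G(\QQ_p)$ then preserves discreteness because the kernel $\{1\}\times U^p\times G(\RR)$ of the projection is compact. Hence $\iota_p(\Gamma)$ is discrete in $\SL_2(\QQ_p)$.

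For cocompactness I would invoke two classical ingredients. Since $B$ is a division algebra (a consequence of definiteness), $G$ is $\QQ$-anisotropic and the adelic quotient $G(\QQ)\backslash G(\mathbf{A})$ is compact. Moreover, $G$ is simply connected and $G(\QQ_p)$ is noncompact, so strong approximation for $G$ with respect to the place $p$ applies, giving
\[
G(\mathbf{A}) \;=\; G(\QQ)\cdot \bigl(G(\QQ_p)\times U^p\times G(\RR)\bigr).
\]
Combining these, the map $g\mapsto [(g,1,1)]$ descends to a homeomorphism
\[
\Gamma\backslash \SL_2(\QQ_p)\;\xrightarrow{\ \sim\ }\; G(\QQ)\backslash G(\mathbf{A})\bigm/ \bigl(U^p\times G(\RR)\bigr),
\]
and the target is compact as a continuous image of $G(\QQ)\backslash G(\mathbf{A})$.

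The main technical point is to verify that the displayed map above is actually a homeomorphism, i.e.\ that the $\Gamma$-orbits on $\SL_2(\QQ_p)$ are exactly the fibres of $g\mapsto [(g,1,1)]$; this is a routine check that follows at once from the intersection description of $\Gamma$. Beyond this bookkeeping, the real content reduces entirely to citing the compactness of $G(\QQ)\backslash G(\mathbf{A})$ (anisotropy of $G$) and strong approximation at $p$, both packaged for this setting in \cite[Proposition~9.3]{shimura1994iat}, to which the paper has already appealed earlier.
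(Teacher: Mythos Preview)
Your argument is correct and follows the standard adelic route: discreteness from discreteness of $G(\QQ)$ in $G(\mathbf{A})$ plus compactness of the fibres of the projection to $G(\QQ_p)$, and cocompactness from anisotropy of $G=\SL_1(B)$ together with strong approximation at $p$. The paper's own proof consists solely of the citation ``See~\cite[Proposition~9.3]{shimura1994iat}'', so you have simply unpacked the content of the reference the paper invokes; there is no difference in approach, only in level of detail.
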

\begin{proof}
  See~\cite[Proposition~9.3]{shimura1994iat}.
\end{proof}

The previous proposition makes it possible to consider the quotient $X_\Gamma\dfn \Gamma\backslash\cH_p$. The celebrated result of \v{C}erednik-Drinfel'd gives a deep relationship of the Shimura curves $X$ and $X_M$ defined above, with $X_\Gamma$.

\begin{thm}[(\v{C}erednik-Drinfel'd)]
\label{thm:cer-drin}
There is an isomorphism of rigid-analytic varieties:
\[
(X_{\QQ_p^\ur})^{\text{an}}\cong X_{\Gamma}\dfn \Gamma\backslash \cH_p.
\]
Moreover, for any integer $M\geq 3$, let $\Gamma_M$ be the subgroup of units of reduced norm congruent to $1$ modulo $M$. There is an isomorphism of rigid-analytic varieties:
\[
(X_M)^\text{an}_{\QQ_p^\text{ur}}\cong \Gamma\backslash \left(\cH_p\times (R/MR)^\times\right)\cong \coprod_{(\ZZ/M\ZZ)^\times} \Gamma_M\backslash \cH_p,
\]
which exhibits $X_M$ as a disjoint union of Mumford curves, and hence it is semistable.
\end{thm}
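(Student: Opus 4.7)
The plan is to follow Drinfel'd's moduli-theoretic approach, passing through an integral model of the Shimura curve and its supersingular special fiber. I would begin by introducing an integral model $\fX$ of $X$ over $\ZZ_{p^2}^{\ur}$ that parametrizes abelian surfaces with QM by $\Rmax$ and level-$N^+$ structure. The crucial initial observation is that because $\cB$ is ramified at $p$, the induced $\Rmax$-action on the $p$-divisible group of any geometric point of the special fiber $\fX_{\Fbar_p}$ forces that point to correspond to a supersingular abelian surface. Hence the special fiber is everywhere supersingular, and the formal completion of $\fX$ along it controls the entire $p$-adic rigid geometry of $X$.

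Next I would enumerate the supersingular points. By an argument going back to Deuring in the elliptic case, all supersingular abelian surfaces with QM by $\Rmax$ are isogenous, and the set of isomorphism classes (with level-$N^+$ structure) is in bijection with a double coset of the form $B^\times\backslash\widehat{B}^\times/\widehat{R}^\times$, where $B$ is the \emph{definite} quaternion algebra obtained from $\cB$ by swapping the invariants at $p$ and $\infty$. This is precisely where $B$ and its Eichler $\ZZ[\tfrac{1}{p}]$-order $R$, and hence the group $\Gamma$, enter the picture.

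The heart of the proof is Drinfel'd's representability theorem: there is a formal scheme $\widehat{\Omega}$ over $\Spf \ZZ_p$ representing the moduli problem of special formal $\cO_{\cB_p}$-modules, whose base change to $\Spf \ZZ_{p^2}^{\ur}$ has rigid generic fiber isomorphic to $\cH_p \times \ZZ$, the $\ZZ$-factor recording the height of the universal quasi-isogeny. Using Serre-Tate theory, the formal completion of $\fX_{\ZZ_{p^2}^{\ur}}$ along its special fiber is identified with the quotient of $\widehat{\Omega} \times \widehat{B}^\times / \widehat{R}^\times$ by the diagonal left action of $B^\times$. Passing to rigid generic fibers and invoking strong approximation for the norm-one form of $B$ (which holds because $B$ splits at $p$) collapses the double coset, and one is left with the desired uniformization $X_{\QQ_p^{\ur}}^{\mathrm{an}} \cong \Gamma\backslash\cH_p$. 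For the statement about $X_M$, a full level-$M$ structure is prime-to-$p$ data adding an $\Rmax$-equivariant trivialization of $A[M]$; the reduced norm map composed with reduction modulo $M$ then distinguishes the $(\ZZ/M\ZZ)^\times$ components, each isomorphic to $\Gamma_M\backslash\cH_p$.

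The main obstacle will be Drinfel'd's representability theorem itself, a deep result requiring delicate analysis of the functor of points of $\widehat{\Omega}$ and its explicit identification with the formal upper half plane via the construction of a universal special formal module. A secondary but nontrivial difficulty is the cocycle bookkeeping needed to convert the formal double-coset quotient into the concrete arithmetic quotient $\Gamma\backslash\cH_p$, which depends crucially on strong approximation to trivialize the class group of $R$ and on carefully tracking the Galois descent from $\ZZ_{p^2}^{\ur}$ to $\QQ_p^{\ur}$.
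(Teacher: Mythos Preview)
Your proposal sketches the genuine Drinfel'd moduli-theoretic argument, and as an outline it is broadly correct. However, the paper does not attempt to prove this theorem at all: its ``proof'' consists solely of a citation to the original work of \v{C}erednik and Drinfel'd and to the detailed exposition in Boutot--Carayol~\cite[Chap.~III, 5.3.1]{boutot1991upa}. So there is nothing to compare beyond noting that you are proposing to actually carry out what the paper merely cites; your route \emph{is} the standard one found in that reference, passing through Drinfel'd's representability of special formal $\cO_{\cB_p}$-modules by $\widehat\Omega$ and then using strong approximation to reduce the adelic double quotient to $\Gamma\backslash\cH_p$.
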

\begin{proof}
  Although the result is original of \v{C}erednik and Drinfel'd, a more detailed exposition of the proof can be found in~{\cite[Chap. III, 5.3.1]{boutot1991upa}}.
\end{proof}
\subsection{Modular forms}
\label{sec:modul-forms-shim}

Let $n\geq 0$ be an even integer. We want to explain the different ways of identifying modular forms with sections of certain sheaves associated to the Shimura curve $X\dfn X_{N^+,pN^-}$ as in Definition~\ref{df:ShimuraCurve}. Let $\cB, \cR^{\text{max}}, \cR$ be as in the definition of $X$, and choose another Eichler order $\tilde\cR\subseteq \cR$ with the property that the group of units of norm one $\tilde\cR^\times_1$ is free. Let $\tilde X$ be the Shimura curve associated to the order $\tilde\cR$, and let $G$ be the finite group $\cR^\times_1/\tilde\cR^\times_1$.

\begin{df}
Let $K$ be a field of characteristic $0$.A \emph{modular form of weight $n+2$ on $X$ defined over $K$} is a global section of the sheaf $\Omega_{\tilde X_K/K}^{\tns \frac{n+2}{2}}$ on $X_K$ which is invariant under the action of $G$. We denote by $M_{n+2}(X,K)$ the space of such modular forms.
\end{df}

\begin{rmk}
A simple argument shows that this definition does not depend on the choice of the auxiliary Eichler order $\tilde\cR$.
\end{rmk}

Let $K$ be either $\QQ_p^\ur$ or any complete field contained in $\CC_p$ which contains $\QQ_{p^2}$. Using the result of \v{C}erednik-Drinfel'd stated in Theorem~\ref{thm:cer-drin} we can give a more concrete description of $M_{n+2}(X,K)$.

\begin{df}
  A $p$-adic modular form of weight $n+2$ for $\Gamma$ is a rigid analytic function $f\colon\cH_p(\CC_p)\to\CC_p$, defined over $K$, such that
\[
f(\gamma z)=(cz+d)^{n+2}f(z),\qquad\text{ for all } \gamma=\smtx abcd \in \Gamma.
\]

Denote the space of such $p$-adic modular forms by $M_{n+2}(\Gamma)=M_{n+2}(\Gamma,K)$.
\end{df}

\begin{prop}
There is a canonical isomorphism
\[
M_{n+2}(\Gamma,K)\tto{\sim} M_{n+2}(X,K),
\]
which maps $f$ to $\omega_f\dfn f(z)dz^{\tns \frac{n+2}{2}}$.
\end{prop}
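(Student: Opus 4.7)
The plan is to use the Čerednik–Drinfel'd uniformization (Theorem~\ref{thm:cer-drin}) to translate modular forms on the Shimura curve into $\Gamma$-invariant data on $\cH_p$, and then to check that sections of $\Omega^{\tns (n+2)/2}$ on the uniformization are exactly given by the formula $f(z)dz^{\tns (n+2)/2}$ with $f$ satisfying the stated transformation law.

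First I would choose the auxiliary Eichler order $\tilde\cR\subseteq\cR$ so that $\tilde\Gamma\dfn\tilde\cR^\times_1$ is free (hence torsion-free and cocompact). By Čerednik–Drinfel'd applied to $\tilde\cR$, over $K$ (which contains $\QQ_{p^2}$ and thus splits $\cB$ appropriately) we have an isomorphism of rigid-analytic spaces $\tilde X_K^{\an}\cong \tilde\Gamma\backslash\cH_p$, with projection $\pi\colon\cH_p\to\tilde\Gamma\backslash\cH_p$ an étale covering (here torsion-freeness is essential). By GAGA for proper rigid varieties, the global sections of the algebraic sheaf $\Omega^{\tns(n+2)/2}_{\tilde X_K/K}$ coincide with the global sections of the corresponding analytic sheaf on $\tilde X_K^{\an}$.

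Next I would identify $H^0(\tilde\Gamma\backslash\cH_p,\Omega^{\tns(n+2)/2})$ with $\tilde\Gamma$-invariants in $H^0(\cH_p,\Omega^{\tns(n+2)/2})$. Since the global coordinate $z$ on $\cH_p\subseteq\PP^1$ trivializes $\Omega^1_{\cH_p}$, an arbitrary section on $\cH_p$ has the unique form $f(z)\,dz^{\tns (n+2)/2}$ with $f\in\cO(\cH_p)$. For $\gamma=\smtx abcd\in\GL_2(\QQ_p)$ one computes $\gamma^*dz=(cz+d)^{-2}dz$, so
\[
\gamma^*\bigl(f(z)\,dz^{\tns(n+2)/2}\bigr)=f(\gamma z)(cz+d)^{-(n+2)}\,dz^{\tns(n+2)/2}.
\]
Hence $\tilde\Gamma$-invariance of the section is equivalent to the transformation law $f(\gamma z)=(cz+d)^{n+2}f(z)$ for all $\gamma\in\tilde\Gamma$, giving a canonical identification
\[
H^0\bigl(\tilde X_K^{\an},\Omega^{\tns(n+2)/2}\bigr)\cong M_{n+2}(\tilde\Gamma,K),\qquad \omega_f\leftrightarrow f.
\]

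Then I would take $G=\Gamma/\tilde\Gamma$-invariants on both sides. The action of $G$ on $\tilde X_K$ corresponds on the uniformization side to the residual action of $\Gamma$ modulo $\tilde\Gamma$; the same cocycle computation shows that $G$-invariance of $\omega_f$ is equivalent to the transformation law $f(\gamma z)=(cz+d)^{n+2}f(z)$ for the remaining $\gamma\in\Gamma$. Combining, the $G$-invariant sections are exactly those $\omega_f$ with $f\in M_{n+2}(\Gamma,K)$, which by definition is $M_{n+2}(X,K)$. A standard check (independence from the choice of $\tilde\cR$, referenced in the remark preceding the proposition) shows that the resulting isomorphism is canonical.

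The main obstacle is verifying that the construction is genuinely independent of auxiliary choices and descends correctly over $K$: Čerednik–Drinfel'd is stated over $\QQ_p^{\ur}$, while the proposition is asserted over any complete $K\supseteq\QQ_{p^2}$, so one must trace through how $\Gamma$, the rigid structure on $\Gamma\backslash\cH_p$, and the sheaf $\Omega^{\tns(n+2)/2}$ all have natural models over $\QQ_{p^2}$ (via the twisted form coming from the Frobenius descent data on $\cH_p$), and check that the bijection $f\leftrightarrow\omega_f$ is $\Gal(\QQ_p^{\ur}/K)$-equivariant. Once this descent is in place, the remaining steps are formal.
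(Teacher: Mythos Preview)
The paper states this proposition without proof, so there is no argument to compare against. Your approach is the standard one and is correct: pass to the auxiliary torsion-free level $\tilde\Gamma$, invoke \v{C}erednik--Drinfel'd and rigid GAGA, trivialize $\Omega^1_{\cH_p}$ by the global coordinate $z$, compute $\gamma^*dz=(cz+d)^{-2}dz$, and then take $G$-invariants. The descent concern you raise at the end is the only point requiring care, and you have identified it correctly; once one knows that $\cH_p$ and hence $\Gamma\backslash\cH_p$ together with $\Omega^1$ have models over $\QQ_{p^2}$ (which is implicit in the setup of Section~\ref{subsection:padicuniformization} and in the standing hypothesis that $K\supseteq\QQ_{p^2}$), the bijection $f\leftrightarrow\omega_f$ is visibly Galois-equivariant and the argument goes through.
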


\subsection[Jacquet-Langlands]{The Jacquet-Langlands correspondence}
\label{section:jacquetlanglands}
In order to justify our interest in modular forms over Shimura curves, we would like to relate them to more familiar objects. Let $\TT$ be the abstract Hecke algebra generated by the Hecke operators $T_\ell$ for $\ell \nmid N$ and $U_\ell$ for $\ell\mid N$. The Hecke algebra $\TT$ acts naturally on the space $M_{n+2}(X,K)$, on which also act the Atkin-Lehner involutions.

\begin{thm}[(Jacquet-Langlands)]
\label{thm:jacquet-langlands}
  Let $K$ be a field. There is an isomorphism
\[
M_{n+2}(X,K)\tto{\sim} S_{n+2}(\Gamma_0(N),K)^{pN^- \text{-new}},
\]
which is compatible with the action of $\TT$ and the Atkin-Lehner involutions on each of the spaces.
\end{thm}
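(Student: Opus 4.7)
The plan is to reduce the statement to the classical Jacquet-Langlands correspondence for automorphic representations of $\GL_2$ and its inner forms, and then match the invariants that single out the two explicit finite-dimensional spaces appearing in the theorem.

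First I would adelize both sides. On the Shimura-curve side, by strong approximation for $\cB^\times$ (which uses that $\cB$ is indefinite) and the description of $M_{n+2}(X,K)$ via sections of $\Omega^{\otimes (n+2)/2}$, one identifies $M_{n+2}(X,K)$ with the space of automorphic forms on $\cB^\times(\bbA)$ of weight $n+2$ at infinity (the appropriate discrete-series type under $\cB^\times(\RR)\cong \GL_2(\RR)$) that are invariant under the compact open subgroup $\widehat\cR^\times \subset \cB^\times(\bbA_f)$ determined by the Eichler order $\cR$ of level $N^+$ inside the maximal order $\Rmax$ of discriminant $pN^-$. A parallel classical statement identifies $S_{n+2}(\Gamma_0(N),K)$ with automorphic forms on $\GL_2(\bbA)$ of weight $n+2$ that are invariant under the standard congruence subgroup $K_0(N)\subset \GL_2(\widehat\ZZ)$.

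Next, I would invoke the global Jacquet-Langlands theorem (I would cite Jacquet-Langlands \cite{MR0401654} or, more conveniently, the exposition in Gelbart; in this geometric incarnation it appears in Shimura's work and in Hida's book) as a black box. It gives an injection $\pi'\mapsto \pi$ from cuspidal automorphic representations of $\cB^\times(\bbA)$ to cuspidal automorphic representations of $\GL_2(\bbA)$ that preserves local components at every place where $\cB$ is split, and sends the (one-dimensional) local representation at each ramified prime to the Steinberg (special) representation. Since the primes ramified in $\cB$ are exactly those dividing $pN^-$ (all of which divide $N$ exactly once), the image is characterized by: weight $n+2$ at infinity, conductor dividing $N$, and local Steinberg type at each $\ell\mid pN^-$, which is exactly the $pN^-$-new subspace of $S_{n+2}(\Gamma_0(N),K)$.

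The remaining task is to match the levels and operators. At a prime $\ell\nmid N$, both compact open subgroups are the standard maximal compact, so the space of local invariants is one-dimensional and the spherical Hecke algebra is the same polynomial ring in $T_\ell$; the local JL correspondence preserves Satake parameters and hence $T_\ell$-eigenvalues. At $\ell \mid N^+$, both sides pick out the local new line at the Iwahori level, on which $U_\ell$ agrees. At $\ell\mid pN^-$, the $\widehat\cR^\times$-invariants of the Steinberg representation of $\cB^\times(\QQ_\ell)$ are one-dimensional, and the normalizer of $\widehat\cR^\times_\ell$ acts through an involution that matches the Atkin-Lehner involution $W_\ell$ on the new line of the classical Steinberg, together with the correct $U_\ell$-eigenvalue $\pm 1$. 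Putting these local matchings together globally realizes the asserted Hecke- and Atkin-Lehner-equivariant isomorphism.

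The hard part is really the input: the global Jacquet-Langlands correspondence itself is deep. Given that, the work above is largely bookkeeping, but the most delicate bookkeeping step is at the ramified primes, where one must check that the specific rigidification by $\Rmax$ (rather than some larger or non-maximal order) produces exactly the $pN^-$-new subspace, rather than an old subspace or a twist of it; this amounts to checking that $\widehat{\Rmax}{}^\times_\ell$ is the unique (up to conjugacy) maximal compact of the ramified $\cB^\times(\QQ_\ell)$ and that its invariants in the Steinberg-Jacquet-Langlands partner coincide, as Hecke and Atkin-Lehner modules, with the local new line at $\ell$.
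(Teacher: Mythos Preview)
The paper does not give a proof of this theorem at all: it is stated as a named result (attributed to Jacquet--Langlands) and used as a black box, with no \texttt{proof} environment or reference for the proof in the body. So there is nothing to compare your argument against.

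That said, your outline is the standard route one takes to extract this concrete statement from the general representation-theoretic correspondence: adelize both spaces, invoke the global Jacquet--Langlands transfer between cuspidal automorphic representations of $\cB^\times$ and of $\GL_2$, and then match local invariants at each place to identify the image with the $pN^-$-new subspace and to check Hecke and Atkin--Lehner compatibility. The only comment I would add is that your sketch silently assumes the theorem over $\CC$ (automorphic representations live over $\CC$), whereas the statement is for an arbitrary field $K$ of characteristic~$0$; you should remark that both sides have natural $\QQ$-structures preserved by the Hecke algebra (via $q$-expansions on the classical side and the $\QQ$-structure on the Shimura curve on the quaternionic side), so the isomorphism descends and then base-changes to any $K$. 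Apart from that, your proposal is a correct high-level account, with the genuine depth residing, as you note, in the Jacquet--Langlands correspondence itself.
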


Therefore to a classical modular $pN^-$-new eigenform $f_\infty$ on the modular curve $X_0(N)$, there is associated an eigenform $f$ on the Shimura curve $X$, which is unique up to scaling. In Section~\ref{section:LFunction} we will see the construction of a $p$-adic $L$-function attached to $f$ which interpolates special values of the classical $L$-function associated to $f_\infty$.

\section[Filtered \texorpdfstring{$\protect(\phi,N)$}{phi-N}-modules]{Filtered \texorpdfstring{$(\phi,N)$}{phi-N}-modules}
\label{section:filteredphiNmodules}
Let $K$ be a field of characteristic $0$, which is complete with respect to a discrete valuation and has perfect residue field $\kappa$, of characteristic $p>0$. Let $K_0\subseteq K$ be the maximal unramified subfield of $K$. Concretely, $K_0$ is the fraction field of the ring of Witt vectors of $\kappa$. Let $\sigma\colon K_0\to K_0$ be the absolute Frobenius automorphism.

Consider the category of filtered Frobenius monodromy modules (or $(\phi,N)$-modules) over $K$, denoted by $MF_K^{(\phi,N)}$. Its objects are quadruples $(D,\Fil^\bullet,\phi,N)$, where $D$ is a finite dimensional $K_0$-vector space, $\Fil^\bullet=\Fil^\bullet_D$ is an exhaustive and separated decreasing filtration on the vector space ${D_K\dfn D\tns[K_0] K}$ over $K$ (called the \emphh{Hodge filtration}), $\phi=\phi_D\colon D\to D$ is a $\sigma$-linear automorphism, called the \emphh{Frobenius on $D$}, and $N=N_D\colon D\to D$ is a $K$-linear endomorphism, called the \emphh{monodromy operator}, which satisfying $N\phi=p\phi N$. Sometimes we write $D$ to refer to the tuple $(D,\Fil_D^\bullet,\phi_D,N_D)$. For a precise definition of this category refer to~\cite{fontaine1994cpp} or to the lecture notes~\cite{brinon-p}.

Forgetting the monodromy action or, equivalently, setting $N=0$, gives a full subcategory of $\MF_K^{(\phi,N)}$, called the category of \emphh{filtered F-isocrystals over $K$}. The full subcategory obtained by additionally forgetting the filtration is the category of \emphh{isocrystals over $K_0$}, which were studied and classified by Dieudonn\'e and Manin.

The category $MF_K^{(\phi,N)}$ is an additive tensor category which admits kernels and cokernels. Also, if $D=(D,F_D^\bullet,\phi_D,N_D)$ is a filtered $(\phi,N)$-module, and $j$ is an integer, we define another filtered $(\phi,N)$-module $D[j]$, the $j$th \emphh{Tate twist} of $D$, as $D[j]=(D,F_D^{\bullet-j},p^j\phi_D,N_D)$, where by $F_D^{\bullet-i}$ we mean:
\[
F^i(D[j]_K)=F^{i-j}(D_K),\quad\text{ for all } i\in\ZZ.
\]

Consider the category $\Rep_{\QQ_p}(G_K)$ of $p$-adic representations of $G_K$, whose objects are finite-dimensional $\QQ_p$-vector-spaces with a continuous linear $G_K$-action. It is an abelian tensor category, with twists given by tensoring with powers of the Tate representation $\QQ_p(1)\dfn(\varprojlim_n\mmu_{p^n})\tns[\ZZ_p]\QQ_p$.

The functors $D_\st$ and $V_\st$ of Fontaine, constructed originally in~\cite{fontaine1994cpp}, are functors relating the category of $p$-adic representations of $G_K$ with that of filtered Frobenius monodromy modules:
\[
\xymatrix{
\Rep_{\QQ_p}(G_K)\ar@<+3pt>[r]^{\Dst}&\MF_K^{(\phi,N)},\ar@<+3pt>[l]^{\Vst}
}
\]
\begin{df}
A $p$-adic representation $V$ of $G_K$ is \emph{semistable} if the canonical injective map:
\[
\alpha\colon \Dst(V)\tns[K_0] \Bst=(V\tns[\QQ_p]\Bst)^{G_K}\tns[K_0]\Bst\injects (V\tns[\QQ_p]\Bst)\tns[K_0]\Bst\tto{\id\tns m} V\tns[\QQ_p]\Bst
\]
is surjective. The category of semistable representations, denoted $\Rep_\st(G_K)$ is the full subcategory of $\Rep_{\QQ_p}(G_K)$ of semistable objects.
\end{df}

\begin{rmk}
Let $X/K$ be a proper variety with a semi-stable model. Consider the \'etale cohomology groups:
\[
\Het^i(\ol X,\QQ_p)\dfn\left(\varprojlim_{n} \Het^i(\ol X,\ZZ/p^n\ZZ)\right)\tns[\ZZ_p]\QQ_p.
\]
These vector spaces are naturally finite-dimensional continuous $G_K$-representations. Results of Fontaine-Messing, Hyodo-Kato, Faltings and Tsuji imply that these representations are semistable. They constitute in fact the main source of semistable representations.
\end{rmk}

\begin{df}
A filtered $(\phi,N)$-module $D$ is \emph{admissible} if it is isomorphic to $\Dst(V)$ for some semistable representation $V$ of $G_K$. The full subcategory of admissible filtered $(\phi,N)$-modules is denoted $\MF_K^{\ad,(\phi,N)}$.
\end{df}

\begin{thm}[({\cite[Theorem A]{MR1779803}})]
The functors $\Dst$ and $\Vst$ give an equivalence of categories between $\Rep_\st(G_K)$ and $\MF_K^{\ad,(\phi,N)}$, which is compatible with exact sequences, tensor products and duality.
\end{thm}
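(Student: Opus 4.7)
The plan is to follow the strategy of Colmez--Fontaine in \cite{MR1779803}, where the content of the theorem is that $\Dst$ is fully faithful with essential image equal to $\MF_K^{\ad,(\phi,N)}$ and that $\Vst$ provides a quasi-inverse. First, I would verify that $\Dst$ and $\Vst$ land in the stated categories, using the definition $\Dst(V) = (V \tns[\QQ_p] \Bst)^{G_K}$ and $\Vst(D) = \Fil^0(D \tns[K_0] \Bst)^{\phi=1,\, N=0}$. The $K_0$-structure, Frobenius, monodromy and (after extending scalars to $K$) the Hodge filtration on $\Dst(V)$ are all inherited from the analogous structures on $\Bst$; tensoriality and duality of the functor then reduce to the multiplicative structure of $\Bst$ and the pairing with its Tate twists.

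Next, full faithfulness rests on the fundamental identity $\Bst^{G_K} = K_0$ together with the description of $(\Bst)^{\phi=1,\,N=0} \cap \Fil^0 = \QQ_p$, which implies
\[
\Hom_{G_K}(V_1, V_2) = \Hom_{\MF_K^{(\phi,N)}}(\Dst(V_1), \Dst(V_2))
\]
for semistable $V_1, V_2$. The adjointness morphism $\alpha$ in the definition of semistability then plays the role of a period pairing which is injective in general and becomes an isomorphism exactly on the semistable locus. This gives one direction of the equivalence, namely that $\Dst$ restricted to $\Rep_\st(G_K)$ is fully faithful and lands in $\MF_K^{\ad,(\phi,N)}$ by the very definition of admissibility.

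The essential surjectivity---that every admissible $D$ is of the form $\Dst(V)$---is the deep core of the theorem, and constitutes the main obstacle. The strategy is: given $D \in \MF_K^{\ad,(\phi,N)}$, set $V \dfn \Vst(D)$, and prove that $\dim_{\QQ_p} V = \dim_{K_0} D$ and that the canonical map $\Dst(V) \to D$ is an isomorphism. The dimension inequality $\dim_{\QQ_p}\Vst(D) \leq \dim_{K_0} D$ follows from weak admissibility bounds on Newton and Hodge polygons; the reverse inequality is hard, and Colmez--Fontaine obtain it by constructing enough Galois-invariant periods inside $\Bst$. Their method proceeds by devissage: one reduces to the case where $D$ is simple (in the $(\phi,N)$ sense) and uses slope filtrations of isocrystals together with an intricate analysis in $B_{\cris}^+$ to produce a rank-one sub-period, then iterates.

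Finally, compatibility with exact sequences follows once full faithfulness and essential surjectivity are in place, because $\Dst$ and $\Vst$ are both left exact, and $\Rep_\st(G_K)$ and $\MF_K^{\ad,(\phi,N)}$ are each stable under extensions in their ambient categories (a non-trivial fact already contained in the same body of work). Compatibility with tensor products reduces to showing that the canonical map $\Dst(V_1)\tns[K_0]\Dst(V_2) \to \Dst(V_1\tns V_2)$ is an isomorphism, which again comes down to a period-comparison in $\Bst$; compatibility with duality is handled by the perfect pairing $\Dst(V) \tns \Dst(V^\vee) \to \Dst(\QQ_p) = K_0$ arising from the duality in $\Bst$. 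Since the substantive content of the theorem---the construction of sufficiently many periods for admissible modules---is precisely the main theorem of \cite{MR1779803}, the honest proof consists in invoking that result and checking the tensorial and exactness compatibilities separately.
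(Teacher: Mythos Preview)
The paper does not give a proof of this statement: it is stated as a citation of Theorem~A of Colmez--Fontaine \cite{MR1779803}, with no argument provided. Your proposal is a reasonable high-level sketch of the Colmez--Fontaine strategy, but there is nothing in the paper itself to compare it against; the appropriate ``proof'' here is simply to invoke the reference, as the paper does.
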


The main use that we have for this fact is the following:
\begin{cor}
  Let $V,W$ be two objects in $\Rep_\st(G_K)$. The functors $\Dst$ and $\Vst$ induce a canonical group isomorphism
\[
\Ext^1_{\Rep_\st(G_K)}(V,W)\cong \Ext^1_{\MF_K^{\ad,(\phi,N)}}\left(\Dst(V),\Dst(W)\right),
\]
where $\Ext_{\cC}^1$ denotes the extension-group bifunctor in the category $\cC$.
\end{cor}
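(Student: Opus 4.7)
The plan is to recognize this corollary as a purely formal consequence of the equivalence-of-categories statement in the preceding theorem, using the Yoneda description of $\Ext^1$. In an exact subcategory $\cC$ of an abelian category, $\Ext^1_{\cC}(V,W)$ can be computed as the set of Baer equivalence classes of short exact sequences $0\to W\to E\to V\to 0$ with $E\in\cC$, with group law given by Baer sum. So the first step would be to verify that both $\Rep_\st(G_K)$ and $\MF_K^{\ad,(\phi,N)}$ are closed under extensions in their ambient abelian categories, so that this Yoneda description is available. This closure is immediate from the compatibility of $\Dst$ and $\Vst$ with exact sequences: if the outer terms of a short exact sequence are admissible (resp.\ semistable), the middle term is because applying $\Vst$ (resp.\ $\Dst$) and then $\Dst$ (resp.\ $\Vst$) reconstructs the sequence from objects in the corresponding subcategory.

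Next I would define the map
\[
\Dst_*\colon \Ext^1_{\Rep_\st(G_K)}(V,W)\lto \Ext^1_{\MF_K^{\ad,(\phi,N)}}(\Dst V,\Dst W)
\]
by sending the class of an extension $\cE\colon 0\to W\to E\to V\to 0$ to the class of $\Dst(\cE)$. Exactness of $\Dst$ ensures that the image is again a short exact sequence in $\MF_K^{\ad,(\phi,N)}$, and functoriality sends morphisms of extensions (those inducing the identity on both $V$ and $W$) to morphisms of extensions of the same form, so the assignment descends to Baer equivalence classes and yields a well-defined map on $\Ext^1$.

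To see that $\Dst_*$ is a group homomorphism, it suffices to observe that the Baer sum is constructed in purely categorical terms: via pullback along the diagonal $V\to V\oplus V$, pushout along the codiagonal $W\oplus W\to W$, and biproducts — all operations preserved by any additive equivalence compatible with short exact sequences. The quasi-inverse $\Vst$ then provides a two-sided inverse to $\Dst_*$, establishing the desired isomorphism. I do not anticipate any serious obstacle here: the real content of the corollary is packaged inside the equivalence-of-categories assertion of the theorem, and the remainder is a formal manipulation of Yoneda extensions. The only point requiring even a moment's reflection is the preservation of the Baer sum, and that is forced by its categorical characterization.
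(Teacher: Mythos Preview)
Your proposal is correct and matches the paper's treatment: the corollary is stated there without proof, as an immediate formal consequence of the equivalence of categories (compatible with exact sequences) in the preceding theorem, and your Yoneda-extension argument is exactly the standard way to spell that out. One small remark: the closure-under-extensions step is not strictly needed for the corollary as stated, since $\Ext^1_{\cC}$ already only considers extensions with middle term in $\cC$; your justification for that closure is also a bit circular as written (you cannot apply $\Vst\circ\Dst$ to $E$ before knowing $E$ is semistable), but the fact itself is standard in Fontaine's formalism and in any case unnecessary here.
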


Next we study the extensions of filtered $(\phi,N)$-modules. Let $D$ be an object in this category. Given a rational number $\lambda=r/s$, where $r,s\in\ZZ$ are such that $(r,s)=1$ and $s>0$, define $D_\lambda$ to be the largest subspace of $D$ which has an $\cO_{K_0}$-stable lattice $M$ satisfying $\phi^s(M)=p^rM$. The subspace $D_\lambda$ is called the \emphh{isotypical component} of $D$ of slope $\lambda$. The \emph{slopes} of $D$ are the rational numbers $\lambda$ such that $D_\lambda\neq 0$, and $D$ is called \emph{isotypical} of slope $\lambda_0$ if $D=D_{\lambda_0}$. The Dieudonn\'e-Manin classification gives a decomposition of isocrystals by slopes:
\[
D=\bigoplus_{\lambda\in \QQ} D_\lambda.
\]
Note also that $N(D_\lambda)\subseteq D_{\lambda-1}$ for all $\lambda\in \QQ$. The following result appears in \cite[Lemma 2.1]{iovita2003dpa}, although its proof is mostly omitted. For completeness, we present here a fully detailed proof.

\begin{lemma}
\label{lemma:easylemma}
Let $D$ be a filtered $(\phi,N)$-module, $n$ an integer, and assume that $N$ induces an isomorphism between the isotypical components $D_{n+1}$ and $D_{n}$. Then there is a canonical isomorphism
\[
\Ext^1_{\MF_K^{(\phi,N)}}(K[n+1],D)\cong  D/\Fil^{n+1}D,
\]
that maps the class of an extension
\[
0\to D \tto{\iota} E\tto{\pi} K[n+1]\to 0
\]
to $(s_1(1)-s_2(1))+\Fil^{n+1}D_K$, where:
\begin{enumerate}
\item $s_1\colon K[n+1]\to E$ is a splitting of $\pi$ which is compatible with the Frobenius and monodromy operator, but not necessarily with filtrations, and
\item $s_2\colon K[n+1]\to E$ is  splitting of $\pi$ compatible with the filtrations, but not necessarily with the Frobenius and monodromy operators.
\end{enumerate}
\end{lemma}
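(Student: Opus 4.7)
The plan is to construct the isomorphism explicitly by producing both the forward map and its inverse. Given an extension $0 \to D \tto\iota E \tto\pi K[n+1] \to 0$, I associate two kinds of sections of $\pi$. The filtration-compatible splitting $s_2 \colon K[n+1]_K \to E_K$ is a $K$-linear section with $s_2(1) \in \Fil^{n+1}E_K$; such $s_2$ exists because short exact sequences in $\MF_K^{(\phi,N)}$ are strict for the filtration, so $\Fil^{n+1}E_K$ surjects onto $\Fil^{n+1}K[n+1]_K = K$, and $s_2$ is unique modulo $\Fil^{n+1}D$. The $(\phi,N)$-compatible splitting $s_1 \colon K[n+1] \to E$ is a $K_0$-linear section with $\phi(s_1(1)) = p^{n+1}s_1(1)$ and $N(s_1(1)) = 0$. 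I send $[E]$ to $s_1(1) - s_2(1) \pmod{\Fil^{n+1}D}$, viewed inside $E_K$.

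The heart of the proof is the construction of $s_1$, which is where the hypothesis on $N$ enters. Pick any $K_0$-linear lift $y_0 \in E$ of $1$, set $u \dfn \phi(y_0) - p^{n+1}y_0 \in D$ and $v \dfn N(y_0) \in D$, and note that $N\phi = p\phi N$ forces the identity $N(u) = (p\phi - p^{n+1})(v)$. I seek $a \in D$ satisfying $(\phi - p^{n+1})a = -u$ and $Na = -v$, then put $s_1(1) \dfn y_0 + a$. Decompose $D = \bigoplus_\lambda D_\lambda$ by slopes. For $\lambda \neq n+1$, the operator $\phi - p^{n+1}$ is bijective on $D_\lambda$, uniquely determining $a_\lambda$; applying $N$ to the resulting equation and using bijectivity of $\phi - p^n$ on $D_{\lambda-1}$ (valid because $\lambda - 1 \neq n$) shows $N a_\lambda = -v_{\lambda-1}$ automatically. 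For $\lambda = n+1$, the hypothesis provides the unique $a_{n+1} \in D_{n+1}$ with $N(a_{n+1}) = -v_n$; a direct computation using $N(u) = (p\phi - p^{n+1})(v)$ shows $N\bigl((\phi - p^{n+1})a_{n+1} + u_{n+1}\bigr) = 0$, and the injectivity of $N$ on $D_{n+1}$ then forces $(\phi - p^{n+1})a_{n+1} = -u_{n+1}$. Uniqueness of $s_1$ is immediate: the difference of two such sections lies in $D_{n+1} \cap \ker N = 0$.

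Well-definedness of the forward map and additivity under Baer sum are now routine. To invert, given $\bar d \in D/\Fil^{n+1}D$ I construct the extension $E_d \dfn D \oplus K_0$, with $\phi$ acting by $\phi_D$ on $D$ and by $p^{n+1}\sigma$ on $K_0$, with $N$ vanishing on the $K_0$ summand, and with filtration $\Fil^i E_{d,K} \dfn \{(x,t) \in D_K \oplus K \stt x + td \in \Fil^i D_K\}$ for $i \leq n+1$ and $\Fil^i E_{d,K} \dfn \Fil^i D_K$ otherwise. A direct check gives $s_1(1) = (0,1)$ and $s_2(1) = (-d,1)$, so $[E_d]$ maps back to $\bar d$, and the two constructions are mutual inverses. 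The main obstacle is the slope-by-slope construction of $s_1$: the hypothesis that $N$ restricts to an isomorphism $D_{n+1} \tto\sim D_n$ is used essentially twice, once via surjectivity to produce $a_{n+1}$ and once via injectivity to conclude the slope-$(n+1)$ Frobenius equation, so both halves of the hypothesis are genuinely needed.
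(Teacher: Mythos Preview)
Your argument is correct. The overall architecture matches the paper's: construct $s_1$ and $s_2$, form their difference, and build the inverse via the explicit extension $E_d$ with the twisted filtration (your filtration formula agrees with the paper's once one unwinds the condition $t\in\Fil^{j-n-1}K$).

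The genuine difference is in how $s_1$ is produced. The paper works directly on slope components of $E$: restricting the short exact sequence to slopes $n+1$ and $n$ gives a two-row diagram whose left vertical arrow $N\colon D_{n+1}\to D_n$ is an isomorphism by hypothesis, and the snake lemma immediately yields $\ker(N\colon E_{n+1}\to E_n)\cong K_0$; its inverse is $s_1$. Your approach instead starts from an arbitrary lift $y_0\in E$ and corrects it by solving $(\phi-p^{n+1})a=-u$, $Na=-v$ slope-by-slope in $D$, invoking bijectivity of $\phi-p^{n+1}$ on $D_\lambda$ for $\lambda\neq n+1$ and the hypothesis on $N$ for $\lambda=n+1$. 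Both are valid; the paper's diagram chase is shorter and more conceptual, while your computation has the virtue of exhibiting explicitly where surjectivity and injectivity of $N\vert_{D_{n+1}}$ are each used (surjectivity to produce $a_{n+1}$, injectivity to force the Frobenius equation on that piece, and injectivity again for uniqueness of $s_1$). One small point worth making explicit in your write-up is the justification that $\phi-p^{n+1}$ is bijective on $D_\lambda$ for $\lambda\neq n+1$; this is standard (a contraction/expansion argument on a suitable lattice), but since $\phi$ is only $\sigma$-semilinear it deserves a sentence.
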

\begin{rmk}
  The fact that the splittings $s_1$ and $s_2$ exist is part of the statement of the lemma.
\end{rmk}

\begin{proof}
First, note that by applying the snake lemma to the following diagram with exact rows:
\[
\xymatrix{
0\ar[r]& \Fil^{n+1}D_K\ar[r]\ar@{^{(}->}[d]&\Fil^{n+1}E_K\ar[r]\ar@{^{(}->}[d]&\Fil^{n+1} K\ar@{=}[d]\ar[r]&0\\
0\ar[r]& D_K\ar[r]&E_K\ar[r]&K\ar[r]&0,
}
\]
we get an isomorphism $D_K/\Fil^{n+1}D_K\cong E_K/\Fil^{n+1}E_K$, and hence we just need to find an element in $E_K/\Fil^{n+1}E_K$. Explicitly, once we get $s_1(1)\in E_K$, we can consider $s_1(1)-s_2(1)$, where $s_2$ is a splitting of the extension which is compatible with the filtrations. Such a splitting $s_2$ exists because the category of $K$-vector spaces is semisimple. Since $\pi(s_1(1)-s_2(1))=0$, we can view $s_1(1)-s_2(1)$ as an element of $D_K$ (via $\iota$), thus making the isomorphism explicit.

The filtered $(\phi,N)$-module $K[n+1]$ is pure of slope $n+1$, and the hypothesis on the monodromy action $N$ on $D$ gives a commutative diagram with exact rows:
\[
\xymatrix{
0\ar[r]&D_{n+1}\ar[r]\ar[d]^{N}_{\cong}&E_{n+1}\ar[d]^{N}\ar[r]&K_0\ar[r]\ar[d]&0\\
0\ar[r]&D_{n}\ar[r]&E_{n}\ar[r]&0\ar[r]&0.
}
\]
An application of the snake lemma and the fact that the left vertical arrow is an isomorphism yields another isomorphism
\[
\pi_|\colon \ker\left(E_{n+1}\tto{N} E_{n}\right)\tto{\sim} K_0,
\]
and we define $s_1\colon K\to E_K$ as its inverse. Then $s_1$ is compatible with the action of $\phi$ and $N$, by construction.

We check that the assignment of $s_1(1)+\Fil^{n+1}E_K$ to an extension $0\to D\to E\to K[n+1]\to 0$ is well-defined: if the extension is trivial, then $s_1$ can be chosen to be compatible with $\Fil$, and we then get
\[
s_1(1)\in s_1(\Fil^n K[n+1])\subseteq \Fil^{n+1} E_K.
\]

Conversely, given $d+\Fil^{n+1}D_K\in D_K/\Fil^{n+1}D_K$, we construct a filtered $(\phi,N)$-module $E^{(d)}$ as an extension of $K[n+1]$ by $D$. We define $E^{(d)}_0=D_0\oplus (K_0[n+1])$, as $(\phi,N)$-modules. The filtration on $E^{(d)}_K=E^{(d)}_0\tns[K_0]K$ is defined as follows:
\[
\Fil^j E^{(d)}_K\dfn\left\{(x,t)\in D_K\oplus K\stt t\in \Fil^{j-n-1}K,\ x+td\in\Fil^jD\right\}.
\]

Consider the isomorphism class of the extension
\[
\Xi\colon\quad 0\to D\tto{\iota} E^{(d)}\tto{\pi} K[n+1]\to 0,
\]
where the map $\iota$ is the canonical inclusion, and the map $\pi$ is the canonical projection. Note that this sequence is exact and well defined, since
\[
\pi(\Fil^jE^{(d)}_K)=\Fil^{j-n-1}K=\Fil^j K[n+1].
\]
Moreover, if $d\in \Fil^{n+1}D_K$, then the map
\[
1\mapsto (0,1)
\]
splits the extension $\Xi$ in the category of filtered $(\phi,N)$-modules. Hence the map
\[
D_K/\Fil^{n+1}D_K\to \Ext^1(K[n+1],D) 
\]
which assigns the extension $\Xi$ to $d\in D/\Fil^{n+1}D$ is well defined.

To end the proof, we need to check that the two assignments are mutually inverse. Starting with $d+ \Fil^{n+1}D_K$, the vector space splitting $1\mapsto (0,1)$ is compatible with the Frobenius and monodromy actions. Also the vector space splitting $1\mapsto (-d,1)$ is compatible with the filtrations. We obtain the class of $d$ in $D_K/\Fil^{n+1}D_K$, as wanted.

Conversely, start with an arbitrary extension
\[
0\to D\tto{\iota} E\tto{\pi} K[n+1]\to 0.
\]
Choose $s_1$ and $s_2$ two splittings of $\pi$ as before, and define $d\in D_K$ such that $\iota_K(d)=s_1(1)-s_2(1)$. Consider now the map $E^{(d)}\to E$ sending
\[
(x,t)\mapsto \iota(x)+s_1(t)=\iota(x+td)+s_2(t).
\]
The first expression shows that this is a map of $(\phi,N)$-modules. The second expression shows that it respects the filtrations. Its inverse is the map
\[
y\mapsto \left(\iota^{-1}(y-s_1(\pi(y))),\pi(y)\right)=\left(\iota^{-1}(y-s_2(\pi(y)))-\pi(y)d,\pi(y)\right).
\]
Again, the first expression shows that it is respects the Frobenius and monodromy actions, while the second shows that it respects the filtrations. This concludes the proof.
\end{proof}

Let $\kappa$, $K_0$ and $K$ be as before. Let $Z$ be a formal $\cO_K$-scheme. The previous constructions can be extended to formal schemes, in a way that one recovers filtered Frobenius monodromy modules as the situation $Z=\Spf\cO_K$. More details can be found in the exposition of~\cite{iovita2003dpa}.

\begin{df}
An \emph{enlargement} of $Z$ is a pair $(T,z_T)$ consisting of a flat $p$-adic formal $\cO_K$-scheme $T$ and a morphism of formal $\cO_K$-schemes $z_T\colon T_0\to Z$ (where $T_0$ is closed subscheme of $T$ defined by the ideal $p\cO_T$, with the reduced scheme structure).

A \emphh{morphism of enlargements} of $Z$, say $(T',z_{T'})\to (T,z_T)$ is an $\cO_K$-morphism $g\colon T'\to T$ such that $z_T\circ g_0=z_{T'}$.
\end{df}

\begin{df}
A \emphh{convergent isocrystal} $\cE$ on $Z$ is the following data:
\begin{enumerate}
\item For every enlargement $T=(T,z_T)$ of $Z$, a coherent $\cO_T\tns[\cO_K] K$-module $\cE_T$.
\item For every morphism of enlargements $g\colon (T',z_{T'})\to (T,z_T)$, an isomorphism of $\cO_{T'}\tns[\cO_K]K$-modules
\[
\theta_g\colon g^*(\cE_T)\to \cE_{T'},
\]
such that the collection $\{\theta_g\}$ satisfies the cocycle condition.
\end{enumerate}
\end{df}

Let $\sigma\colon W(\kappa)\to W(\kappa)$ be the Frobenius automorphism on the ring of Witt vectors of $\kappa$, which can be lifted to the absolute Frobenius $F\colon Z\to Z^\sigma$. Given an enlargement $(T,z_T)$ of $Z$, the pair $(T,F\circ z_T)$ is an enlargement of $Z^\sigma$ and hence $(T^{\sigma^{-1}},(F\circ z_T)^{\sigma^{-1}})$ is an enlargement of $Z$. Given an isocrystal $\cE$, define $F^*\cE$ as the isocrystal which on $Z$ assigns to $(T,z_T)$ the $\cO_T\tns[\cO_K]K$-module:
\[
\alpha(\sigma)_* \cE_{(T^{\sigma^{-1}},(F\circ z_T)^{\sigma^{-1}})}.
\]

\begin{df}
A \emphh{convergent $F$-isocrystal} on $Z$ is a convergent isocrystal $\cE$ on $Z$ together with an isomorphism of crystals $\Phi\colon F^*\cE\to \cE$.
\end{df}

Assume from now on that $Z$ is analytically smooth over $\cO_K$. Associated to $\cE$ there is a coheren $\cO_{Z^\an}$-module $E^{\text{an}}=E_Z^{\text{an}}$, and in this case one may consider the Gauss-Manin connection $\nabla$, which is a natural connection on $E^\text{an}$ defined as a certain connecting homomorphism in the Hodge to de Rham spectral sequence for $Z$. A precise definition can be found in~\cite{katz1968drc}, and the required facts about its properties can be found in~\cite{ogus51f}.

\begin{df}
A \emphh{filtered convergent $F$-isocrystal} on $Z$ consists of a convergent $F$-isocrystal $\cE$ together with an exhaustive and separated decreasing filtration $\Fil^\bullet E^\text{an}$ of coherent $\cO_{Z^\text{an}}$-submodules, satisfying the following compatibility with respect to the Gauss-Manin connection $\nabla$:
\[
\nabla(\Fil^iE^\an)\subseteq (\Fil^{i-1}E^\an)\tns[\cO_Z^\text{an}] \Omega^1_{Z^\text{an}}.
\]
This condition is called \emphh{Griffiths' transversality} and is required in order to be able to define a filtration of the de Rham cohomology with coefficients in $E^\an$.
\end{df}

The category of filtered convergent $F$-isocrystals on $Z$ is an additive tensor category.

\begin{ex}
\label{ex:isocrystals}
\begin{enumerate}
\item\label{ex:trivial-isocrystal} The identity object $\cO_Z$ in this category is the assignment $T\mapsto \cO_T\tns K$. The Frobenius is the canonical one. The Gauss-Manin connection in this case is the trivial one, given by the usual derivation $d$. The filtration is given by
\[
\Fil^i=\begin{cases}
\cO_{Z^\text{an}}&\If i\leq 0\\
0&\Else .
\end{cases}
\]

\item\label{ex:derhamisocrystal} Let $f\colon X\to Z=\Spf(\cO_K)$ be smooth proper morphism of $p$-adic formal schemes. One can define an $F$-isocrystal $R^qf_*\cO_{X/K}$ using crystalline cohomology sheaves tensored with $K$. This is a filtered convergent $F$-isocrystal in a natural way: its analytification $(R^qf_*\cO_{X/K})^\text{an}$ is a coherent $\cO_{Z^\text{an}}$-module isomorphic to the relative de Rham cohomology $\cHdr^q(X^\text{an}/Z^\text{an})$, and the connection is the Gauss-Manin connection $\nabla$. The filtration is given by the Hodge filtration, induced from the Hodge to de Rham spectral sequence, as explained in~\cite{katz1968drc}.
\end{enumerate}
\end{ex}

\section{Hidden structures on the de Rham cohomology}
\label{sec:filfrobmonodromy}
\label{section:HiddenStructures}
This section recalls some of the results of \cite{coleman2003hss}. Let $\fX\to \Spec(\cO_K)$ be a proper semistable curve with connected fibers. Suppose that its generic fiber $X$ is smooth and projective, that the irreducible components $C_1,\ldots,C_r$ of the special fiber $C$ are smooth and geometrically connected, and that there are at least two of them. Assume also that the singular points of $C$ are $\kappa$-rational ordinary double points.

Denote by $X^\an$ the rigid analytification of $X$. We want to describe an admissible covering of $X^\an$. Consider the special fiber $C$ of $X$, and let $\fG=(\fV(\fG),\vec\fE(\fG))$ be the (oriented) intersection graph of $C$: there is one vertex for each irreducible component $C_i$, and the oriented edges are triples $e=(x,C_i,C_j)$, where $x$ is a singular point of $C$, and $C_i$ and $C_j$ are the two components on which $x$ lies. We set $o(e)=C_i$ and $t(e)=C_j$, and write $\bar e$ for the opposite edge $(x,C_j,C_i)$.

For each vertex $v=C_i$ of $\fG$, let $U_v\dfn \red^{-1}(C_i)$ be the tube associated to it. Here $\red\colon X^\an\to C(\bar k)$ is the reduction map. For each edge $e=(x,C_i,C_j)$, let $A_e$ be the wide open annulus $\red^{-1}(x)=U_{o(e)}\cap U_{t(e)}$, together with the orientation given by $e$. The sets $U_v$ give an admissible covering of $X^\an$. Define an involution $\ol{(\cdot)}$ on $\vec\fE(\fG)$ which maps an edge $e=(x,C_i,C_j)$, to
\[
\ol e\dfn(x,C_j,C_i).
\]
Write $\fE(\fG)$ for the set of unoriented edges of $\fG$, which can be thought as the set of equivalence classes of $\vec\fE(\fG)$ by this involution.

Let $E$ be a coherent locally free sheaf of $\cO_X$-modules, with a connection
\[
\nabla\colon E\to E\tns[\cO_X] \Omega^1_X,
\]
and filtration $\Fil^\bullet E$ by $\cO_X$-submodules satisfying Griffiths transversality. That is, such that
\[
\nabla(\Fil^iE)\subseteq (\Fil^{i-1}E)\tns[\cO_X] \Omega^1_X.
\]
Assume from now on that $E$ comes from a filtered convergent $F$-isocrystal $\cE$ on $\fX$. The de Rham cohomology of $X$ with coefficients in $E$ can be given the structure of a filtered $(\phi,N)$-module. Moreover, if $S$ is a finite set of points of $X$ and $U\dfn X\setminus S$, one can also give this structure to $\Hdr^1(U,E)$. The construction is detailed in~\cite[Subsection 2]{coleman2003hss}, and we recall it in the next two subsections. One needs to assume that the filtered $F$-isocrystal $\cE$ is \emphh{regular}, which is a condition on the characteristic polynomials of Frobenius acting on various crystalline cohomology groups. For the precise definition, see~\cite[Definition 2.3]{coleman2003hss}.

\subsection[\texorpdfstring{$\protect\Hdr^1(X,E)$}{Hdr1(X,E)}]{\texorpdfstring{$\protect\Hdr^1(X,E)$}{Hdr1(X,E)} as a filtered \texorpdfstring{$\protect (\phi,N)$}{phi-N}-module}
The (algebraic) de Rham cohomology of $X$ with coefficients in $E$, denoted by $\Hdr^*(X,E)$, is defined to be the hypercohomology of the complex of sheaves of $\cO_X$-modules:
\[
0\to E\tto{\nabla} E\tns\Omega^1_X\to 0.
\]
By rigid-analytic GAGA, this coincides with the rigid-analytic cohomology. We describe explicitly the space $\Hdr^1(X,E)$, using the admissible covering described above: an element $x\in\Hdr^1(X,E)$ can be represented by a $1$-hypercocycle
\[
\omega=\left(\{\omega_v\}_{v\in\fV(\fG)};\{f_e\}_{e\in\vec\fE(\fG)}\right),
\]
where the $\omega_v\in (E^\an\tns\Omega^1_{X^\an})(U_v)$, and $f_e\in E^\an(A_e)$ satisfy

\begin{align*}
\omega_{o(e)}|_{A_e}-\omega_{t(e)}|_{A_e}&=\nabla(f_e),\text{ and } f_{\bar e}=-f_e.
\end{align*}
Two such $1$-hypercocycles represent the same element $x\in\Hdr^1(X,E)$ if their difference is of the form
\[
\left(\{\nabla(f_v)\}_{v\in\fV(\fG)};\{f_{o(e)}|_{A_e}-f_{t(e)}|_{A_e}\}_{e\in\vec\fE(\fG)}\right),
\]
for some family $\{f_v\}_{v\in\fV(\fG)}$ with $f_v\in E^\an(U_v)$.

Assume from now on that the admissible opens $U_v$ and $A_e$ appearing in the covering are acyclic for coherent sheaf cohomology. Consider the maps induced by inclusion:
\begin{align*}
f&\colon \coprod_{v\in\fV(\fG)} U_v\to X,&g&\colon\coprod_{e\in\vec\fE(\fG)} A_e\to X.
\end{align*}
They give an exact sequence of sheaves on $X^\an$:
\[
0\to E^\an\to f_*f^*E^\an\to g_*g^*E^\an\to 0,
\]
which induces the Mayer-Vietoris long exact sequence:
\begin{align*}
0&\to \Hdr^0(X,E)\to\oplus_{v\in \fV(\fG)} \Hdr^0(U_v,E^\an)\to\oplus_{e\in \fE(\fG)} \Hdr^0(A_e,E^\an)\to\\
&\to\Hdr^1(X,E)\to\oplus_{v\in\fV(\fG)}\Hdr^1(U_v,E^\an)\to\oplus_{e\in\fE(\fG)}\Hdr^1(A_e,E^\an)\to\cdots
\end{align*}
We extract a short exact sequence
\begin{equation}
\label{eqn:basic}
0\to (H^0_\fE)^-/H^0_\fV\tto{\iota} \Hdr^1(X,E)\tto{\gamma} \ker \left(H^1_\fV\to H^1_\fE\right)\to 0,
\end{equation}
where
\begin{align*}
H^i_\fE&=\bigoplus_{e\in \vec\fE(\fG)} \Hdr^i(A_e,E^\an),&H^i_\fV&=\bigoplus_{v\in \fV(\fG)} \Hdr^i(U_v,E^\an),
\end{align*}
and the superscript $^-$ indicates the subspace of $H_\fE^0$ consisting of elements $\{f_e\}_e$ such that $f_{\bar e}=-f_e$ for all $e\in \vec\fE(\fG)$. The maps $\iota$ and $\gamma$ are given by the following recipe:
\begin{enumerate}
\item Let $\{f_e\}_{e\in\vec\fE(\fG)}$ with $f_e\in\Hdr^0(A_e,E^\an)$ satisfying $f_{\ol e}=-f_e$. Then $\iota$ sends the class of $\{f_e\}$ to the $1$-hypercocycle $(\{0\}_v;\{f_e\})$. Note that this is indeed a hypercocycle, since $\nabla f_e=0$.
\item Let $(\{\omega_v\}_v;\{f_e\}_e)$ be a $1$-hypercocycle representing a class $x\in\Hdr^1(X,E)$. Then $\gamma$ sends $x$ to the class of $\{\omega_v\}_v$ in $\bigoplus_v \Hdr^1(U_v,E^\an)$.
\end{enumerate}

In the following paragraphs we describe the structure as a filtered $(\phi,N)$-module of $\Hdr^1(X,E)$. The Hodge filtration is defined as
\[
\Fil^i\Hdr^1(X,E)\dfn \img\left(\bbH^1(X,\Fil^iE\tto{\nabla} \Fil^{i-1}E\tns\Omega^1_X)\to \bbH^1(X,E\tns \Omega^\bullet)\right).
\]
where the map $\bbH^1(X,\Fil^iE\tto{\nabla} \Fil^{i-1}E\tns\Omega^1_X)\to \bbH^1(X,E\tns \Omega^\bullet)$ is induced by functoriality from the inclusion of complexes
\[
\xymatrix{
\Fil^iE\ar@{^{(}->}[d]\ar[r]^-{\nabla}&\Fil^{i-1}E\tns[\cO_X]\Omega^1_X\ar@{^{(}->}[d]\ar[r]&\cdots\\
E\ar[r]^-{\nabla}& E\tns[\cO_X]\Omega^1_X\ar[r]&\cdots
}
\]
Note that this filtration coincides with the one induced from the Hodge to de Rham spectral sequence computing $\Hdr^*(X,E)$.

The Frobenius operator is defined by first splitting the exact sequence in Equation~\eqref{eqn:basic} and defining it in the outer terms. To define the splitting, we will use the Coleman integrals, so fix once and for all a branch of the $p$-adic logarithm. The map $\iota$ admits a retraction $P$ defined as follows: let $x\in \Hdr^1(X,E)$ be represented by the $1$-hypercocycle $(\{\omega_v\}_v;\{f_e\}_e)$. For any $v\in\fV(\fG)$, define $F_v$ to be a Coleman primitive of $\omega_v$, as introduced in Section~\ref{section:RigAn}. Then the map $P$ assigns to $x$ (the class of) the family $\{g_e\}_{e\in \vec\fE(\fG)}$, where
\[
g_e\dfn f_e-(F_{o(e)}|_{A_e}-F_{t(e)}|_{A_e}).
\]
Note that this map is well defined because the integrals $F_v$ are defined up to a rigid horizontal section of $E^\an|_{U_v}$.

There is an action of Frobenius on the left and right terms of the exact sequence~\eqref{eqn:basic}. That is, there are lattices inside the space $\Hdr^0(A_e,E^\an)$ and inside $\Hdr^1(U_v,E^\an)$, and respective actions of Frobenius. Concretely, if $e=(x,C_i,C_j)$, then $\Hcris^0(x,E^\an)$ is a $K_0$-lattice with a natural Frobenius. Also, $\Hdr^1(U_v,E^\an)$ has a natural lattice and action of Frobenius induced from the action on $\cE$.

Using the splitting $P$ we obtain a lattice inside $\Hdr^1(X,E)$, together with a Frobenius that will be called $\Phi$.

Lastly, we define the monodromy operator $N$. Associated to each open annulus $A_e$ such that $\left(E^\an|_{A_e}\right)$ has only constant horizontal sections, there is a natural annular residue map $\res_e=\res_{A_e}$:
\[
\res_e\colon \Hdr^1(A_e,E^\an)\to \Hdr^0(A_e,E^\an)\cong \left(E^\an|_{A_e}\right)^{\nabla=0}.
\]
For a fixed edge $e_0\in\vec\fE(\fG)$ there is a natural map $h_{e_0}\colon \Hdr^1(X,E)\to\Hdr^1(A_{e_0},E^\an)$ which sends $(\{\omega_v\}_v;\{f_e\}_e)$ to the class of $\omega_{o(e_0)}|_{A_e}$. Note that this coincides with the class of $\omega_{t(e_0)}|_{A_e}$. The monodromy operator $N$ on $\Hdr^1(X,E)$ is defined as:
\[
N\dfn \iota\circ \left(\oplus_e (\res_e\circ h_e)\right)\colon \Hdr^1(X,E)\to \Hdr^1(X,E).
\]
A simple computation shows that, as expected, $N$ satisfies:
\[
N\Phi=p\Phi N.
\]

\subsection{\texorpdfstring{$\protect\Hdr^1(U,E)$}{Hdr1(U,E)} as a filtered \texorpdfstring{$\protect (\phi,N)$}{phi-N}-module}
Let $S$ be a finite set of $K$-rational points on $X$ which are smooth (when considered as points on $\fX$), and which specialize to pairwise different smooth points on $C$.

Let $U=X\setminus S$. One can define, in a similar way as in the previous subsection, a structure of a filtered $(\phi,N)$-module on $\Hdr^1(U,E)$. The monodromy operator is defined as in the previous subsection. To define the Frobenius, one needs to work with logarithmic isocrystals. There is again an exact sequence
\[
0\to (H^0_\fE)^-/H^0_\fV\tto{\iota} \Hdr^1(U,E)\tto{\gamma} \ker \left(H^1_\fV\to H^1_\fE\right)\to 0,
\]
where this time
\begin{align*}
H^i_\fE&=\bigoplus_{e\in \vec\fE(\fG)} \Hdr^i(A_e,E^\an),&H^i_\fV&=\bigoplus_{v\in \fV(\fG)} \Hdr^i(U_v\setminus S,E^\an).
\end{align*}

The left-most term is the same as before, because the zeroth cohomology does not change by removing a finite set of points. So to define the Frobenius on $\Hdr^1(U,E)$ one has to define it on the right-most term. This is done in \cite[Subsection 5]{coleman2003hss}, where the de Rham cohomology $\Hdr^1(U_v\setminus S,E^\an)$ is described in terms of the log-crystalline cohomology with coefficients in $j^*E$ of the component $C_i=v$ of $C$, where $j$ is the canonical morphism of formal log-schemes $j\colon (\widehat\fX,\text{log structure})\to (\widehat\fX,\text{trivial})$.

The Gysin sequence
\[
\xymatrix{
0\ar[r]& \Hdr^1(X,E)\ar[r]& \Hdr^1(U,E)\ar[r]^-{\oplus \res_x}&\bigoplus_{x\in S}\cE_x[1]
}
\]
becomes in this way an exact sequence of filtered $(\phi,N)$-modules.

Let $f\colon\fY\to\fX$ be a smooth proper morphism, and let $Y$ be the generic fiber of $\fY$. The relative de Rham cohomology
\[
\cHdr^q(Y/X)\dfn R^qf_*\cO_{\widehat\fY/K}
\]
can be given the structure of a filtered convergent F-isocrystal, as in Example~\ref{ex:isocrystals}(\ref{ex:derhamisocrystal}). Consider the $G_{\QQ}$-representation
\[
\Het^1(\overline X,R^qf_*\QQ_p),
\]
and the filtered $(\phi,N)$-module
\[
\Hdr^1\left(X,\cHdr^q(Y/X)\right)
\]
defined above. The following result relates these two objects. Its proof can be found in~{\cite[Theorem~7.5]{coleman2003hss}}.

\begin{thm}[(Faltings, Coleman-Iovita)]
Using the previous notations:
\label{thm:is36}
\begin{enumerate}
\item The representation $\Het^1(\overline X,R^qf_*\QQ_p)$ is semistable, and there is a canonical isomorphism of filtered $(\phi,N)$-modules
\[
\Dst\left(\Het^1(\overline X,R^qf_*\QQ_p)\right)\cong \Hdr^1\left(X,\cHdr^q(Y/X)\right).
\]
\item More generally, let $S$ be a finite set of smooth sections of $f\colon \fX\to \Spec(\cO_K)$, which specialize to pairwise different (smooth) points on $C$, and let $U=X\setminus S$, $\overline U=U\tns[K]\overline K$, and $Y_{\overline x}$ be the geometric fiber of $f\colon Y\to X$ over $x\in S$. Then there is an exact sequence of semistable Galois representations
\[
0\to \Het^1(\overline X,R^qf_*\QQ_p)\to\Het^1(\overline U,R^qf_*\QQ_p)\to \bigoplus_{s\in S} \Het^q(Y_{\overline x},\QQ_p(-1)),
\]
which becomes isomorphic to the sequence
\[
0\to \Hdr^1(X,E)\to \Hdr^1(U,E)\to \bigoplus_{x\in S} \cE_x[1]
\]
after applying the functor $\Dst$ (and setting $\cE=\cHdr^q(Y/X)$).
\end{enumerate}
\end{thm}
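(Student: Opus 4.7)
The plan is to derive both parts from the semistable $p$-adic comparison theorem $C_{\st}$ (the Fontaine--Jannsen conjecture, established by Tsuji, Faltings, and others) applied to the total space $Y$, together with a Leray-type argument that transfers the comparison along the morphism $f$.

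For part (1), I begin by noting that since $f\colon \fY\to \fX$ is smooth proper and $\fX/\cO_K$ has semistable reduction, $\fY/\cO_K$ is itself semistable. Applying $C_{\st}$ to the generic fibre $Y$ yields canonical isomorphisms of filtered $(\phi,N)$-modules
\[
\Dst\bigl(\Het^n(\overline Y,\QQ_p)\bigr)\cong \Hdr^n(Y/K),\qquad n\geq 0,
\]
so in particular each $\Het^n(\overline Y,\QQ_p)$ is semistable. Next I compare the two Leray spectral sequences: the étale one $E_2^{p,q}=\Het^p(\overline X, R^qf_*\QQ_p)\Rightarrow \Het^{p+q}(\overline Y,\QQ_p)$ and the de Rham one $E_2^{p,q}=\Hdr^p(X,\cHdr^q(Y/X))\Rightarrow \Hdr^{p+q}(Y/K)$. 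Both carry compatible Frobenius, monodromy, and Hodge filtration structures. The crux is to show that $\Dst$ identifies these two spectral sequences term by term, and since $X$ is a curve, each one abuts at the $E_2$-page and degenerates there (the differentials $d_2$ vanish for weight reasons). Extracting the $(1,q)$-row then produces the semistability of $\Het^1(\overline X, R^qf_*\QQ_p)$ together with the desired isomorphism in (1).

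For part (2), the same strategy proceeds with log-structures in place of the smooth ones. The punctured curve $U$ inherits from $\fX$ a natural divisorial log-structure along the sections $S$, and the filtered $(\phi,N)$-structure on $\Hdr^1(U,E)$ built in Section~\ref{section:HiddenStructures} coincides with the one arising from log-crystalline cohomology. The open-variety version of $C_{\st}$ then gives both the semistability of $\Het^1(\overline U, R^qf_*\QQ_p)$ and an isomorphism $\Dst(\Het^1(\overline U, R^qf_*\QQ_p))\cong \Hdr^1(U,\cHdr^q(Y/X))$; compatibility of the étale Gysin sequence with the de Rham residue sequence then follows from the fact that $C_{\st}$ commutes with purity maps and Gysin morphisms on both sides.

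The principal obstacle is matching the explicit constructions of Frobenius and monodromy on $\Hdr^1(X,E)$ from Section~\ref{sec:filfrobmonodromy}---built out of an auxiliary splitting via Coleman integration and out of annular residues---with the abstract Galois-side structures produced by $\Dst$. This is precisely the hard technical content of~\cite{coleman2003hss}, and rests on Faltings' almost-purity theorem together with his comparison for smooth $\QQ_p$-sheaves on semistable curves with coefficients. Rather than reproduce that delicate identification, I would invoke~\cite[Theorem~7.5]{coleman2003hss} directly for the final matching of structures, limiting my own contribution to setting up the Leray comparison cleanly and checking the Gysin compatibility for the particular sections $S$ we will consider.
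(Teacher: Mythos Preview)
Your proposal is consistent with the paper's treatment: the paper does not give an independent proof of this theorem at all, but simply states that ``its proof can be found in~\cite[Theorem~7.5]{coleman2003hss}.'' Your sketch---reducing to $C_{\st}$ for the total space, comparing Leray spectral sequences, and then deferring the delicate identification of the explicit $(\phi,N)$-structures on $\Hdr^1$ with the abstract ones coming from $\Dst$ to the cited reference---is a reasonable expansion of what that citation entails, and you correctly identify that the hard content lies in the Coleman--Iovita matching rather than in the spectral-sequence bookkeeping.
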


\section{The cohomology of $X_\Gamma$, and pairings}
\label{section:Cohomology}
\label{section:CohomologyXGamma}
\subsection[\texorpdfstring{$\protect\Hdr^1(X_\Gamma,E(V))$}{Hdr1(X,E(V))}]{The filtered \texorpdfstring{$\protect(\phi,N)$}{phi-N}-module \texorpdfstring{$\protect\Hdr^1(X_\Gamma,E(V))$}{Hdr1(X,E(V))}}
\label{subsection:cohomologyXGamma}
We want to specialize the constructions made in the previous subsections to the situation in our work. We will assume that the curve $X$ is a certain quotient of the $p$-adic upper-half plane, and we will restrict also the class of filtered convergent $F$-isocrystals that we consider. Let $V$ be an object of $\Rep_{\QQ_p}(\gl_2\times\gl_2)$. In \cite[Subsection~4]{iovita2003dpa} the authors associate to $V$ a filtered convergent $F$-isocrystal on the canonical formal $\ZZ_p^\ur$-model of the upper-half plane $\widehat\cH$, which is denoted $\cE(V)$. Also, for every $\QQ_{p^2}$-rational point $\Psi\in \Hom(\QQ_{p^2},M_2(\QQ_p))$ of $\widehat\cH$ they compute the stalk $\cE(V)_\Psi$ as a filtered $(\phi,N)$-module $V_\Psi\in \MF_{\QQ_p^\text{ur}}^{(\phi,N)}$. The assignment $V\mapsto \cE(V)$ is an exact tensor functor.

The previous construction can be descended to give isocrystals on Mumford curves: if $\Gamma$ is a discrete cocompact subgroup of $\SL_2(\QQ_p)$, let $X_\Gamma$ be the associated Mumford curve over $\QQ_p^\text{ur}$, so that $X_\Gamma^{\text{an}}=\Gamma\backslash\cH_p$. Denote the new filtered isocrystal on $X_\Gamma$ by the same symbol $\cE(V)$ as well.

Let $E(V)$ be the coherent locally free $\cO_{X_\Gamma}$-module with connection and filtration corresponding to $\cE(V)$, so that $\cE(V)=E(V)^\an$.

In \cite{iovita2003dpa} the authors give a concrete description of the filtered $(\phi,N)$-module
$\Hdr^1(X_\Gamma,E(V))$ and, if $U\subseteq X_\Gamma$ is an open subscheme as before, also of the filtered $(\phi,N)$-module $\Hdr^1(U,E(V))$. This is possible because both the curve $X_\Gamma$ and the coefficients $E(V)$ are known explicitly. We will assume that $\Gamma$ is torsion free. We can reduce to this situation as follows: choose $\Gamma'\subset \Gamma$ a free normal subgroup of finite index. The group $\Gamma/\Gamma'$ acts on the filtered $(\phi,N)$-modules $\Hdr^1(X_\Gamma,E(V))$ and $\Hdr^1(U,E(V))$ as automorphisms preserving the operators and the filtration. Hence it induces a structure of filtered $(\phi,N)$-module on
\[
\Hdr^1(X_\Gamma,E(V))\dfn \Hdr^1(X_{\Gamma'},E(V))^{\Gamma/\Gamma'},
\]
and similarly for $\Hdr^1(U,E(V))$.

\label{subsection:hdrXEV}

The fact that $\cH_p$ is a Stein space in the rigid-analytic sense  allows for the computation of $\Hdr^1(X_\Gamma,E(V))$ as group hyper-cohomology, via the Leray spectral sequence. More precisely, the $K$-vector space $\Hdr^1(X_\Gamma,E(V))$ can be computed as the first group hyper-cohomology:
\[
\Hdr^1(X,E(V))=\bbH^1(\Gamma,\Omega^\bullet\tns V),
\]
where $\Omega^\bullet$ is the de Rham complex
\[
\Omega^\bullet:\qquad 0\to \cO_{\cH_p}(\cH_p)\to \Omega^1_{\cH_p}(\cH_p)\to 0.
\]
Concretely, the elements in $\Hdr^1(X,E(V))$ are represented by pairs $(\omega,f_\gamma)$, where $\omega$ belongs to $\Omega^1(\cH_p)\tns V$ and $f_\gamma$ is a $\cO_{\cH_p}(\cH_p)\tns V$-valued $1$-cocycle for $\Gamma$. They are required to satisfy the relation
\[
\gamma\omega-\omega=df_\gamma,\quad\text{for all } \gamma\in\Gamma.
\]

Let $\Gamma$ be a cocompact subgroup of $\PSL_2(\QQ_p)$, and let $M$ be a $\CC_p[\Gamma]$-module. An \emph{$M$-valued $0$-cocycle} (resp.~$1$-cocycle) on $\cT$ is an $M$-valued function $c$ on $\fV(\cT)$ (resp.~on $\fE(\cT)$, such that $c(\overline e)=-c(e)$). The $\CC_p$-vector space of $M$-valued $0$-cocycles (resp.~$1$-cocyles) is written $C^0(M)$ (resp.~$C^1(M)$). An $M$-valued $0$-cocycle $c$ is called \emph{harmonic} if it satisfies, for all $v\in\fV(\cT)$,
\[
\sum_{e\in\fE(\cT),o(e)=v} c(o(e))-c(t(e))=0.
\]
The $\CC_p$-vector space of $M$-valued harmonic $0$-cocycles is written $\Char^0(M)$. An $M$-valued $1$-cocycle $c$ is called \emph{harmonic} if it satisfies
\[
\sum_{o(e)=v} c(e)=0
\]
for all $v\in\fV(\cT)$. The $\CC_p$-vector space of $M$-valued harmonic $1$-cocycles is written $\Char^1(M)$.

The group $\Gamma$ acts on the left on $\Char^i(M)$ by
\[
\gamma\cdot c\dfn \gamma\circ c\circ \gamma^{-1},\quad \gamma\in\Gamma,c\in\Char^i(M).
\]

Let $\cP_n$ be the $n+1$-dimensional $\QQ_p$-vector space of polynomials of degree at most $n$ with coefficients in $\QQ_p$. The group $\GL_2(\QQ_p)$ acts on the right on $\cP_n$ by
\[
P(x)\cdot \beta\dfn (cx+d)^nP\left(\frac{ax+b}{cx+d}\right),
\]
if $\beta=\smtx abcd$. In this way its $\QQ_p$-linear dual $V_n\dfn\cP_n^\vee$ is endowed with a left action of $\GL_2(\QQ_p)$.

\begin{df}
A \emph{harmonic cocycle of weight $n+2$ on $\cT$} is a $V_{n}$-valued harmonic cocycle.
\end{df}

Define now $\cU$ as the subspace of $M_2(\QQ_p)$ given by matrices of trace $0$. They have a right action of $\GL_2(\QQ_p)$ given by
\[
u\cdot \beta\dfn \overline{\beta} u \beta,
\]
where $\overline{\beta}$ is the matrix such that $\overline{\beta}\beta=\det(\beta)$.

There is a map $\Phi\colon \cU\to \cP_2$ intertwining the action $\GL_2(\QQ_p)$, given by
\begin{equation}
\label{eq:defpoly}
u\mapsto P_u(x)\dfn \tr\left(u\smat{x&-x^2\\1&-x}\right)=\tr\left(u\smat{x\\1}\smat{1&-x}\right)=\smat{1&-x}u\smat{x\\1}.
\end{equation}

\begin{lemma}
The map $\Phi$ induces an isomorphism of right $\GL_2(\QQ_p)$-modules.
\end{lemma}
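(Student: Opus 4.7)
My plan is to exhibit $\Phi$ as a linear isomorphism of 3-dimensional $\QQ_p$-vector spaces and then check the equivariance by a clean matrix manipulation that avoids expanding the action coordinate-by-coordinate.

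First I would verify that $\Phi$ is a linear isomorphism. Write a traceless matrix as $u=\smtx{a}{b}{c}{-a}$; a direct computation of $u\smtx{x}{-x^2}{1}{-x}$ and taking its trace gives $P_u(x)=-cx^{2}+2ax+b$. Thus on the ordered basis $\{\smtx{0}{0}{1}{0},\smtx{1}{0}{0}{-1},\smtx{0}{1}{0}{0}\}$ of $\cU$, the map $\Phi$ sends the basis to $-x^{2},2x,1$, which is a basis of $\cP_2$. Hence $\Phi$ is a $\QQ_p$-linear bijection. (The sign on $-x^{2}$ and the factor $2$ on $x$ explain why this is the right normalization for the action to intertwine.)

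For equivariance, the key observation is the rank-one factorization
\[
\smtx{x}{-x^2}{1}{-x}=\begin{pmatrix}x\\1\end{pmatrix}\begin{pmatrix}1&-x\end{pmatrix}.
\]
Let $\beta=\smtx{a}{b}{c}{d}\in\GL_2(\QQ_p)$ and $\ol\beta=\smtx{d}{-b}{-c}{a}$, so that $\ol\beta\beta=\det(\beta)I$. By the definition of the action $u\cdot\beta=\ol\beta u\beta$ and cyclicity of the trace,
\[
\Phi(u\cdot\beta)(x)=\tr\!\left(\ol\beta u\beta\smtx{x}{-x^2}{1}{-x}\right)=\tr\!\left(u\,\beta\binom{x}{1}\binom{1\ -x}{}\ol\beta\right).
\]
Now $\beta\binom{x}{1}=\binom{ax+b}{cx+d}$ and $\binom{1\ -x}{}\ol\beta=\bigl(cx+d,\,-(ax+b)\bigr)$, so setting $y=(ax+b)/(cx+d)$ the product $\beta\binom{x}{1}\binom{1\ -x}{}\ol\beta$ equals $(cx+d)^{2}\binom{y}{1}\binom{1\ -y}{}=(cx+d)^{2}\smtx{y}{-y^2}{1}{-y}$.

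Pulling the scalar $(cx+d)^{2}$ out of the trace yields
\[
\Phi(u\cdot\beta)(x)=(cx+d)^{2}P_u\!\left(\frac{ax+b}{cx+d}\right)=\bigl(P_u\cdot\beta\bigr)(x)=\bigl(\Phi(u)\cdot\beta\bigr)(x),
\]
which is precisely the defining formula for the weight-$2$ right action on $\cP_2$. Combined with the bijectivity established first, this shows $\Phi$ is an isomorphism of right $\GL_2(\QQ_p)$-modules. The only mildly delicate step is the rank-one factorization that lets one conjugate by $\beta$ and $\ol\beta$ simultaneously; once it is spotted, the proof is essentially forced.
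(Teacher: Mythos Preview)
Your proof is correct. The paper does not actually supply a proof of this lemma; it states it and moves on, having already recorded in the definition of $P_u$ the rank-one factorization $\smat{x&-x^2\\1&-x}=\smat{x\\1}\smat{1&-x}$ that you exploit. So there is no ``paper's approach'' to compare against beyond that hint, and your argument is exactly the verification the reader is implicitly asked to supply: check bijectivity on a basis, then use cyclicity of the trace together with $\beta\smat{x\\1}=(cx+d)\smat{y\\1}$ and $\smat{1&-x}\ol\beta=(cx+d)\smat{1&-y}$ to pull out the automorphy factor $(cx+d)^2$.
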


On $\cU$ there is a pairing defined by
\[
\langle u,v\rangle\dfn -\tr(u\overline{v}).
\]
This induces a pairing on $\cP_2$ by transport of structure, and on the dual $V_2$ of $\cP_2$ by canonically identifying $\cP_2$ with $V_2$ using the pairing itself. Unwinding the definitions, we can prove the following formula:

\begin{lemma}
Take as basis for $V_2$ the linear forms $\{\omega_i\}_{0\leq i\leq 2}$, dual to the basis $\{1,x,x^2\}$ of $\cP_2$. Then the pairing $\langle\cdot,\cdot\rangle$ on $V_2$ is given by:
\[
\langle a\omega_0+b\omega_1+c\omega_2,a'\omega_0+b'\omega_1+c'\omega_2\rangle=2bb'-a'c-ac'.
\]
\end{lemma}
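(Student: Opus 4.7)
The plan is to unwind every definition in sight and compute directly in coordinates. First I would parametrize a trace-zero matrix as $u = \smtx{a}{b}{c}{-a}$, giving coordinates on $\cU \cong \QQ_p^3$. A direct $2\times 2$ matrix multiplication shows
\[
\Phi(u) = P_u(x) = -cx^2 + 2ax + b,
\]
so the isomorphism $\Phi\colon \cU \to \cP_2$ identifies $(a,b,c)$ with the coefficient tuple $(\alpha_0, \alpha_1, \alpha_2) = (b, 2a, -c)$ of $P_u$.

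Next I would compute the pairing on $\cU$ in these coordinates. For $v = \smtx{a'}{b'}{c'}{-a'}$, one has $\bar v = \smtx{-a'}{-b'}{-c'}{a'}$, and a routine trace calculation gives
\[
\langle u, v\rangle = -\tr(u\bar v) = 2aa' + bc' + b'c.
\]
Using $a = \alpha_1/2$, $b = \alpha_0$, $c = -\alpha_2$, the transported pairing on $\cP_2$ becomes
\[
B(P, P') = \tfrac{1}{2}\alpha_1\alpha_1' - \alpha_0\alpha_2' - \alpha_2\alpha_0'.
\]
In particular $B$ is nondegenerate, so it induces an isomorphism $\sigma\colon \cP_2 \to V_2$, $P \mapsto B(P, \cdot)$, and the pairing on $V_2$ is by definition $\langle \omega, \omega'\rangle = B(\sigma^{-1}\omega, \sigma^{-1}\omega')$.

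To finish, I would invert $\sigma$ explicitly. Pairing each monomial with an arbitrary $Q$ via the formula for $B$ and reading off the dual-basis coefficients yields
\[
\sigma(1) = -\omega_2, \qquad \sigma(x) = \tfrac{1}{2}\omega_1, \qquad \sigma(x^2) = -\omega_0,
\]
hence $\sigma^{-1}(a\omega_0 + b\omega_1 + c\omega_2) = -a x^2 + 2b x - c$, i.e.\ $(\alpha_0,\alpha_1,\alpha_2) = (-c, 2b, -a)$. Plugging this into the formula for $B$ gives
\[
\langle a\omega_0+b\omega_1+c\omega_2,\, a'\omega_0+b'\omega_1+c'\omega_2\rangle = \tfrac{1}{2}(2b)(2b') - (-c)(-a') - (-a)(-c') = 2bb' - a'c - ac',
\]
as claimed. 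The whole proof is linear algebra; the only subtlety is keeping track of signs when inverting $\sigma$, and I would double-check those by testing the formula on the diagonal entries $\langle \omega_i, \omega_i\rangle$ before committing to the answer.
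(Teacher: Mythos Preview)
Your proof is correct and follows exactly the approach the paper intends: the paper merely writes ``unwinding the definitions, we can prove the following formula'' without supplying details, and your computation is precisely that unwinding. All the intermediate steps---the formula for $P_u$, the pairing on $\cU$, the transport to $\cP_2$, and the inversion of $\sigma$---check out.
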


The pairing $\langle\cdot,\cdot\rangle$ induces a perfect symmetric pairing on $\Sym^n V_2=V_n$ given by the formula:
\[
\langle v_1\cdots v_n,v'_1\cdots v'_n\rangle\dfn \frac{1}{n!}\sum_{\sigma\in \fS_n} \langle v_1,v'_{\sigma(1)}\rangle\cdots\langle v_n,v'_{\sigma(n)}\rangle.
\]

We define a map $\epsilon\colon C^1(V_{\QQ_p^\ur})^\Gamma\to H^1(\Gamma,V_{\QQ_p^\ur})$, as follows: given a $1$-cocycle $f\in C^1(V_{\QQ_p^\ur})^\Gamma$, the element $\epsilon(f)$ is defined as the cohomology class of the $1$-cocyle
\[
\gamma\mapsto \gamma F(\gamma^{-1}(\star)),
\]
where $\star\in\fV(\cT)$ is a choice of a vertex of $\cT$, and $F\in C^0(V_{\QQ_p^\ur})$ satisfies $\partial F=f$ and $F(\star)=0$. One easily checks that this definition does not depend on the choice of the vertex $\star$.

\begin{prop}
The map $\epsilon$ induces an isomorphism:
\[
\epsilon\colon C^1(V_{\QQ_p^\ur})^\Gamma/C^0(V_{\QQ_p^\ur})^\Gamma\to H^1(\Gamma,V_{\QQ_p^\ur}).
\]
\end{prop}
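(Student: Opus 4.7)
The plan is to realize $\epsilon$ as the connecting homomorphism in the long exact sequence of group cohomology attached to a short exact sequence of $\Gamma$-modules coming from the tree. Since $\cT$ is connected and simply connected (it is a tree, hence contractible), the augmented chain complex
\[
0 \to V \to C^0(V) \tto{\partial} C^1(V) \to 0
\]
is exact, where $V$ embeds as constant $V$-valued functions on $\fV(\cT)$ and $\partial F$ is the $1$-cocycle sending $e$ to $F(t(e)) - F(o(e))$. This is a short exact sequence of $\QQ_p^\ur[\Gamma]$-modules, because $\Gamma$ acts on the tree and the boundary map is equivariant. Taking $\Gamma$-cohomology gives
\[
0 \to V^\Gamma \to C^0(V)^\Gamma \tto{\partial} C^1(V)^\Gamma \tto{\delta} H^1(\Gamma, V) \to H^1(\Gamma, C^0(V)) \to \cdots
\]
so once we show the last term vanishes, $\delta$ is surjective with kernel $\partial(C^0(V)^\Gamma)$, yielding the desired isomorphism.

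Next I would check that $\epsilon = \delta$. Given a $1$-cocycle $f \in C^1(V)^\Gamma$, surjectivity of $\partial$ produces $F \in C^0(V)$ with $\partial F = f$, normalized by $F(\star) = 0$ for a fixed base vertex $\star$. The connecting homomorphism sends $f$ to the cohomology class of $\gamma \mapsto \gamma \cdot F - F$; since this lies in $\ker \partial = V$ and is determined by its value at any vertex, one can evaluate at $\star$ to get $(\gamma F - F)(\star) = \gamma F(\gamma^{-1} \star) - F(\star) = \gamma F(\gamma^{-1}(\star))$, matching the formula in the statement. Independence of the choice of $\star$ follows from the fact that a different base point changes $F$ by a constant, and hence only modifies the cocycle by a coboundary.

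The core technical step is the vanishing $H^1(\Gamma, C^0(V)) = 0$, and analogously for $C^1(V)$ if needed. Decompose $\fV(\cT) = \bigsqcup_i \Gamma / \Gamma_{v_i}$ into $\Gamma$-orbits; since $\Gamma \backslash \cT$ is finite (as $\Gamma$ is cocompact), this is a finite sum, and the vertex stabilizers $\Gamma_{v_i}$ are compact in $\SL_2(\QQ_p)$ and discrete in $\Gamma$, hence finite. Consequently
\[
C^0(V) \;=\; \prod_{v \in \fV(\cT)} V \;\cong\; \prod_{i} \mathrm{Coind}^\Gamma_{\Gamma_{v_i}}(V),
\]
and by Shapiro's lemma $H^1(\Gamma, \mathrm{Coind}^\Gamma_{\Gamma_{v_i}}(V)) = H^1(\Gamma_{v_i}, V)$. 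Since $\Gamma_{v_i}$ is finite and $V$ is a $\QQ_p^\ur$-vector space (so cohomology of finite groups with values in such modules vanishes in positive degrees by an averaging argument), each of these $H^1$'s is zero. The main subtlety here is that the decomposition involves a genuine product rather than a direct sum, but this is harmless: group cohomology commutes with products over a fixed group (a product of acyclic resolutions is acyclic), so $H^1$ of the product is the product of the $H^1$'s, which vanishes. Combining this with the long exact sequence above completes the proof.
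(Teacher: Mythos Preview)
Your argument is correct and follows the same route as the paper: both identify $\epsilon$ with the connecting homomorphism in the long exact sequence attached to $0\to V\to C^0(V)\to C^1(V)\to 0$ and then kill $H^1(\Gamma,C^0(V))$. The only difference is in how that vanishing is justified: the paper invokes the standing hypothesis that $\Gamma$ is torsion-free, so vertex stabilizers are trivial and $C^0(V)$ is a product of modules coinduced from the trivial subgroup, hence acyclic; you instead allow finite stabilizers and appeal to Shapiro's lemma together with the vanishing of positive-degree cohomology of finite groups in characteristic zero. Your version is thus a mild generalization, and your remark about cohomology commuting with products is a useful point the paper leaves implicit.
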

\begin{proof}
  Consider the short exact sequence
\[
0\to V_{\QQ_p^\ur}\to C^0(V_{\QQ_p^\ur})\tto{\partial} C^1(V_{\QQ_p^\ur})\to 0.
\]
The map $\epsilon$ is the map induced from the connecting homomorphism $\delta$ in the $\Gamma$-cohomology long exact sequence:
\[
0\to C^0(V_{\QQ_p^\ur})^\Gamma\tto{\partial} C^1(V_{\QQ_p^\ur})^\Gamma\tto{\delta} H^1(\Gamma,V_{\QQ_p^\ur})\to H^1(\Gamma,C^0(V_{\QQ_p^\ur}))\to\cdots
\]
Since $\Gamma$ is torsion-free, $H^1(\Gamma,C^0(V_{\QQ_p^\ur}))=0$, and the result follows.
\end{proof}

Note that there is a canonical isomorphism
\[
\Char^1(V_{\QQ_p^\ur})^\Gamma\cong C^1(V_{\QQ_p^\ur})^\Gamma/C^0(V_{\QQ_p^\ur})^\Gamma,
\]
and we will identify these two spaces from now on. Let $\gamma\mapsto f_\gamma$ be a $1$-cocycle on $\Gamma$. The group $C^0(V_{\QQ_p^\ur})$ is $\Gamma$-acyclic, so that there is a $0$-harmonic cocycle $F\in C^0(V_{\QQ_p^\ur})$ satisfying, for all $\gamma\in\Gamma$,
\[
j(f_\gamma)=\gamma F - F.
\]
Consider then $\partial(F)\in C^1(V_{\QQ_p^\ur})$. It is fixed by $\Gamma$, since:
\[
(\gamma\partial(F))-\partial(F)=\partial(\gamma F-F)=\partial(j(f_\gamma))=0.
\]
Then the class of the $1$-hypercocycle given by $(\{0\}_v;\{F(o(e))-F(t(e))\}_{e})$ is an element of $\Hdr^1(X_\Gamma,E(V))$. We thus obtain an injection:
\[
\iota\colon H^1(\Gamma,V_{\QQ_p^\ur})\to \Hdr^1(X_\Gamma,E(V)).
\]

Next we construct a map  $I\colon \Hdr^1(X_\Gamma,E(V))\to \Char^1(V_{\QQ_p^\ur})^\Gamma$ which is due to Schneider. It is called ``Schneider integration'' in \cite{iovita2003dpa}, \cite{deshalit1989eca} and \cite{deshalit:civ}. Denote by $\Omega^{\text{II}}_{X^\an_\Gamma}$ the space of meromorphic differentials of the second kind on $X^\an_\Gamma$. There is a map $\Omega^{\text{II}}_{X^\an_\Gamma}\tns[\cO_{X_\Gamma}] V\to C^1(V_{\QQ_p^\ur})$:
\[
\omega\mapsto \left(e\mapsto \res_e(\omega)\right),
\]
and this map induces $I$. The residue theorem implies that the image of $I$ lies in $\Char^1(V_{\QQ_p^\ur})^\Gamma$.
\begin{lemma}[de Shalit]
\label{lemma:deshalit}
Suppose that $\Gamma$ is arithmetic. Then the sequence
\begin{equation}
\label{eqn:deshalit}
\xymatrix@R3pt{
0\ar[r]&H^1(\Gamma,V_{\QQ_p^\ur})\ar[r]^-{\iota}&\Hdr^1(X,E(V))\ar[r]^-{I}&\Char^1(V_{\QQ_p^\ur})^\Gamma\ar[r]&0
}
\end{equation}
is exact.
\end{lemma}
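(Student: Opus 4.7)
Since $\cH_p$ is a Stein space, coherent analytic sheaves on it have vanishing higher cohomology, so the Leray spectral sequence for the covering $\cH_p \to X_\Gamma$ collapses to give an isomorphism
\[
\Hdr^k(X_\Gamma, E(V)) \cong \bbH^k(\Gamma, \Omega^\bullet \otimes V),
\]
where $\Omega^\bullet = [\cO(\cH_p) \to \Omega^1(\cH_p)]$ is the global de Rham complex on $\cH_p$, viewed as a complex of $\Gamma$-modules. The plan is to analyze the hyper-cohomology spectral sequence
\[
E_2^{p,q} = H^p\bigl(\Gamma,\, H^q(\Omega^\bullet \otimes V)\bigr) \Longrightarrow \bbH^{p+q}(\Gamma, \Omega^\bullet \otimes V),
\]
reading off the lemma from its five-term exact sequence.

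The first task is to identify the relevant cohomology of $\Omega^\bullet \otimes V$. Connectedness of $\cH_p$ immediately gives $H^0(\Omega^\bullet \otimes V) = V$ in degree zero. The crucial input in degree one is the theorem of Schneider--de Shalit on differentials of the second kind on $\cH_p$: the annular-residue map
\[
\omega \longmapsto \bigl(e \mapsto \res_e(\omega)\bigr)
\]
induces an isomorphism $\Omega^1(\cH_p) \otimes V / d\bigl(\cO(\cH_p) \otimes V\bigr) \tto{\sim} \Char^1(V_{\QQ_p^\ur})$. Feeding this into the spectral sequence yields $E_2^{p,0} = H^p(\Gamma, V)$ and $E_2^{p,1} = H^p(\Gamma, \Char^1(V))$, and the five-term exact sequence reads
\[
0 \to H^1(\Gamma, V) \tto{\iota} \Hdr^1(X_\Gamma, E(V)) \tto{I} \Char^1(V_{\QQ_p^\ur})^\Gamma \tto{d_2} H^2(\Gamma, V) \to \Hdr^2(X_\Gamma, E(V)).
\]
Unwinding the definitions, the edge map $E_2^{1,0} \to \bbH^1$ is precisely the map $\iota$ constructed before the lemma (inclusion of classes pulled back from the constant subcomplex via the section $F$ with $\partial F = f$), and the edge map $\bbH^1 \to E_2^{0,1}$ is exactly Schneider integration $I$.

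Given this identification, the exactness asserted by the lemma is equivalent to two statements: injectivity of $\iota$ (automatic from the five-term sequence) and the vanishing of the differential $d_2 \colon \Char^1(V)^\Gamma \to H^2(\Gamma, V)$. The latter is the main obstacle and is where the arithmeticity of $\Gamma$ enters. To prove $d_2 = 0$, I would show that the map $H^2(\Gamma, V) \to \Hdr^2(X_\Gamma, E(V))$ appearing at the tail of the five-term sequence is injective. Since $\Gamma$ is cocompact and torsion-free, $X_\Gamma$ is a smooth projective Mumford curve of some genus $g \geq 2$, so $\Gamma$ is a surface group of cohomological dimension $2$ and the pairing on $V = V_n$ descended from the one defined just before the lemma endows both $H^2(\Gamma, V)$ and $\Hdr^2(X_\Gamma, E(V))$ with Poincaré duality structures. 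Comparing these via the duality pairing and using that $V$ admits a $\Gamma$-equivariant perfect pairing with itself shows that $H^2(\Gamma, V)$ and $\Hdr^2(X_\Gamma, E(V))$ have the same dimension and that the natural map between them is a dimension-preserving injection, hence $d_2 = 0$. This degenerates the relevant part of the spectral sequence at $E_2$ and yields the exact sequence~\eqref{eqn:deshalit}.
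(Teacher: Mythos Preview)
Your overall framework is the right one and is essentially what underlies de~Shalit's argument: compute $\Hdr^1(X_\Gamma,E(V))$ as group hypercohomology, invoke the Schneider residue isomorphism $\Omega^1(\cH_p)\otimes V/d(\cO(\cH_p)\otimes V)\cong \Char^1(V_{\QQ_p^\ur})$ to identify $H^1$ of the de~Rham complex, and read off the statement from the five-term exact sequence. The identification of the edge maps with $\iota$ and $I$ is also correct.

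The error is in your treatment of $d_2$. You assert that $\Gamma$ is a surface group of cohomological dimension $2$ and then propose a Poincar\'e-duality dimension count to show $H^2(\Gamma,V)\hookrightarrow\Hdr^2(X_\Gamma,E(V))$. This is based on a false premise carried over from the archimedean picture. In the $p$-adic setting a torsion-free discrete cocompact $\Gamma\subset\SL_2(\QQ_p)$ acts freely on the Bruhat--Tits tree $\cT$; since $\cT$ is contractible, $\Gamma$ is a \emph{free} group (a Schottky group), hence of cohomological dimension $1$. The paper itself notes this when it speaks of choosing ``a free normal subgroup $\Gamma'\subset\Gamma$ of finite index.'' Consequently $H^2(\Gamma,V)=0$ outright, and $d_2$ vanishes for the trivial reason that its target is zero. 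No duality argument is needed (and the one you sketch would not go through as written, since neither side has the surface-group structure you invoke). If $\Gamma$ has torsion, one passes to a free finite-index $\Gamma'$, obtains the exact sequence there, and takes $\Gamma/\Gamma'$-invariants; this is exact in characteristic $0$, recovering the sequence for $\Gamma$.

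So: keep the spectral-sequence skeleton, replace the surface-group paragraph by the one-line observation that $\Gamma$ is (virtually) free and hence $H^2(\Gamma,V)=0$, and the proof is complete.
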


There is a retraction $P$ of $\iota$, given by Coleman integration. This assigns to $(\omega,f_\gamma)$ the $1$-cocycle
\[
\gamma\mapsto f_\gamma+\gamma F_\omega-F_\omega,
\]
where $F_\omega$ is a Coleman primitive for $\omega$ as in Theorem~\ref{thm:coleman-integral}. Note that the $V_{\QQ_p^\ur}$-valued function $\gamma F_\omega-F_\omega$ is constant, so that we can think of it as a well-defined element of $V_{\QQ_p^\ur}$.

The splitting $P$ thus defines actions of Frobenius on the left and right terms of the exact sequence~\eqref{eqn:deshalit}, as follows: there is a natural action $\phi_1$ of Frobenius on $H^1(\Gamma,V_{\QQ_p^\ur})$. Define an action $\phi_2$ on $\Char^1(V_{\QQ_p^\ur})^\Gamma$ as $\phi_2\dfn p(\epsilon^{-1}\circ\phi_1\circ\epsilon)$, so that the following equality holds:
\[
\epsilon\phi_2=p\phi_1\epsilon.
\]

We have now all the maps needed in the definition of the Frobenius and monodromy operators. Define first $N$ to be the composition $\iota\circ (-\epsilon)\circ I$. Since $I\circ\iota=0$ it follows that $N^2=0$. Actually, Lemma~\ref{lemma:deshalit} implies that $\ker N=\img N$. Let $T$ be the right-inverse to $I$ corresponding to $P$:
\[
\xymatrix{
0\ar[r]& H^1(\Gamma,V_{\QQ_p^\ur})\ar[r]^-{\iota}\ar@(ur,ul)[]_{\phi_1}& \Hdr^1(X_\Gamma,E(V))\ar[r]^-{I}\ar@/_1.5pc/@{-->}[l]_{P}& C^1(V_{\QQ_p^\ur})^\Gamma/C^0(V_{\QQ_p^\ur})^\Gamma\ar[r]\ar@/_1.5pc/@{-->}[l]_{T}\ar@(ur,ul)[]_{\phi_2}& 0.
}
\]
Define the Frobenius operator $\Phi$ on $\Hdr^1(X_\Gamma,E(V))$ as:
\[
\Phi(\omega)\dfn \iota\phi_1(P\omega)+ T(\phi_2(I\omega)).
\]
It can easily be checked that this definition satisfies $N\Phi=p\Phi N$, and that $\Phi$ is the unique such action which is compatible with the maps $P$ and $\iota$.

Let $S$ be a finite set of points of $X_\Gamma$, and let  $U=X_\Gamma\setminus S$ be the open subscheme obtained by removing the points in $S$. The space $\Hdr^1(U,E(V))$ is identified with the space of $V$-valued differential forms on $X_\Gamma^\an$ which are of the second kind when restricted to $U$. The monodromy is defined in the same way as before. The Frobenius is defined so that the Gysin sequence
\begin{equation}
\label{eqn:gysin-seq}
\xymatrix{
0\ar[r]& \Hdr^1(X_\Gamma,E(V))\ar[r]& \Hdr^1(U,E(V))\ar[rr]^{\oplus_{x\in S}\res_x}&& \bigoplus_{x\in S} V_{\Psi_x}[1]
}
\end{equation}
is a sequence of $(\phi,N)$-modules, and such that $P$ is compatible with the Frobenii.

\subsection{A special case of interest}
\label{sec:specialcase}
In \cite{iovita2003dpa} the authors apply the previous constructions to a filtered isocrystal on $\cH_p$ denoted by $\cE(M_2)$. It is shown in \cite[Lemma~5.10]{coleman2003hss} that $\cE(M_2)$ is regular, and therefore one can define a structure of a filtered $(\phi,N)$-module on its cohomology groups. Here we make the construction explicit. Consider first the $\QQ_p$-vector space of $2\times 2$ matrices $M_2$. Define two commuting left actions of $\GL_2$ on $M_2$ by:\begin{align*}
\rho_1(A)(B)&\dfn AB & \rho_2(A)(B)&\dfn B\overline A,
\end{align*}
for $A\in\GL_2$ and $B\in M_2$. The matrix $\overline A$ is such that $A\overline A=\det A$. This gives a representation:
\[
(M_2,\rho_1,\rho_2)\in \Rep_{\QQ_p}(\GL_2\times\GL_2).
\]
The isocrystal $\cE(M_2)$ is constructed as an isocrystal on the canonical formal model $\widehat\cH$ over $\ZZ_p^{\ur}$ of $\cH_p$. We are also interested in its fibers  over $\QQ_{p^2}$-rational points $\Psi\in \Hom(\QQ_{p^2},M_2(\QQ_p))$. Let $\cO_{\widehat\cH}$ be the isocrystal attached to $\widehat \cH$. As an isocrystal $\cE(M_2)$ is:
\[
M_2(\QQ_p)\tns[\QQ_p] \cO_{\widehat\cH}.
\]
We need to define the Frobenius and filtration. First, let $\phi$ be the action on $M_2(\QQ_p)$:
\[
\smtx abcd \mapsto  \smtx abcd \smtx 0{-p}{-1}0 =\smtx{-b}{-pa}{-d}{-pc}.
\]
On $\cO_{\widehat\cH}$ there is a Frobenius action $\Phi_{\cO_{\widehat\cH}}$ as well. We define the Frobenius on the tensor product through these two actions.

The filtration on $M_2$ is given in degree $1$ by:
\[
F^1 M_2=\left\{\smtx{zf(z)}{zg(z)}{f(z)}{g(z)}\stt f,g\in\cO_{\cH_p}\right\}.
\]

We end by describing the stalks of $\cE(M_2)$. For each point $\Psi\in \Hom(\QQ_{p^2},M_2(\QQ_p))$, the stalk $\cE(M_2)_\Psi$ is, as a $\QQ^\ur_p$-vector space, the space $M_2(\QQ^\ur_p)$ of $2\times 2$ matrices. The Frobenius acts by $\phi$ on $M_2$ and by $\sigma$ on $\QQ_p^\ur$. That is:
\[
\Phi\left(\smtx abcd\right)=\smtx{-\sigma(b)}{-p\sigma(a)}{-\sigma(d)}{-p\sigma(c)}.
\]

To describe the filtration on $M_2(\QQ_p^\ur))$, we first consider for each $j$ the subspace $V_j$:
\[
V_j\dfn \{A\in M_2(\QQ_p^\ur)\stt \Psi(x)A=x^j\sigma(x)^{1-j}A, \forall x\in\QQ_{p^2}\}.
\]
The filtration on $M_2(\QQ_p^\ur)$ is:
\[
\Fil^i_{\Psi}M_2(\QQ_p^\ur)\dfn \bigoplus_{j\geq i} V_j.
\]
This is an exhaustive and separated filtration on $M_2(\QQ_p^\ur)$. The monodromy is trivial.

\subsection{A pairing on \texorpdfstring{$\protect\Hdr^1(X_\Gamma,E(V))$}{Hdr1(X,E(V))}}
\label{subsection:pairing}
Let $V$ be a finite-dimensional representation of $\Gamma$ over $K$ endowed with a $\Gamma$-invariant perfect pairing $\langle\cdot,\cdot,\rangle_V$. We first describe a pairing $\langle\cdot,\cdot\rangle_\Gamma$:
\[
\langle\cdot,\cdot\rangle_\Gamma\colon \Char^1(V)^\Gamma\tns H^1(\Gamma,V)\to K,
\]
given as follows: choose a free subgroup $\Gamma'\subset\Gamma$ of finite index, and let $\fF$ be a good fundamental domain for $\Gamma'$ as in~\cite[Section~2.5]{deshalit1989eca}. Let $b_1,\ldots,b_g,c_1,\ldots,c_g$ be the free edges for $\fF$. For and $f\in\Char^1(V)^\Gamma$ and $[z]\in H^1(\Gamma,V)$, the pairing is given by the formula:
\[
\langle [z],f\rangle_{\Gamma}=\frac{1}{[\Gamma\colon\Gamma']}\sum_{i=1}^g \langle z(\gamma_i),f(c_i)\rangle_{V}.
\]

The previous pairing induces a pairing on $\Hdr^1(X_\Gamma,E(V))$, and we are interested in a formula for it, which we now proceed to describe. Let $x,y\in\Hdr^1(X_\Gamma,E(V))$. E.~de~Shalit computed first a formula for this pairing in~\cite{deshalit1989eca} and~\cite{deshalitfcp}, and Iovita-Spie\ss{} proved it in a more conceptual way which allowed for a generalization, in \cite{iovita2003dpa}. They obtained the equality:
\begin{equation}
\label{eqn:cupproduct}
\langle x,y\rangle_{X_\Gamma}=\langle P(x),I(y)\rangle_\Gamma-\langle I(x),P(y)\rangle_\Gamma.
\end{equation}

\subsection{Pairings between \texorpdfstring{$\Hdr^1(U,E(V))$}{Hdr1(U,E(V))} and \texorpdfstring{$\Hdrc^1(U,E(V))$}{Hdrc1(U,E(V))}}
In this subsection we make explicit some of the constructions carried out in~\cite[Appendix]{iovita2003dpa}.

Let $U=X\setminus\{x\}$, where $x$ is a  closed point of $X$ defined over the base field $K$. Write $j\colon U\to X=X_\Gamma$ for the canonical inclusion. Let $z$ be a lift of $x$ to $\cH_p(K)$, taken inside a good fundamental domain $\cF$. We assume that the stabilizer of $z$ under the action of $\Gamma$ is trivial.
Let $\Ind^\Gamma(V)$ be the $\Gamma$-representation given by $\Maps(\Gamma,V)$, with $\Gamma$-action:
\[
(\gamma\cdot f)(\tau)\dfn \gamma f (\gamma^{-1}\tau).
\]
Let $\ad\colon V\to\Ind^\Gamma(V)$ be defined as the constant map: $\ad(v)(\tau)\dfn v$.
Consider the complex $\cK^\bullet(V)$, concentrated on degrees $0$ and $1$, defined as:
\[
\cK^\bullet(V):\qquad V\tto{\ad} \Ind^\Gamma(V).
\]
Consider also the complex $C^\bullet(V)$ defined as follows:
\[
C^\bullet(V):\qquad \cO_{\cH_p}(\cH_p)\tns V\tto{(d,\ev_z)} \Omega^1(\cH_p)\tns V\oplus \Ind^\Gamma(V).
\]

\begin{df}
The \emphh{cohomology with compact support} on $U$ with coefficients in $E(V)$ is the hypercohomology group:
\[
\Hdrc^1(U,E(V))\cong \bbH^1(\Gamma,C^\bullet(V)).
\]
\end{df}

\begin{fact}

The inclusion $j$ induces natural maps:
\[
j_*\colon \Hdrc^1(U,E(V))\to\Hdr^1(X,E(V))
\]
and
\[
j^*\colon\Hdr^1(X,E(V))\to\Hdr^1(U,E(V)).
\]
There is a pairing $\langle\cdot,\cdot\rangle_U$, on:
\[
\Hdrc^1(U,E(V))\times\Hdr^1(U,E(V))\to K,
\]
induced from the cup-product. It satisfies:
\[
\langle j_* y_1,y_2\rangle_X=\langle y_1,j^*y_2\rangle_U.
\]
\end{fact}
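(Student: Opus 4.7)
The plan is to prove all three assertions by unwinding them at the level of the group-hypercohomology complexes that compute the cohomology groups in question, exploiting that everything here has an explicit description using $\Gamma$, $\cH_p$ and the point $z$.

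First, for the two maps $j_*$ and $j^*$. The compact-support complex $C^\bullet(V)$ used to define $\Hdrc^1(U,E(V))$ has a canonical morphism to the de Rham complex $\Omega^\bullet\tns V$ that computes $\Hdr^1(X,E(V))$: it is the identity in degree $0$, and the projection $\Omega^1(\cH_p)\tns V\oplus \Ind^\Gamma(V)\to \Omega^1(\cH_p)\tns V$ in degree $1$. Since $d=\pr_1\circ (d,\ev_z)$, this is a map of complexes, so it induces $j_*$ on $\bbH^1(\Gamma,-)$. For $j^*$, one uses the description of $\Hdr^1(U,E(V))$ as $V$-valued differentials of the second kind on $X_\Gamma^\an$ which are holomorphic away from $x$, and notes that any global regular pair $(\omega,f_\gamma)$ representing an element of $\Hdr^1(X_\Gamma,E(V))$ is, a fortiori, of the second kind on $U$; this is the restriction $j^*$, defined at the level of hypercocycles.

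Second, for the pairing $\langle\cdot,\cdot\rangle_U$. One constructs a pairing of complexes
\[
C^\bullet(V)\tns D^\bullet(V)\to T^\bullet,
\]
where $D^\bullet(V)$ is the complex computing $\Hdr^1(U,E(V))$ by $V$-valued differentials on $\cH_p$ with a controlled singularity at $z$ (equivalently a mapping cone), and where $T^\bullet$ is a resolution whose $\bbH^2(\Gamma,T^\bullet)$ is canonically identified with $K$ using the $\Gamma$-invariant pairing $\langle\cdot,\cdot\rangle_V$ and the trace on a fundamental domain (in the style of de Shalit's formula recalled in Section~\ref{subsection:pairing}). The resulting cup-product on group hypercohomology yields $\langle\cdot,\cdot\rangle_U$; the local term at $z$ coming from the $\Ind^\Gamma(V)$-factor of $C^\bullet(V)$ pairs against the residue of the singular differential at $z$, which is exactly what makes the pairing nondegenerate.

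Third, for the projection formula $\langle j_*y_1,y_2\rangle_X=\langle y_1,j^*y_2\rangle_U$. This is precisely the adjointness of $j_*$ and $j^*$ with respect to the two cup-product pairings, and reduces to the following commutative diagram of morphisms of complexes:
\[
\xymatrix@C3em{
C^\bullet(V)\tns (\Omega^\bullet\tns V)\ar[r]^-{j_*\tns \id}\ar[d]_-{\id\tns j^*}&(\Omega^\bullet\tns V)\tns(\Omega^\bullet\tns V)\ar[d]\\
C^\bullet(V)\tns D^\bullet(V)\ar[r]&T^\bullet.
}
\]
Both compositions send a pair to the same element of $\bbH^2(\Gamma,T^\bullet)=K$, because on hypercocycle representatives $(v,\eta,\xi)\in C^\bullet(V)$ and $(\omega,f_\gamma)\in\Omega^\bullet\tns V$ both sides unwind to the sum over a fundamental domain of local cup-products plus the evaluation term $\langle\xi(\gamma),\omega\rangle$ at $z$, which matches up term by term.

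The principal obstacle is the correct choice of the complex $D^\bullet(V)$ computing $\Hdr^1(U,E(V))$ so that it pairs naturally with $C^\bullet(V)$, and the verification that the cup-product at the level of $1$-hypercocycles produces exactly the pairing formulas of~\cite{iovita2003dpa}; once the cocycle calculus is in place, the projection formula is essentially a bookkeeping check.
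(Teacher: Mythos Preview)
The paper does not supply a proof of this statement: it is labeled a \emph{Fact} and left unproven, with the surrounding discussion deferring the explicit constructions of the complexes, the maps, and the pairing to the Appendix of~\cite{iovita2003dpa}. So there is no ``paper's own proof'' to compare against in the strict sense.

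That said, your proposal is exactly the right shape and is essentially what one finds in that reference. Defining $j_*$ via the projection $C^\bullet(V)\to\Omega^\bullet\tns V$ that forgets the $\Ind^\Gamma(V)$-summand, and $j^*$ by regarding regular hypercocycles as log-regular ones, is correct. The pairing $\langle\cdot,\cdot\rangle_U$ is indeed built as a cup product of complexes landing in a trace complex, and the projection formula is then a formal adjointness coming from a commutative square of complex maps, just as you write. Your self-identified ``principal obstacle''---pinning down $D^\bullet(V)$ so that it pairs cleanly with $C^\bullet(V)$ and recovers the explicit formula of Proposition~\ref{prop:formula-for-pairing}---is the only real content; in the reference this is handled by taking $D^\bullet(V)$ to be the log-de Rham complex $\cO_{\cH_p}(\cH_p)\tns V\to\Omega^1(\cH_p)(\log|z|)\tns V$ and pairing the $\Ind^\Gamma(V)$-component of $C^1(V)$ against the residue at the $\Gamma$-orbit of $z$. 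Once that is in place, the diagram you drew commutes on the nose and the adjointness follows.
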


The exact triangle:
\[
\cK^\bullet(V)\to C^\bullet(V)\to \Char^1(V)[-1]\to\cK^\bullet(V)[1]
\]
induces a short exact sequence:
\[
0\to \bbH^1(\Gamma,\cK^\bullet(V))\tto{\iota_{U,c}} \Hdrc^1(U,E(V))\tto{I_{U,c}} \Char^1(V)^\Gamma\to 0.
\]
The splitting $P_{U,c}$ is defined as follows. Fix a branch of the $p$-adic logarithm. Let $\cF(V)$ be the subspace of those $V$-valued locally-analytic functions on $\cH_p$ which are primitives of elements of $\Omega^1(\cH_p)\tns V$. There is an exact sequence:
\[
0\to V\to \cF(V)\tto{d} \Omega^1(\cH_p)\tns V\to 0,
\]
and one immediately checks that this implies that the complex
\[
\cF(V)\to \Omega^1(\cH_p)\tns V\oplus \Ind^\Gamma(V)
\]
is quasi-isomorphic to $\cK^\bullet$.
\[
P_{U,c}\colon \Hdrc^1(U,E(V))=\bbH^1(\Gamma,C^\bullet)\to \bbH^1(\Gamma,\cK^\bullet).
\]

We define now a $\Gamma$-module $C_U(V)$. There is a surjective map
\[
\delta\colon C^1(V)\to C^0(V),
\]
defined by:
\[
\delta(f)(v)\dfn\sum_{o(e)=v} f(e).
\]
Let $v_0\dfn \red(z)$, and define $\chi\colon\Ind^\Gamma(V)\to C^0(V)$ by:
\[
\chi(f)(v)\dfn
\begin{cases}
  f(\gamma)&\If v=\gamma v_0,\text{ for some }\gamma\in\Gamma,\\
0&\Else .
\end{cases}
\]
The $\Gamma$-module $C_U(\Gamma)$ is defined to be the kernel of the map:
\[
C^1(V)\bigoplus \Ind^\Gamma(V)\to C^0(V),
\]
mapping $(f,g)\mapsto \delta(f)-\chi(g)$. The map $I_U\colon \Hdr^1(U,E(V))\to C_U(V)^\Gamma$ is naturally induced from the map $\tilde I_U\colon \Omega^1(\cH_p)(\log(|z|))\tns V\to C_U(V)$, defined by:
\[
\tilde I_U(\omega)(e,\gamma)\dfn(\res_e(\omega),\res_{\gamma(z)}(\omega)).
\]
Also, the map $\iota_U$ is induced from the natural inclusion
\[
V\to\cO_{\cH_p}(\cH_p)(\log(|z|))\tns V.
\]
Finally, we define a splitting $P_U$ is defined by the same formula as the one defining $P$. We end this section by recalling the explicit description of the pairing
\[
\langle\cdot,\cdot\rangle_{\Gamma,U}\colon H^1(\Gamma,\cK^\bullet(V))\tns C_U(V)^\Gamma\to K.
\]

\begin{prop}[Iovita-Spie\ss{}]
\label{prop:formula-for-pairing}
  Let $x\in H^1(\Gamma,\cK^\bullet(V))$ be represented by  $(\zeta,f)$, such that
\[
\ad\circ \zeta=\partial(f),
\]
with $\zeta\in Z^1(\Gamma,V)$ a one-cocycle and $f\in \Ind^\Gamma(V)$ satisfying
\[
(\partial f)(\gamma)=\gamma f-f.
\]
Let $(g,g')\in C^1(V)\oplus\Ind^\Gamma(V)$ be an element in $C_U(V)^\Gamma$, so that $\delta(g)=\chi(g')$. Choose a free subgroup $\Gamma'\subset\Gamma$ of finite index, and let $\fF$ be a good fundamental domain for $\Gamma'$  as in~\cite[Section~2.5]{deshalit1989eca}. Let $b_1,\ldots,b_g,c_1,\ldots,c_g$ be the free edges for $\fF$. Then:
\[
\langle [(\zeta,f)],(g,g')\rangle_{\Gamma,U} =\frac{1}{[\Gamma\colon \Gamma']} \sum_{i=1}^g\langle \zeta(\gamma_i),g(c_i)\rangle+\langle f(1),g'(1)\rangle.
\]
\end{prop}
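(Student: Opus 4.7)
The plan is to deduce the formula by realizing $\langle\cdot,\cdot\rangle_{\Gamma,U}$ as an explicit cup product in group hypercohomology, following the same pattern used by Iovita--Spie{\ss} to justify Equation~\eqref{eqn:cupproduct} for $\Hdr^1(X_\Gamma,E(V))$. First I would write down a pairing of complexes of $\Gamma$-modules
\[
\cK^\bullet(V)\otimes C_U(V)\lto \cC^\bullet,
\]
where the right-hand side is a two-term complex whose first hypercohomology carries a canonical $\Gamma$-invariant trace to $K$ obtained from the $V$-pairing $\langle\cdot,\cdot\rangle_V$ and the fundamental class of the good fundamental domain $\fF$. Concretely, on the $V\otimes (C^1(V)\oplus \Ind^\Gamma(V))$-summand one pairs $v\otimes(g,g')\mapsto \bigl(e\mapsto \langle v,g(e)\rangle,\gamma\mapsto \langle v,g'(\gamma)\rangle\bigr)$, and on the $\Ind^\Gamma(V)\otimes (C^1(V)\oplus \Ind^\Gamma(V))$-summand one pairs componentwise via $\langle\cdot,\cdot\rangle_V$. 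The condition $\delta(g)=\chi(g')$ defining $C_U(V)$ is exactly what is needed for this to be a morphism of complexes.

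Next I would transport the induced cup product $H^1(\Gamma,\cK^\bullet(V))\otimes H^0(\Gamma,C_U(V))\to H^1(\Gamma,\cC^\bullet)$ through the bar resolution. A class $[(\zeta,f)]\in H^1(\Gamma,\cK^\bullet(V))$ is represented by a pair satisfying $\ad\circ\zeta=\partial f$, and a class $(g,g')\in C_U(V)^\Gamma$ is a $\Gamma$-fixed element; under the cup product they are sent to the sum of a $C^1(V)$-valued $1$-cocycle $\gamma\mapsto (e\mapsto \langle \zeta(\gamma),g(e)\rangle)$ and an $\Ind^\Gamma(V)$-valued contribution coming from pairing $f$ against $g'$. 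Applying the trace to $K$ then replaces $\Gamma$-cohomology with $\Gamma'$-cohomology (introducing the factor $1/[\Gamma\colon\Gamma']$) and, after standard manipulations with the boundary of $\fF$, evaluates the first piece on the free edges $c_i$ paired with generators $\gamma_i$ of $\Gamma'$, producing the sum $\sum_i\langle\zeta(\gamma_i),g(c_i)\rangle$. The $\Ind^\Gamma(V)$-contribution collapses, by the triviality of the stabilizer of $z$ inside $\fF$, to the single term $\langle f(1),g'(1)\rangle$.

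The main obstacle will be checking well-definedness and bookkeeping, namely: (i) showing that the right-hand side of the formula depends only on the class $[(\zeta,f)]$, which amounts to verifying that modifying $(\zeta,f)$ by a coboundary $(\partial v,\ad(v))$ changes the expression by $\langle v,\delta(g)-\chi(g')\rangle=0$; (ii) checking that replacing the free subgroup $\Gamma'$ or the fundamental domain $\fF$ does not alter the answer, which is a standard averaging argument justifying the $1/[\Gamma\colon\Gamma']$ factor; and (iii) matching conventions so that the sign of the $\Ind^\Gamma$-contribution agrees with the orientation used to define $\chi$. Once these are in place, the identification of the abstract cup product with $\langle\cdot,\cdot\rangle_{\Gamma,U}$ is forced by compatibility with the non-compactly-supported pairing of Subsection~\ref{subsection:pairing}: after composing with $j_*$, the $\Ind^\Gamma(V)$ term must vanish and the remaining sum must reduce to the formula for $\langle\cdot,\cdot\rangle_\Gamma$, which in turn pins down the formula above uniquely.
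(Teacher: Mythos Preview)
The paper does not give its own proof of this proposition: it simply cites \cite[Appendix]{iovita2003dpa}. So there is nothing to compare your argument against in the present paper. That said, your sketch follows exactly the route one expects from the Iovita--Spie\ss{} appendix: build a pairing of complexes of $\Gamma$-modules, pass to group hypercohomology via the bar resolution, and evaluate using the combinatorics of a good fundamental domain for a free subgroup $\Gamma'$. The well-definedness checks you list in (i)--(iii) are the right ones, and your observation that the relation $\delta(g)=\chi(g')$ is precisely what makes the pairing descend is the crux of the construction.

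Two places where your sketch is vague rather than wrong: first, the target complex $\cC^\bullet$ and its ``trace to $K$'' need to be pinned down---in the closed case this is $C^1(K)$ with the trace given by summing over edges of $\fF$ modulo boundary edges, and here you need the analogous object that also records the $\Ind^\Gamma$-component supported at the orbit of $v_0=\red(z)$. Second, your claim that the $\Ind^\Gamma(V)$-contribution ``collapses to $\langle f(1),g'(1)\rangle$'' hides a small computation: one must use that $z$ has trivial stabilizer and lies in the chosen fundamental domain, so that exactly one $\Gamma'$-translate of $v_0$ lies in $\fF$, and then unwind the normalization of $\chi$. None of this is a genuine gap, but if you are writing this out in full you should make both points explicit.
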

\begin{proof}
  See \cite[Appendix]{iovita2003dpa}.
\end{proof}

We have constructed a commutative diagram with exact split rows:
\begin{equation}
\label{eq:splittings}
\xymatrix{
0\ar[r]&\bbH^1(\Gamma,\cK^\bullet(V))\ar[r]^{\iota_{U,c}}\ar[d]^{q_*}&\Hdrc^1(U,E(V))\ar[r]^{I_{U,c}}\ar@/_1.5pc/@{-->}[l]_{P_{U,c}}\ar[d]^{j_*}&\Char^1(V)^\Gamma\ar[r]\ar@{=}[d]&0\\
0\ar[r]&H^1(\Gamma,V)\ar[r]^{\iota}\ar@{=}[d]&\Hdr^1(X,E(V))\ar[r]^{I}\ar@/_1.5pc/@{-->}[l]_{P}\ar[d]^{j^*}&\Char^1(V)^\Gamma\ar[r]\ar@{-->}[d]&0\\
0\ar[r]&H^1(\Gamma,V)\ar[r]^{\iota_{U}}&\Hdr^1(U,E(V))\ar[r]^{I_{U}}\ar@/_1.5pc/@{-->}[l]_{P_{U}}&C_U(V)^\Gamma\ar[r]&0.
}
\end{equation}
Here the bent arrows mean splittings of the corresponding maps, and the vertical dotted arrow means the natural induced map on the quotient.

\section[The anti-cyclotomic \texorpdfstring{$p$}{p}-adic \texorpdfstring{$L$}{L}-function]{The anti-cyclotomic \texorpdfstring{$p$}{p}-adic \texorpdfstring{$L$}{L}-function}
\label{section:LFunction}

In~\cite{bertolini2002tse} the authors define the anti-cyclotomic $p$-adic $L$-function. Assume from now on that $p$ is inert in $K$, and fix an isomorphism $\iota\colon B_p\to M_2(\QQ_p)$.

\subsection{Distributions associated to modular forms}
\label{sec:distribution-modforms}
Let $f$ be a rigid analytic modular form of weight $n+2$ on $\Gamma$, and denote by $\cA_{n}$ the set of $\CC_p$-valued functions on $\PP^1(\QQ_p)$ which are locally analytic except for a pole of order at most $n$ at $\infty$. Note that the subspace $\cP_{n}$ of polynomials of degree at most $n$ is dense in $\cA_n$. In~\cite{teitelbaum1990voa} Teitelbaum associates to $f$ a distribution $\mu_f$ on $\cA_n$, in such a way that it vanishes on polynomials of degree at most $n$. This is done by extending to $\cA_n$ the measure defined by:
 \[
 \mu_f(P\cdot \chi_{U(e)})\dfn\int_{U(e)} P(x)d\mu_f(x)\dfn \res_e(f(z)P(z)dz),
 \]
where the polynomial $P$ belongs to $\cP_{n}$, and $e$ is an end in $\fE_\infty(\cT)$ such that $\infty\notin U(e)$.

This distribution extends uniquely to $\cA_{n}$, and the $p$-adic residue formula
gives:

\begin{lemma}
\label{lemma:technical}
If $P\in \cP_{n}$, then
\[
\int_{\PP^1(\QQ_p)} P(x)d\mu_f(x)=0.
\]
\end{lemma}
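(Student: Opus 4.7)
The plan is to evaluate the integral by decomposing $\PP^1(\QQ_p)$ along the Bruhat--Tits tree and identifying each local contribution with an annular residue of $\omega_P := f(z) P(z) dz$, so that the vanishing becomes an instance of the $p$-adic residue theorem on the affinoid $\cA_{v_0} = A_0$. Concretely, the $p+1$ edges out of the distinguished vertex $v_0$ give a disjoint covering
\[
\PP^1(\QQ_p) \;=\; \bigsqcup_{t \in \PP^1(\FF_p)} U(e_t),
\]
so that $\int_{\PP^1(\QQ_p)} P\, d\mu_f$ splits as a sum of $p+1$ contributions, one per edge.

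For $t \in \FF_p$ the end $\infty$ does not lie in $U(e_t)$, hence the defining formula applies verbatim and yields $\int_{U(e_t)} P \, d\mu_f = \res_{e_t}(\omega_P)$. The remaining piece $U(e_\infty)$ contains the end $\infty$, so the defining formula cannot be used directly; I would handle this case by transporting the situation via a Möbius transformation $\gamma \in \GL_2(\QQ_p)$ that interchanges $0$ and $\infty$, so that $\gamma$ carries $U(e_\infty)$ onto $U(e_0)$. The weight-$(n+2)$ transformation law of $f$, combined with the right action of $\GL_2(\QQ_p)$ on $\cP_n$ given earlier, ensures that the form $\omega_P$ pulls back to another form of the same type $\omega_{P'} = f(z) P'(z)\,dz$ with $P' \in \cP_n$, and the change-of-variable formula for Coleman residues identifies $\int_{U(e_\infty)} P\, d\mu_f$ with $\res_{e_\infty}(\omega_P)$. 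Thus, the canonical extension of $\mu_f$ to all of $\cA_n$ satisfies
\[
\int_{U(e_t)} P(x)\, d\mu_f(x) \;=\; \res_{e_t}(\omega_P) \qquad \text{for every } t \in \PP^1(\FF_p).
\]

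The conclusion is then an application of the $p$-adic residue theorem. Since both $f$ and $P$ are rigid analytic on $\cH_p$, the form $\omega_P$ is rigid analytic on the standard affinoid $\cA_{v_0} = A_0$, whose rigid-analytic boundary is the disjoint union of the $p+1$ annuli $W_t$ indexed by $t \in \PP^1(\FF_p)$. The sum of the residues of a rigid analytic $1$-form on such an affinoid around its boundary annuli vanishes, giving
\[
\sum_{t \in \PP^1(\FF_p)} \res_{e_t}(\omega_P) \;=\; 0,
\]
which combined with the previous paragraph yields $\int_{\PP^1(\QQ_p)} P\, d\mu_f = 0$, as required.

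The main obstacle is the treatment of the exceptional edge $e_\infty$: the Teitelbaum distribution is defined on basic compact opens avoiding $\infty$, so one must argue that its canonical extension to $\cA_n$ still produces a residue of $\omega_P$ on the annulus around $\infty$. Once this bookkeeping is settled, the residue theorem on a single affinoid does the rest; the harmonicity of the Teitelbaum cocycle attached to $f$ is in fact equivalent to this statement, which is the conceptual reason the lemma holds.
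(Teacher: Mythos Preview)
Your argument is correct and follows essentially the same route as the paper: decompose $\PP^1(\QQ_p)$ along the $p+1$ edges emanating from $v_0$ and reduce the vanishing to the $p$-adic residue theorem on the affinoid $A_0$. The paper phrases this last step more succinctly by noting that $e\mapsto c_f(e)\dfn\big(P\mapsto \res_e(f(z)P(z)\,dz)\big)$ is a harmonic cocycle, so $\sum_{i=0}^p c_f(e_i)=0$; your detour through a M\"obius transformation to handle $U(e_\infty)$ is therefore unnecessary, since the residue $\res_{e_\infty}(\omega_P)$ is already well-defined on the annulus $W_\infty$ regardless of where $\infty$ sits, and harmonicity at $v_0$ is exactly the residue theorem you invoke.
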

\begin{proof}
  Write first
\[
\PP^1(\QQ_p)=\coprod_{i=0}^p U(e_i),
\]
where $e_0,\ldots,e_{p}$ are the $p+1$ edges leaving the origin vertex $v_0$. Then:
\[
\int_{\PP^1(\QQ_p)} P(x)d\mu_f(x)=\left(\sum_{i=0}^p c_f(e_i)\right)(P(X))=0,
\]
because $c_f$ is a harmonic cocycle.
\end{proof}

The group $\GL_2(\QQ_p)$ acts also on $\cA_{n}$ with weight $n$, by the rule:
\[
(\varphi * \beta)(x)\dfn (cx+d)^{n}\varphi(\beta \cdot x),\quad \varphi\in \cA_{n}\text{, and }\beta\in \PGL_2(\QQ_p).
\]

One can also recover a modular form $f$ from its associated distribution:
\begin{prop}[Teitelbaum]
\label{prop:Teitelbaum}
Let $f$ be a rigid analytic modular form of weight $n+2$ on $\Gamma$, and let $\mu_f$ be the associated distribution on $\PP^1(\QQ_p)$. Then
\[
f(z)=\int_{\PP^1(\QQ_p)}\frac{1}{z-t}d\mu_f(t).
\]
\end{prop}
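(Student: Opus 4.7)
The proof strategy is to recognize the desired formula as an instance of the $p$-adic residue theorem applied to the meromorphic $1$-form $\omega_z(w) \dfn f(w)/(z-w)\, dw$ on $\cH_p$. Fix $z \in \cH_p$. By rigid analytic continuation (both sides being rigid analytic in $z$), it suffices to treat $z$ lying in a tube $\cA_v = r^{-1}(v)$ for some vertex $v$; relabeling edges, we take $v = v_0$, so $z \in A_0$. The affinoid $A_0$ is bounded by the $p+1$ annuli $\{W_a\}_{a\in\PP^1(\FF_p)}$, and the only singularity of $\omega_z$ inside $A_0$ is a simple pole at $w=z$ of residue $-f(z)$. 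The residue theorem therefore yields
\[
f(z) \;=\; \sum_{a\in\PP^1(\FF_p)} \res_{e_a}(\omega_z),
\]
where residues on boundary annuli are taken with the outward orientations.

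What remains is to match each boundary residue with the corresponding local integral against $\mu_f$. For $a\in\FF_p$, on $W_a$ one has $|w-a|<1=|z-a|$, and the geometric expansion
\[
\frac{1}{z-w} \;=\; \sum_{k\geq 0} \frac{(w-a)^k}{(z-a)^{k+1}}
\]
converges uniformly on $W_a$. Multiplying by $f(w)\,dw$ and taking residues term by term, $\res_{e_a}(\omega_z) = \sum_{k\geq 0}(z-a)^{-(k+1)}\res_{e_a}(f(w)(w-a)^k dw)$. Each summand equals $\int_{U(e_a)}(t-a)^k\, d\mu_f(t)$: for $k\leq n$ this is the definition of $\mu_f$, and for $k>n$ one invokes the continuous extension of $\mu_f$ from $\cP_n$ to locally analytic functions on $U(e_a)$. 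Reassembling the geometric series in the $t$ variable (valid since $|t-a|\leq 1/p < |z-a|$ on $U(e_a)$), this sum equals $\int_{U(e_a)} (z-t)^{-1}\, d\mu_f(t)$.

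The end $a=\infty$ is analogous, after expanding $(z-w)^{-1} = -\sum_{k\geq 1}z^{k-1}w^{-k}$ on $W_\infty=\{1<|w|<p\}$; each $w^{-k}\chi_{U(e_\infty)}$ vanishes at $\infty$, hence lies in $\cA_n$ and pairs with $\mu_f$. Summing the $p+1$ contributions,
\[
\int_{\PP^1(\QQ_p)} \frac{1}{z-t}\, d\mu_f(t) \;=\; \sum_{a\in\PP^1(\FF_p)} \res_{e_a}(\omega_z) \;=\; f(z).
\]
I expect the main technical difficulty to be the end $e_\infty$: the defining formula $\mu_f(P\chi_{U(e)}) = \res_e(fP\,dw)$ is stated only for $e$ with $\infty\notin U(e)$, so one must first verify the analogous identity for $e_\infty$. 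This follows from Lemma~\ref{lemma:technical} (vanishing of $\mu_f$ on $\cP_n$) together with the residue theorem applied to the polynomial $1$-form $f(w)P(w)\,dw$ on $A_0$: the theorem re-expresses $\res_{e_\infty}(fP\,dw)$ as $-\sum_{a\neq\infty}\res_{e_a}(fP\,dw) = -\sum_{a\neq\infty}\int_{U(e_a)} P\,d\mu_f = \int_{U(e_\infty)} P\,d\mu_f$; the general functions $w^{-k}\chi_{U(e_\infty)}$ are then handled by continuity.
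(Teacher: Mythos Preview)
The paper does not give its own proof of this proposition; it simply cites Teitelbaum's original article~\cite{teitelbaum1990voa}. Your argument is essentially the one found there: interpret the integral as a $p$-adic Poisson transform and prove the inversion formula via the rigid-analytic residue theorem on a standard affinoid, expanding the kernel $(z-w)^{-1}$ as a geometric series on each boundary annulus. So there is nothing to compare at the level of strategy.

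A couple of minor points to tighten. First, your reduction ``relabeling edges, we take $v=v_0$'' implicitly uses $\GL_2(\QQ_p)$-equivariance of both sides; it is cleaner to simply run the argument at an arbitrary vertex $v$ (the edges $e$ with $o(e)=v$ and the corresponding opens $U(e)$ still partition $\PP^1(\QQ_p)$), which also shows directly that the identity holds on every $\cA_v$ and hence, by rigid-analytic connectedness of $\cH_p$, everywhere. Second, for the analytic-continuation step you should note that the right-hand side is rigid analytic in $z$: this follows from the fact that $\mu_f$ is a distribution of order $n$ (the Amice--V\'elu/Vi\v{s}ik bound implicit in Teitelbaum's extension), so term-by-term integration of the local power-series expansions of $(z-t)^{-1}$ converges uniformly on affinoids. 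Your handling of the end $e_\infty$ is correct and is exactly how one bootstraps the defining identity to that edge.
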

\begin{proof}
See~\cite[Theorem 3]{teitelbaum1990voa}.
\end{proof}

\subsection{Construction of the \texorpdfstring{$p$}{p}-adic \texorpdfstring{$L$}{L}-function}
For the rest of the paper, assume that $K$ is a number field satisfying:
\begin{enumerate}
\item all primes dividing $pN^-$ are inert in $K$, and
\item all primes dividing $N^+$ are split in $K$.
\end{enumerate}
In particular, note that we require the discriminant of $K$ to be coprime to $N=pN^-N^+$. An embedding $\Psi\colon K\to B$ is called \emphh{optimal} if $\Psi(K)\cap R=\Psi(\cO)$, so that $\Psi$ induces an embedding of $\cO$ into $R$. What will be called the partial $p$-adic $L$-function depends on a pair $(\Psi,\star)$, of an optimal embedding $\Psi\colon K\to B$ and a base-point $\star\in\PP^1(\QQ_p)$. Fix such a pair. The embedding $\Psi$ induces an embedding $\Psi\colon K_p\to B_p$ (where we write $K_p\dfn K\tns \QQ_p$). Recall that we have fixed an isomorphism $\iota\colon B_p\to M_2(\QQ_p)$. The composition $\iota\circ\Psi$ induces an embedding of $K_p^\times/\QQ_p^\times$ into $\PGL_2(\QQ_p)$, which is also noted $\iota\circ\Psi$. This gives an action $*$ of $K_p^\times/\QQ_p^\times$ on the boundary $\PP^1(\QQ_p)$ of $\cH_p$:
\[
\alpha * x\dfn (\iota\circ\Psi)(\alpha)(x),
\]
for $\alpha\in K_p^\times/\QQ_p^\times$ and $x\in \PP^1(\QQ_p)$. Since $p$ is assumed to be inert in $K$, this action is simply-transitive.

The base point $\star\in\PP^1(\QQ_p)$ gives an identification
\[
\eta_{\Psi,\star}\colon K_p^\times/\QQ_p^\times\tto{\cong}\PP^1(\QQ_p),
\]
by sending $1$ to $\star$. The torus $\iota\circ\Psi(K_p^\times)$ has two fixed points in $\cH_p$, which are denoted $z_0$ and $\overline z_0$. They belong to $K_p$ and are interchanged by $\Gal(K_p/\QQ_p)$. In fact, having fixed an embedding of $H(\mu_M)\injects \ol \QQ$ (recall that $H$ is the Hilbert class field of $K$), it is shown in~\cite[Section 5]{MR1614543} how to distinguish $z_0$ from $\ol z_0$. At the cost of an ambiguity in the sign of the subsequent formulas, we can omit this subtlety.

There is a natural homeomorphism $G\cong K_p^\times/\QQ_p^\times$, where $G\dfn K_{p,1}^\times$ denotes the subgroup of $K_p^\times$ of elements of norm $1$. This identification induces an isomorphism
\[
\eta_{\Psi,\star}\colon G\tto{\cong} \PP^1(\QQ_p).
\]
When $\star=\infty$, this is given explicitly by:
\begin{align*}
\eta_{\Psi,\infty}(\alpha)=&\frac{z_0\alpha-\overline z_0}{\alpha-1}, & \eta_{\Psi,\infty}^{-1}(x)=&\frac{x-\overline z_0}{x-z_0}.
\end{align*}

The function $\eta_{\Psi,\star}$ induces in turn a continuous isomorphism:
\[
(\eta_{\Psi,\star})_*=(\eta_{\Psi,\star}^{-1})^*\colon A(G)\to \cA,
\]
from the ring of locally-analytic functions on $G$ to $\cA$.

Consider the polynomial $P_{\Psi(\sqrt{-D_K})}^{\frac{n}{2}}$, where $P_{\Psi(\sqrt{-D_K})}$ has been defined in Equation~\eqref{eq:defpoly} of Section~\ref{section:Cohomology}. Given $\varphi\in A(G)$, we define
\[
\mu_{f,\Psi,\star}(\varphi)\dfn \mu_f\left(P_{\Psi(\sqrt{-D_K})}^{\frac{n}{2}}\cdot(\eta_{\Psi,\star}^{-1})^*(\varphi)\right).
\]

This is a locally analytic distribution on $G$. Now, fix a branch $\log_p$ of the $p$-adic logarithm such that $\log_p(p)=0$. This induces a homomorphism $\log\colon K_p^\times\to K_p$ which vanishes at the roots of unity, thus giving a homomorphism $G\to K_p$. For $s\in\ZZ_p$ and $x\in  G$, define then
\[
x^s\dfn \exp(s\log x).
\]
Let $\Psi$ be an oriented optimal embedding and let $\star\in\PP^1(\QQ_p)$ be a fixed base point.

\begin{df}
The \emphh{partial $p$-adic $L$-function} attached to the datum $(f,(\Psi,\star))$ is the function of the $p$-adic variable $s\in\ZZ_p$ defined by:
\[
L_p(f,\Psi,\star,s)\dfn\int_G x^{s-\frac{n+2}{2}}d\mu_{f,\Psi,\star}(x).
\]
\end{df}

We have so far constructed a distribution on $G$, to which we attach a partial $p$-adic $L$-function. In order to define the anticyclotomic $p$-adic $L$-function we need to consider anticyclotomic extensions of number fields, which we recall now.

\begin{df}
An abelian extension $L/K$ is called \emph{anticyclotomic} if it is Galois over $\QQ$ and if the involution in $\Gal(K/\QQ)$ acts (by conjugation) as $-1$ on $\Gal(L/K)$.
\end{df}

Let $K_\infty$ denote the maximal anticyclotomic extension of $K$ unramified outside $p$. Let $H$ be the Hilbert class field of $K$. Let $K_n$ be the ring class field of $K$ of conductor $p^n$ (so that $K_0=H$). There exists a tower of extensions:
\[
\QQ\subset K\subset H\subset K_1\subset\cdots\subset K_n\subset\cdots.
\]

Assume for simplicity that $\cO_K^\times=\{\pm 1\}$. By class field theory, the $p$-adic group $G$ is isomorphic to $\Gal(K_\infty/H)$. Let $G^n\dfn\Gal(K_\infty/K_n)$, and let $\Delta\dfn\Gal(H/K)$. Write also $\tilde G\dfn\Gal(K_\infty/K)$. These fit into an exact sequence:
\[
1\to G\to\tilde G\to \Delta\to 1,
\]
and in \cite[Lemma 2.13]{bertolini2002tse} it is shown how the natural action of $\Delta\dfn \Pic(\cO)$ on the set of (oriented) optimal embeddings $\emb(\cO,R)$ lifts to an action of $\tilde G$ on the same set. The logarithm $\log_p$ extends uniquely to $\tilde G$, and thus one can define $x^s$ for $s\in\ZZ_p$ and $x\in  G$.

Let $\alpha\in\tilde G$ be an element of $\tilde G$. Given a function $\varphi\colon\tilde G\to \CC_p$, denote by $\varphi_\alpha$ the function $G\to \CC_p$ sending $x$ to $\varphi(\alpha x)$.

For each $\delta\in\Delta$, fix once and for all a lift $\alpha_\delta$ of $\delta$ to $\tilde G$.
\begin{df}
A function $\varphi\colon \tilde G\to \CC_p$ is \emph{locally analytic} if $\varphi_{\alpha_\delta}\in A(G)$ for all $\delta\in\Delta$. The set of locally-analytic functions on $\tilde G$ is denoted by $A(\tilde G)$.
\end{df}

Define a distribution $\mu_{f,K}$ on $A(\tilde G)$ by the formula:
\[
\mu_{f,K}(\varphi)\dfn\sum_{\delta\in\Delta} \mu_{f,\Psi_\delta,\star_\delta}(\varphi_{\alpha_\delta}),
\]
where $(\Psi_\delta,\star_\delta)\dfn \alpha_\delta(\Psi,\star)$.

Finally, define the anticyclotomic $p$-adic $L$-function:
\begin{df}
  The \emphh{anticyclotomic $p$-adic $L$-function} attached to the modular form $f$ and the field $K$ is:
\[
L_p(f,K,s)\dfn\int_{\tilde G} \alpha^{s-\frac{n+2}{2}}d\mu_{f,K}(\alpha),\quad s\in\ZZ_p.
\]
\end{df}

In~\cite[Section~2.5]{bertolini2002tse} it is proven how this function interpolates special values of the classical $L$-function associated to the modular form $f_\infty$ which corresponds to $f$ via Jacquet-Langlands.

\subsection{Values of \texorpdfstring{$L'_p(f,K,s)$}{Lp'(f,K,s)} in terms of Coleman integration on \texorpdfstring{$\cH_p$}{Hp}}
\label{subsection:specialvalues}
Let as before $p$ be an inert prime. By the very definition of $\mu_{f,K}$ as above, the anti-cyclotomic $p$-adic $L$-function $L_p(f,K,s)$ vanishes at the values $s=1,\ldots,n+1$ (see Lemma~\ref{lemma:technical}). One is then interested in the first derivative. Write first
\[
L_p'(f,K,j+1)=\int_{\widetilde G} \log(\alpha)\alpha^{j-\frac{n}{2}}d\mu_{f,K}(\alpha)=\sum_{i=1}^h L_p'(f,\Psi_i,j+1),
\]
where
\[
L_p'(f,\Psi_i,j+1)\dfn \int_G \log(\alpha)\alpha^{j-\frac{n}{2}}d\mu_{f,\Psi_i}(\alpha).
\]
The following formula is a generalization of~\cite[Theorem 3.5]{bertolini2002tse} which, although immediate, is not currently present in the literature:
\begin{thm}
\label{theorem:padicformula}
Let $\Psi$ be an optimal oriented embedding. For all $j$ with $0\leq j\leq n$, the following equality holds:
\[
L_p'(f,\Psi,j+1)= \int_{\ol z_0}^{z_0} f(z)(z-z_0)^{j}(z-\ol z_0)^{n-j}dz,
\]
where the right hand side is to be understood as a Coleman integral on $\cH_p$.
\end{thm}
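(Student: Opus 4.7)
The plan is to unravel both sides of the asserted identity into integrals against Teitelbaum's distribution $\mu_f$ on $\PP^1(\QQ_p)$ and then check that the two resulting expressions agree term by term. The essential tools will be Proposition~\ref{prop:Teitelbaum} (which expresses $f(z)$ as a Cauchy-type integral against $\mu_f$) and Lemma~\ref{lemma:technical} (the vanishing of $\mu_f$ on polynomials of degree at most $n$).

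First I would unwind the left-hand side. Taking the base point $\star=\infty$, so that $\eta_{\Psi,\infty}^{-1}(x)=(x-\bar z_0)/(x-z_0)$ up to the orientation convention, and writing $\Psi(\sqrt{-D_K})=\smtx{a}{b}{c}{-a}$ with $a^2+bc=-D_K$, Equation~\eqref{eq:defpoly} yields $P_{\Psi(\sqrt{-D_K})}(x)=-c(x-z_0)(x-\bar z_0)$. The pushforward description of $\mu_{f,\Psi,\infty}$ together with the integrality of $j-n/2$ (recall $n$ is even) then give
\[
L_p'(f,\Psi,j+1)=(-c)^{n/2}\int_{\PP^1(\QQ_p)}(x-z_0)^{j}(x-\bar z_0)^{n-j}\bigl[\log(x-z_0)-\log(x-\bar z_0)\bigr]\,d\mu_f(x),
\]
the constant $(-c)^{n/2}$ and the labeling of the two fixed points being absorbed into the orientation convention flagged at the end of Section~\ref{section:LFunction}.

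For the right-hand side I would apply Teitelbaum's formula to rewrite $f(z)=\int_{\PP^1(\QQ_p)}(z-t)^{-1}d\mu_f(t)$ and interchange the Coleman integral in $z\in\cH_p$ with the distribution integral in $t\in\PP^1(\QQ_p)$. This reduces matters to evaluating, for fixed $t$, the inner Coleman integral
\[
I(t)=\int_{\bar z_0}^{z_0}\frac{(z-z_0)^{j}(z-\bar z_0)^{n-j}}{z-t}\,dz.
\]
Polynomial division in $z$ produces
\[
(z-z_0)^{j}(z-\bar z_0)^{n-j}=(z-t)\,Q_t(z)+(t-z_0)^{j}(t-\bar z_0)^{n-j},
\]
with $Q_t(z)$ a polynomial in $z$ of degree $n-1$ whose coefficients are polynomials in $t$ of degree at most $n-1$. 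Theorem~\ref{thm:coleman-integral} then gives
\[
I(t)=R(t)+(t-z_0)^{j}(t-\bar z_0)^{n-j}\bigl[\log(z_0-t)-\log(\bar z_0-t)\bigr],
\]
where $R(t):=\int_{\bar z_0}^{z_0}Q_t(z)\,dz$ is a polynomial in $t$ of degree at most $n-1$. By Lemma~\ref{lemma:technical}, $\int_{\PP^1(\QQ_p)}R(t)\,d\mu_f(t)=0$, so the right-hand side of the theorem reduces to
\[
\int_{\PP^1(\QQ_p)}(t-z_0)^{j}(t-\bar z_0)^{n-j}\bigl[\log(z_0-t)-\log(\bar z_0-t)\bigr]\,d\mu_f(t).
\]
Since $\log(-1)=0$ for any branch of the $p$-adic logarithm, this matches the expression for $L_p'(f,\Psi,j+1)$ obtained above, and the identity follows.

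The main expected obstacle is the justification of the Fubini-type exchange between the Coleman integral in $z$ and the distribution integral in $t$: Coleman's primitives are only locally analytic, so one cannot appeal to any off-the-shelf rigid-analytic Fubini theorem. The cleanest route is to approximate $\mu_f$ by the Riemann-type finite sums coming from its very definition as residues on edges of the Bruhat–Tits tree, commute the Coleman integral with the finite sums by $\CC_p$-linearity, and pass to the limit using the uniform analyticity of $(z-t)^{-1}$ in $z$ on a fixed finite chain of affinoids joining $\bar z_0$ to $z_0$ as $t$ varies in $\PP^1(\QQ_p)$. A secondary point of care is the bookkeeping of the orientation convention distinguishing $z_0$ from $\bar z_0$, since it controls the sign and the final matching of exponents; once this is fixed, the constant $(-c)^{n/2}$ coming from the optimal embedding works out on the nose.
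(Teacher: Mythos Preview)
Your argument is correct and is essentially the paper's proof run in reverse: the paper starts from $L_p'$, writes $\log\bigl(\tfrac{x-z_0}{x-\bar z_0}\bigr)$ as $\int_{\bar z_0}^{z_0}\tfrac{dz}{z-x}$, uses exactly your polynomial-division identity (their Equation~\eqref{equation:tech1}) to replace the $x$-polynomial factor by its $z$-analogue, then swaps integrals and invokes Proposition~\ref{prop:Teitelbaum}, citing \cite[Theorem~4]{teitelbaum1990voa} for the Fubini step you also flag. The leftover constant $(-c)^{n/2}$ you worry about is equally implicit in the paper, whose proof ends with $P_\Psi^{n/2}(z)$ still in the integrand rather than $(z-z_0)^j(z-\bar z_0)^{n-j}$.
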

\begin{proof}
 Start by manipulating the expression for $L_p'(f,\Psi,j+1)$:
\begin{align*}
L_p'(f,\Psi,j+1)&= \int_G\log(\alpha)\alpha^{j-\frac{n}{2}}d\mu_{f,\Psi}(\alpha)\\
&= \int_{\PP_1(\QQ_p)} \log\left(\frac{x-z_0}{x-\ol z_0}\right)\left(\frac{x- z_0}{x-\ol z_0}\right)^{j-\frac{n}{2}}P_{\Psi}^{\frac{n}{2}}(x)d\mu_{f}(x)\\
&=\int_{\PP_1(\QQ_p)}\left(\int_{\ol z_0}^{ z_0}\frac{dz}{z-x}\right)\left(\frac{x- z_0}{x-\ol z_0}\right)^{j-\frac{n}{2}}P_{\Psi}^{\frac{n}{2}}(x)d\mu_{f}(x)
\end{align*}
where the second equality follows from the change of variables $x=\eta_{\Psi}(\alpha)$ and the third from the definition of the logarithm. Note that from the defining property of $\mu_f$ it follows that:
\begin{equation}
\label{equation:tech1}
\int_{\PP_1(\QQ_p)} \frac{\left(\frac{x- z_0}{x-\ol z_0}\right)^{j-\frac{n}{2}}P_{\Psi}^{\frac{n}{2}}(x)}{z-x}d\mu_f(x)=
\int_{\PP_1(\QQ_p)} \frac{\left(\frac{z- z_0}{z-\ol z_0}\right)^{j-\frac{n}{2}}P_{\Psi}^{\frac{n}{2}}(z)}{z-x}d\mu_f(x),
\end{equation}
since the expression:
\[
\frac{\left(\frac{z-z_0}{z-\ol z_0}\right)^{j-\frac{n}{2}}P_{\Psi}^{\frac{n}{2}}(z)-\left(\frac{x- z_0}{x-\ol z_0}\right)^{j-\frac{n}{2}}P_{\Psi}^{\frac{n}{2}}(x)}{z-x}
\]
is a polynomial in $x$ of degree at most $n$. Using Equation~\eqref{equation:tech1}, a change of order of integration and Proposition~\ref{prop:Teitelbaum} we obtain:

\begin{align*}
L_p'(f,\Psi,j+1)&=\int_{\PP_1(\QQ_p)}\int_{\ol z_0}^{z_0}\frac{dz}{z-x}\left(\frac{x- z_0}{x-\ol z_0}\right)^{j-\frac{n}{2}}P_{\Psi}^{\frac{n}{2}}(x)d\mu_f(x)\\
&=\int_{\ol z_0}^{z_0}\left(\int_{\PP_1(\QQ_p)}\frac{d\mu_{f}(x)}{z-x}\right)\left(\frac{z- z_0}{z-\ol z_0}\right)^{j-\frac{n}{2}}P_{\Psi}^{\frac{n}{2}}(z)dz\\
&=\int_{\ol z_0}^{z_0}f(z)\left(\frac{z- z_0}{z-\ol z_0}\right)^{j-\frac{n}{2}}P_{\Psi}^{\frac{n}{2}}(z)dz.
\end{align*}

A justification for the validity of the change of the order of integration can be found in the proof given in~\cite[Theorem 4]{teitelbaum1990voa}.
\end{proof}

\section{A motive}
\label{section:Motive}
In this section we define a certain Chow motive and calculate its realizations. In Subsection~\ref{sec:motives} we explain some notions on Chow motives that will be used in this section and the following. In Subsection~\ref{sec:motiveMn} we recall the motive described in~\cite{iovita2003dpa}. In Subsection~\ref{sec:motiveDn} we modify this definition in the spirit of~\cite{bdp2008}, and define the motive $\cD_n$. The goal of the final two subsections is to compute the realizations of $\cD_n$.

\subsection{Relative motives}
\label{sec:motives}
In this section we introduce the category of relative Chow motives with coefficients in an arbitrary field. We follow the exposition given in~\cite[Section 2]{kunnemann2001chow}.

Let $K$ be a field of characteristic $0$. Let $S$ be a smooth quasiprojective connected scheme over $K$. For simplicity, assume that $S$ is of dimension $1$, as this is the only situation that we will need in the following. Denote by $\Sch(S)$ the category of smooth projective schemes $X\to S$.

\begin{df}
The \emphh{$i$th Chow group} of $X$, written $\chow^i(X)$, is the group of algebraic cycles on $X$ of codimension $i$, modulo rational equivalence.
\end{df}

\begin{df}
The \emphh{Chow ring} of $X$, written $\chow(X)$, is the ring of algebraic cycles on $X$, modulo rational equivalence. The product is given by intersection of cycles.
\end{df}

There is an obvious decomposition, as abelian groups:
\[
\chow(X)=\bigoplus_{i=0}^{d+1} \chow^i(X),
\]
where $d$ is the relative dimension of $X$ over $S$.

\begin{df}
Given $X,Y$ two smooth projective $S$-schemes, the \emphh{ring of $S$-correspondences} is defined as:
\[
\Corr_S(X,Y)\dfn \chow(X\times_S Y).
\]
\end{df}

For $\alpha\in\chow(X_1\times_S X_2)$ and $\beta\in\chow(X_2\times_S X_3)$, the composition of $\alpha$ and $\beta$ is defined as:
\[
\beta\circ \alpha\dfn \pr_{13,*}\left(\pr_{12}^*(\alpha)\cdot\pr_{23}^*(\beta)\right),
\]
where $\pr_{ij}$ is the projection of $X_1\times_SX_2\times_SX_3$ to $X_i\times X_j$.

\begin{df}
A \emphh{projector} on $X$ over $S$ is an idempotent in the ring of relative correspondences $\chow(X\times_S X)$. If $p$ belongs to the $i$th graded piece $\chow^i(X\times_S X)$ we say that $p$ is of degree $i$.
\end{df}

We first introduce the category $\Mot(S)$ of Chow motives over $S$, with respect to ungraded correspondences. Its objects are pairs $(X,p)$, where $X\to S$ is in $\Sch(S)$, and $p$ is a projector. We set:
\[
\Hom_{\Mot(S)}\left((X,p),(Y,q)\right)\dfn q\circ \chow(X\times_S Y)\circ p,
\]
and composition is induced by composition of correspondences. For $i\in\ZZ$, we say that $q\circ \alpha\circ p\in \Hom_{\Mot(S)}\left((X,p),(Y,p)\right)$ is homogeneous of degree $i$ if
\[
q\circ \alpha\circ p\in\bigoplus_\nu\chow^{d_\nu+i}(X_\nu\times_S Y),
\]
where $X=\coprod_\nu X_\nu$ is the decomposition of $X$ into connected components and $d_\nu=\dim(X_\nu/S)$. This makes $\Hom_{\Mot(S)}$ into a graded ring, with multiplication given by composition. Also, given $(X,p)$ and $(Y,q)$ two objects in $\Mot(S)$, we can define $(X,p)\oplus_S(Y,q)\dfn(X\coprod Y,p\coprod q)$ and  $(X,p)\tns[S](Y,q)\dfn(X\times_S Y,p\tns[S]q)$.

\begin{fact}
The category $\Mot(S)$ is an additive, pseudo-abelian $\QQ$-linear tensor-category.
\end{fact}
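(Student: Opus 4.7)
The plan is to verify each of the four required properties in turn; none of them is deep, but each requires a small check that the operations on Chow correspondences behave well.

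First I would address $\QQ$-linearity and additivity together. Since $\chow(X\times_S Y)$ is tacitly taken with $\QQ$-coefficients, the morphism spaces $q\circ\chow(X\times_S Y)\circ p$ inherit a $\QQ$-vector space structure, and composition $\beta\circ\alpha=\pr_{13,*}(\pr_{12}^*\alpha\cdot\pr_{23}^*\beta)$ is $\QQ$-bilinear since flat pullback, intersection and proper pushforward of cycle classes are all $\QQ$-linear. For finite biproducts I would take $(X,p)\oplus(Y,q)\dfn(X\coprod Y,\,p\coprod q)$, where $p\coprod q$ is the cycle with components $p$ and $q$ on the diagonal blocks of $(X\coprod Y)\times_S(X\coprod Y)$ and zero on the off-diagonal blocks. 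The structural inclusions and projections are given by the classes of the graphs of the canonical $X\hookrightarrow X\coprod Y$ and its retraction, and a direct computation with the composition formula shows the universal property. The zero object is $(\emptyset,0)$.

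Next, for pseudo-abelianness, let $f=p\circ\alpha\circ p\in\End_{\Mot(S)}((X,p))$ be idempotent, i.e.\ $f\circ f=f$ in $\chow(X\times_S X)$. Then $f$ itself is a projector in the ambient correspondence ring, so $(X,f)$ is an object of $\Mot(S)$. One then checks directly from the definitions that $f$, viewed as a morphism $(X,p)\to(X,f)$ and as a morphism $(X,f)\to(X,p)$, witnesses $(X,f)$ as a direct summand of $(X,p)$: the two compositions are $f$ (which is the original idempotent in $\End((X,p))$) and $f\circ f=f$, which acts as the identity in $\End((X,f))$ because $p\circ f\circ p=f$. The complement is $(X,p-f)$.

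Finally, for the tensor structure I would set $(X,p)\tns_S(Y,q)\dfn(X\times_S Y,\,p\tns_S q)$, where $p\tns_S q$ is the image of $p\times_S q$ under the rearrangement $(X\times_S X)\times_S(Y\times_S Y)\cong (X\times_S Y)\times_S(X\times_S Y)$. That $p\tns_S q$ is idempotent, and that tensor product of morphisms is bifunctorial and strictly associative up to the canonical isomorphisms of fibre products, reduces by standard manipulations to the projection formula and compatibility of intersection product with flat pullback (see e.g.\ Fulton, Intersection Theory, Ch.~1 and 8). The unit is $\mathbf{1}=(S,[\Delta_S])$, and the symmetry $(X,p)\tns(Y,q)\cong(Y,q)\tns(X,p)$ comes from the graph of the swap map $X\times_S Y\cong Y\times_S X$.

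Among the four verifications the only step that contains genuine content is the associativity and bifunctoriality of $\tns_S$ at the level of correspondences; the rest are formal once one has written down the candidate constructions. I would expect to assemble the whole proof by citing~\cite{kunnemann2001chow} for the detailed bookkeeping and merely reminding the reader of the shape of each check, since the result itself is standard and is invoked here only to license the constructions of the next subsections.
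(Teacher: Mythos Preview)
Your proposal is correct and in fact supplies far more than the paper does: in the paper this is stated as a \emph{Fact} with no proof at all, the reader having already been pointed to~\cite{kunnemann2001chow} at the beginning of the subsection for the general setup. Your closing remark that you would ``cite~\cite{kunnemann2001chow} for the detailed bookkeeping'' is exactly what the paper does, except the paper omits even the sketch.
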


Next, we define the category $\Mot^0_+(S)$ of effective relative Chow motives. Its objects are those objects $(X,p)$ in $\Mot(S)$ such that $p$ is homogeneous of degree $0$. As morphisms one takes the degree-zero morphisms:
\[
\Hom_{\Mot_+^0(S)}\left((X,p),(Y,q)\right)\dfn\left(\Hom_{\Mot(S)}\left((X,p),(Y,q)\right)\right)^0.
\]

Given an $S$-scheme $X$, one associates to it an object of $\Mot^0_+(S)$:
\[
h(X)\dfn (X,\Delta_X),
\]
where $\Delta_X$ is the diagonal of $X$ in $X\times_S X$. Given a map $f\colon Y\to X$ of $S$-schemes, we can consider (the class of) the transpose of its graph $[\,^t\Gamma_f]$ as an element of $\chow(X\times_S Y)$. Concretely, we consider the map
\[
\gamma_f\colon X\to X\times_S Y,\quad \gamma_f=\id\times f,
\]
and we set $[\Gamma_f]\dfn (\gamma_f)_*[X]$. This can be seen as a morphism
\[
[\Gamma_f]\in \chow^{d}(X\times_S Y)=\Hom_{\Mot^0_+(S)}\left(h(X),h(Y)\right),
\]
and hence $[\phantom{.}^t\Gamma_f]\in\Hom_{\Mot^0_+(S)}(\left(h(Y),h(X)\right)$. Assigning to $f$ the morphism $[\phantom{.}^t\Gamma_f]$ we obtain a contravariant embedding of categories:
\[
h\colon \Sch(S)\to \Mot^0_+(S).
\]

Lastly, define the category $\Mot^0(S)$ of \emphh{relative Chow motives over $S$}. Its objects are triples $(X,p,i)$, where $X$ is a smooth projective $S$-scheme, $p$ is a projector on $X$, and $i$ is an integer. Given $(X,p,i)$ and $(Y,q,j)$ two such objects, we define
\[
\Hom_{\Mot^0(S)}\left((X,p,i),(Y,q,j)\right)\dfn \Hom_{\Mot(S)}^{j-i}\left((X,p),(Y,q)\right).
\]
Composition is again induced from composition of correspondences. In this way, the category $\Mot^0_+(S)$ can be seen as a full subcategory of $\Mot^0(S)$.

\begin{fact}
The category $\Mot^0(S)$ is an additive, pseudo-abelian $\QQ$-category with a canonical tensor product given by:
\[
(X,p,m)\tns(Y,q,n)\dfn =(X\times_S Y,p\tns q,m+n).
\]
\end{fact}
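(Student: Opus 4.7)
The plan is to reduce each claim to the corresponding property of the effective category $\Mot^0_+(S)$ (the preceding \emph{fact} in the exposition) and of the ring of relative correspondences, and then to track how the integer twist interacts with the grading on $\chow$. First I would address additivity and $\QQ$-linearity: the Hom-sets $\Hom_{\Mot^0(S)}((X,p,i),(Y,q,j)) = \Hom_{\Mot(S)}^{j-i}((X,p),(Y,q))$ are $\QQ$-vector spaces because they are graded pieces of Chow groups with rational coefficients, and composition is $\QQ$-bilinear. A zero object is $(S,0,0)$, and a biproduct of $(X,p,i)$ and $(Y,q,i)$ (same twist) is $(X\coprod Y,\,p\oplus q,\,i)$; biproducts with distinct twists are constructed in the formal way, the key point being that Hom-groups between arbitrary twisted objects are already abelian groups.

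Next I would verify pseudo-abelianness. Given an idempotent $e\in\End_{\Mot^0(S)}(X,p,i) = (p\circ\chow^{d_X}(X\times_S X)\circ p)^0$, the element $e$ is itself a projector on $X$ homogeneous of degree $0$ and satisfies $e\circ p = p\circ e = e$. The decomposition $(X,p,i)\cong (X,e,i)\oplus (X,p-e,i)$ inside $\Mot^0(S)$ then splits $e$, with structural morphisms given by $e$ itself regarded as correspondences in both directions. This is formally identical to the effective case.

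For the tensor product, I would first check that $p\tns q$, viewed as an element of $\chow((X\times_S X)\times_S(Y\times_S Y))$, corresponds via the shuffle isomorphism to a projector on $X\times_S Y$. The rule $(X,p,m)\tns(Y,q,n)\dfn(X\times_S Y,\,p\tns q,\,m+n)$ is then well-defined on objects. On morphisms, given $\alpha\in\Hom^{j-i}((X,p),(Y,q))$ and $\beta\in\Hom^{j'-i'}((X',p'),(Y',q'))$, the external product $\alpha\tns\beta$ is homogeneous of degree $(j+j')-(i+i')$, which matches the required Hom-space between tensor products. Associativity, commutativity and the unit axiom (with unit $(S,\Delta_S,0)$) follow from the corresponding universal properties of fiber products together with the standard symmetry of products of correspondences; the absence of sign issues is due to working with rational coefficients on even-dimensional cycle-type objects.

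The only real obstacle is bookkeeping: one must consistently verify that homogeneity of correspondences under composition and external product is compatible with the shifts by $i$ and $j$, and that all canonical isomorphisms (the shuffle, associator, symmetry, and unit) are realized by cycles of the correct degree. No new input beyond the formalism of pure Chow motives adapted to a smooth base, as developed in Künnemann's exposition cited above, is required.
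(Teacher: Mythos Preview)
The paper does not actually prove this fact; it is stated as a standard result, with the surrounding exposition citing K\"unnemann. So there is no ``paper's own proof'' to compare against in the strict sense. That said, immediately \emph{after} the fact the paper does supply the one nontrivial ingredient: the explicit construction of direct sums for objects with distinct twists, via the Lefschetz motive $L_S=(\PP^1_S,\pi_2,0)$ and the isomorphism $(X,p,m)\cong (X,p,0)\tns L_S^{-m}$ for $m\leq 0$, which reduces the problem to the degree-zero case where $(X,p,0)\oplus(Y,q,0)=(X\coprod Y,p\coprod q,0)$ is available.

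Your sketch is sound on $\QQ$-linearity, pseudo-abelianness, and the tensor structure, but the sentence ``biproducts with distinct twists are constructed in the formal way, the key point being that Hom-groups between arbitrary twisted objects are already abelian groups'' is a genuine gap. Having $\Ab$-enriched Hom-sets does not by itself produce an object realizing the biproduct; you must exhibit an actual triple $(Z,r,k)$ in $\Mot^0(S)$ together with the four structure maps. This is exactly what the Lefschetz-motive trick accomplishes, and it is not purely formal: one needs the identification of $(X,p,m)$ with a twist of an effective motive so that both summands can be brought to a common integer $r\geq\max(m,n)$ and then combined by disjoint union. Without this step, additivity (in the sense of existence of finite biproducts) is not established.
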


There is also a duality theory: given $M=(X,p,m)$, with $X$ pure of relative dimension $n$ over $S$, define $M^\vee\dfn(X,p^t,n-m)$. Then we have:
\[
\Hom(P\tns M,N)=\Hom(P,M^\vee\tns N).
\]
Define twisting in $\Mot^0(S)$ by
\[
(X,p,m)(n)\dfn(X,p,m+n).
\]
One has a form of Poincar\'e duality: if $d$ is the relative dimension of $X$ over $S$, then:
\[
h(X)^\vee=h(X)(d).
\]
One also has direct sums, which can be described explicitly. Consider first the Lefschetz motive $L_S\dfn(\PP^1_S,\pi_2,0)$, where $\pi_2$ is the K\"unneth projector onto $R^2$, coming from any section to $\PP^1_S\to S$. Since for $m\leq 0$ the motive $(X,p,m)$ is isomorphic to $(X,p,0)\tns[S]L_S^{-m}$, and since the direct sum for motives of degree $0$ is easy:
\[
(X,p,0)\oplus(Y,q,0)\dfn(X\coprod Y,p\coprod q,0),
\]
we can define the direct sum for general objects in $\Mot^0(S)$ as follows: let $r\geq\max(m,n)$. Then $(X,p,m)\oplus_S (Y,q,n)$ is, by definition:
\[
\left(\left(X\times_S(\PP^1_S)^{r-m}\right)\coprod \left(Y\times_S(\PP^1_S)^{r-n}\right),\left(p\tns \pi_2^{\tns(r-m)}\right)\coprod \left(q\tns \pi_2^{\tns(r-n)}\right),r\right).
\]

The importance of Chow motives lies in their universality for the \emphh{realization functors}. For us, this means that given a motive $(X,p,i)$, the correspondence $p$ induces a projector on any Weil cohomology $H^*(X)$, and therefore we obtain functors $H^*$ from the category $\Mot^0(S)$ to the same category where $H^*(X)$ would live, by sending $(X,p,i)$ to $pH^*(X)$. These functors are called \emphh{realization functors}, and we will concentrate in the $l$-adic \'etale and de Rham realizations.

\subsection{The motive \texorpdfstring{$\cM_n^{(M)}$}{Mn-sup-M} of Iovita and Spie\ss{}}
\label{sec:motiveMn}

Fix $M\geq 3$, and let $X_M/\QQ$ be the Shimura
curve parametrizing abelian surfaces with quaternionic multiplication by $\Rmax\subseteq \cB$
and level-$M$ structure, as described in Section~\ref{subsection:shimuracurves}.  Let $\pi\colon \cA\to X_M$ be the
universal abelian surface with quaternionic multiplication. Consider
the relative motive $h(\cA)$ as an object of $\Mot(X_M)$, where $h$ is the contravariant functor
\[
h\colon \Sch(X_M)\to \Mot^0_+(X_M)
\]
from the category of smooth and proper schemes over $X_M$ to the category of
Chow motives, as explained above. In general, the realization functors of a motive give the corresponding cohomology groups, as graded vector spaces with extra structures, and one cannot isolate the $i$th cohomology groups at the motivic level, without assuming what are known as ``standard conjectures''. If the underlying scheme has extra endomorphisms then one can hope to annihilate some of these groups and thus obtain only the desired degree. The following result establishes this for abelian schemes:
\begin{thm}[(Deninger-Murre, K\"unnemann)]
The motive $h(\cA)$ admits a canonical decomposition
\[ h(\cA)=\bigoplus_{i=0}^4 h^i(\cA),
\]
with $h^i(\cA)\cong \wedge^i h^1(\cA)$ and
$h^i(\cA)^{\vee}\cong h^{4-i}(\cA)(2)$.
\end{thm}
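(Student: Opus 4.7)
The plan is to follow the construction of Deninger--Murre, refined by Künnemann. The starting input is that for every nonzero integer $n$, the multiplication map $[n]\colon\cA\to\cA$ is a finite étale isogeny and its graph defines a correspondence in $\chow^2(\cA\times_{X_M}\cA)\tns\QQ$ (recall $\cA\to X_M$ has relative dimension $g=2$). These correspondences generate a commutative subring inside $\End_{\Mot(X_M)}(h(\cA))$, and on the fiber over any geometric point of $X_M$ they act with eigenvalues $n^0,n^1,\ldots,n^{2g}$ on singular cohomology. The relative fact we want is that there exist unique orthogonal projectors $\pi_0,\ldots,\pi_{2g}\in\chow^{2}(\cA\times_{X_M}\cA)\tns\QQ$ satisfying $\sum_i\pi_i=\Delta_\cA$ and $[n]\circ\pi_i=n^i\pi_i$.

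To construct them, I would argue as in \cite{kunnemann2001chow}: the Beauville vanishing $\prod_{i=0}^{2g}([n]-n^i\id)=0$ in $\End(h(\cA))$ is proved by reducing to fibers, where it follows from the classical Künneth decomposition of an abelian variety; Vandermonde interpolation on a range of distinct $n$'s then produces explicit $\QQ$-coefficient polynomials in the $[n]$'s giving the $\pi_i$. Orthogonality and the summation to $\Delta_\cA$ are formal from the eigenvalue property, and this lets us define $h^i(\cA)\dfn (\cA,\pi_i,0)$ and read off $h(\cA)=\bigoplus_{i=0}^{4}h^i(\cA)$.

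For the exterior-power identification $h^i(\cA)\cong\wedge^i h^1(\cA)$, I would use the group-scheme structure: the multiplication $m\colon\cA\times_{X_M}\cA\to\cA$, together with inversion $[-1]$, promotes $h(\cA)$ to a commutative Hopf algebra object in $\Mot(X_M)$ whose primitives (with respect to $m^*$) are exactly $h^1(\cA)$. Shermenev's theorem in the absolute case, extended by Künnemann to abelian schemes, shows that the antisymmetrizing projectors built from the symmetric group action on $h^1(\cA)^{\tns i}$ cut out precisely the $\pi_i$-isotypic summand; this is the step I expect to be the main obstacle, since it requires producing symmetrizer cycles on $\cA^{\times_{X_M} i}$ and checking compatibility with the projectors $\pi_i$ constructed in the previous step. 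Granting this yields $h^i(\cA)\cong\wedge^i h^1(\cA)$.

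Finally, Poincaré duality in $\Mot^0(X_M)$ gives $h(\cA)^\vee\cong h(\cA)(g)=h(\cA)(2)$ because $\cA\to X_M$ is smooth projective of relative dimension $2$. Under the transposition of correspondences one has ${}^t\pi_i=\pi_{2g-i}=\pi_{4-i}$ (this is built into the Deninger--Murre construction: the projectors are symmetric under the interpolation in the $[n]$'s up to the duality shift), and the identification $h^i(\cA)^\vee=(\cA,{}^t\pi_i,2)$ then gives $h^i(\cA)^\vee\cong h^{4-i}(\cA)(2)$, completing the statement.
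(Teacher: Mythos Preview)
The paper does not give its own proof of this statement: it simply cites \cite[Theorem~3.1, Proposition~3.3]{deninger1991mda} (invoking ``Fourier theory for abelian schemes'') and \cite{kunnemann2001chow} for the explicit projector formulas. Your sketch is essentially a summary of the K\"unnemann side of those references, so in spirit it matches what the paper is pointing to.

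One point deserves care. You write that the vanishing $\prod_{i=0}^{2g}([n]-n^i\,\id)=0$ in $\End_{\Mot(X_M)}(h(\cA))$ ``is proved by reducing to fibers.'' In the relative setting this is exactly the delicate step: rational equivalence of cycles on $\cA\times_{X_M}\cA$ is not detected fiberwise in any naive sense, so one cannot simply import the absolute K\"unneth decomposition over each geometric point. Deninger--Murre establish the eigenspace decomposition of the relative Chow groups directly via the Beauville--Fourier transform on $\chow(\cA/X_M)_\QQ$ (this is the ``Fourier theory'' the paper alludes to), and the polynomial identity then follows; K\"unnemann's explicit formulas also rely on this input. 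If you replace your fiber-reduction sentence with an appeal to the relative Fourier transform, the rest of your outline (Vandermonde interpolation for the $\pi_i$, the Hopf-algebra/antisymmetrizer argument for $h^i\cong\wedge^i h^1$, and the transposition ${}^t\pi_i=\pi_{2g-i}$ for the duality) is correct and is precisely what the cited papers do.
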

\begin{proof}
   This is originally proved in \cite[Theorem 3.1, Proposition 3.3]{deninger1991mda} using the so called ``Fourier theory for abelian schemes''. An explicit closed  formula is given in \cite{kunnemann2001chow}.
\end{proof}

Fix an integer $M\geq 3$. In \cite[Appendix]{iovita2003dpa} the authors
define a motive $\cM^{(M)}_n$ for \emph{even} $n\geq 2$. In this subsection we recall this construction. Let $e_2$ be the unique nonzero
idempotent in $\End(\wedge^2
h^1(\cA))=\End(h^2(\cA))$ such that
\[ x\cdot e_2=\nrd(x)e_2, \text{ for all }x\in \cB.
\]
Define $\epsilon_2$ to be the projector in the ring
$\Corr_{X_M}(\cA,\cA)$ such that
\[
(\cA,\epsilon_2)=\widetilde{\cM}^{(M)}_2\dfn\ker(e_2).
\]
Set $m$ as $n/2$ and define $\widetilde{\cM}^{(M)}_{n}\dfn
\Sym^{m}\widetilde{\cM}^{(M)}_2$. There is a symmetric pairing, given by the cup-product,
\[
h^2(\cA)\tns h^2(\cA)\to \wedge^4 h^1(\cA)\cong \QQ(-2).
\]
Let $\langle\cdot,\cdot\rangle$ be its restriction to $\widetilde{\cM}_2^{(M)}\tns \widetilde{\cM}_2^{(M)}$. It induces a \emphh{Laplace operator}
\[ \Delta_{m}\colon \widetilde{\cM}^{(M)}_{n} \to
\widetilde{\cM}^{(M)}_{n-2}(-2),
\]
given symbolically by
\[
\Delta_m(x_1x_2\cdots x_m)=\sum_{1\leq i<j\leq m} \langle x_i,x_j\rangle x_1\cdots \widehat x_i\cdots \widehat x_j\cdots x_m.
\]

\begin{lemma}[{\cite[Section 10.1]{iovita2003dpa}}]
$\ker(\Delta_m)$ exists (as a motive).
\end{lemma}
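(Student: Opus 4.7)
The plan is to construct an idempotent $e_m \in \End_{\Mot^0(X_M)}(\widetilde{\cM}_n^{(M)})$ whose image, in every realisation, is the classical kernel of the Laplacian. Since $\Mot^0(X_M)$ is pseudo-abelian, exhibiting such a projector suffices to promote $\ker(\Delta_m)$ to a bona fide relative Chow motive over $X_M$. The entire construction is the motivic incarnation of the classical $O(V)$-equivariant decomposition $\Sym^m V = \mathrm{Harm}_m \oplus L\cdot \Sym^{m-2} V$ for a vector space $V$ equipped with a nondegenerate symmetric bilinear form, where $L$ is multiplication by the metric tensor and $\mathrm{Harm}_m = \ker(\Delta)$.

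First I would build a motivic analogue $L\colon \widetilde{\cM}_{n-2}^{(M)}(-2)\to\widetilde{\cM}_n^{(M)}$ of multiplication by the metric. The perfect pairing $\langle\cdot,\cdot\rangle\colon \widetilde{\cM}_2^{(M)}\tns\widetilde{\cM}_2^{(M)}\to\QQ(-2)$ dualises to a morphism $g\colon\QQ(-2)\to\widetilde{\cM}_2^{(M)}\tns\widetilde{\cM}_2^{(M)}$ that factors through the symmetric part $\Sym^2\widetilde{\cM}_2^{(M)}\subseteq\widetilde{\cM}_4^{(M)}$. Tensoring $g$ with $\id_{\widetilde{\cM}_{n-2}^{(M)}}$ and composing with the symmetric-algebra multiplication $\widetilde{\cM}_4^{(M)}\tns\widetilde{\cM}_{n-2}^{(M)}\to\widetilde{\cM}_n^{(M)}$ produces the desired $L$.

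Second, I would establish the motivic $\mathfrak{sl}_2$-commutation relation
\[
\Delta_k\circ L \,-\, L\circ\Delta_{k-1} \;=\; c_k\cdot \id,\qquad c_k\in\QQ^{\times},
\]
valid for every $1\le k\le m$, where the scalar $c_k$ depends only on the rank of $\widetilde{\cM}_2^{(M)}$ and on $k$ (classically $c_k=\operatorname{rk}(\widetilde{\cM}_2^{(M)})+2(k-1)$). This is a direct symbolic computation from the defining formula for $\Delta$ and the explicit description of $L$, and can alternatively be checked on any faithful realisation, where it reduces to the well-known commutator of the Euclidean Laplacian with multiplication by the quadratic form. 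Crucially, we are in characteristic zero with $\QQ$-coefficients, so the $c_k$ are invertible.

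With this commutator in hand, one can write down an explicit polynomial
\[
e_m \;=\; \sum_{k=0}^{\lfloor m/2\rfloor} a_{m,k}\, L^{k}\circ\Delta_m^{k}\;\in\;\End_{\Mot^0(X_M)}\bigl(\widetilde{\cM}_n^{(M)}\bigr),
\]
with rational coefficients $a_{m,k}$ (inductively determined from the $c_k$) satisfying $e_m^2=e_m$ and $\Delta_m\circ e_m=0$, so that its image is precisely $\ker(\Delta_m)$ in each realisation. The main obstacle is not conceptual but bookkeeping: one must carefully track the Tate twists accumulated by each power of $L$ and $\Delta$, and verify that the symbolic manipulations giving the coefficients $a_{m,k}$ lift to identities of algebraic cycles in the appropriate Chow groups. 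Once this is checked, pseudo-abelianness of $\Mot^0(X_M)$ produces the motive $\ker(\Delta_m)\cong\bigl(\widetilde{\cM}_n^{(M)},\,e_m\bigr)$, and the lemma follows.
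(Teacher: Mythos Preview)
Your approach is correct and rests on the same classical fact as the paper's---the harmonic decomposition $\Sym^m V=\mathrm{Harm}_m\oplus L\cdot\Sym^{m-2}V$ for a rank-$3$ self-dual object---but you package it differently. The paper constructs the \emph{complementary} projector in a single stroke: after introducing the same multiplication-by-metric map (there denoted $\lambda_{m-2}$, your $L$), it asserts that $\Delta_m\circ\lambda_{m-2}$ is an isomorphism of $\widetilde{\cM}_{n-2}^{(M)}$ and sets
\[
\pr\;=\;\lambda_{m-2}\circ(\Delta_m\circ\lambda_{m-2})^{-1}\circ\Delta_m,
\]
so that $\ker(\Delta_m)=\ker(\pr)$. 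Your route makes the ``clearly an isomorphism'' step explicit via the $\mathfrak{sl}_2$-commutator $\Delta_k L-L\Delta_{k-1}=c_k\cdot\id$ and then assembles a polynomial projector $e_m=\sum_k a_{m,k}L^{k}\Delta^{k}$ directly onto the harmonic part. The paper's argument is shorter but hides exactly the same computation inside the inversion of $\Delta_m\circ\lambda_{m-2}$; yours is more explicit and in particular makes transparent why the construction is available over $\QQ$ (the scalars $c_k=3+2(k-1)$ are nonzero).

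Two small cautions. First, drop the remark that the commutator identity ``can alternatively be checked on any faithful realisation'': faithfulness of realisation functors on relative Chow motives is not known, so this is not a valid shortcut. Your primary justification---that the identity is a formal symbolic computation from the definitions of $\Delta$ and $L$, hence holds in any $\QQ$-linear rigid tensor category---is the correct one and already suffices. Second, the notation $L^k$ and $\Delta_m^k$ is abusive, since these are composites of maps between \emph{different} symmetric powers (with accumulating Tate twists); write them as $\lambda_{m-1}\circ\cdots\circ\lambda_{m-k}$ and $\Delta_{m-k+1}\circ\cdots\circ\Delta_m$, or introduce notation such as $L^{(k)}$, $\Delta^{(k)}$ for the iterated composites.
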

\begin{proof}[Sketch of proof]
We will rewrite $\ker(\Delta_m)$ as the kernel of a certain projector, and use the fact that, even if the category of motives is not abelian, at least it has kernels of projectors.

Let $\lambda_{m-2}$ be the morphism
\[ \lambda_{m-2}\colon \widetilde{\cM}_{n-2}^{(M)}\to
\widetilde{\cM}_n^{(M)}(2)
\]
given symbolically by $\lambda_{m-2}(x_1x_2\cdots
x_{m-2})=x_1x_2\cdots x_{m-2} \mu$, where
\[ \mu\colon \QQ\to \widetilde{\cM}_2^{(M)}(2)
\]
is the dual of $\langle\cdot,\cdot\rangle$ twisted by $2$.

Clearly $\Delta_{m}\circ \lambda_{m-2}$ is an isomorphism. Let then
\[ \pr\dfn \lambda_{m-2}\circ (\Delta_{m}\circ
\lambda_{m-2})^{-1}\circ \Delta_m,
\]
which is a projector, so $\ker\Delta_m=\ker(\pr)$ can be written as the kernel of a projector.
\end{proof}

Define the correspondence $\epsilon_n$ in $\Corr_{X_M}(\cA^m,\cA^m)$ to be such that
\[
(\cA^m,\epsilon_n)=(\cM_{n})^{(M)}\dfn \ker(\Delta_m).
\]
\subsection{The motive \texorpdfstring{$\cD_n$}{Dn}}
\label{sec:motiveDn}
Fix $A$ an abelian surface with quaternionic multiplication. Assume
also that $A$ has CM. By Remark~\ref{rmk:aisee}, $A$ is isomorphic to
$E\times E$. Fix such an isomorphism.

Let $\fS_{n}$ be the symmetric group on $n$ letters, and consider the
wreath product $\Xi_{n}\dfn \mu_2\wr \fS_{n}$, which can be described
as the semidirect product
\[ \Xi_{n}\dfn (\mu_2)^{n}\rtimes \fS_{n},
\]
with $\sigma\in\fS_{n}$ acting on $(\mu_2)^{n}$ by
$(x_1,\ldots,x_{n})^\sigma=(x_{\sigma(1)},\ldots,x_{\sigma(n)})$. This is isomorphic to the group of signed permutation matrices of degree $n$.

The group $\Xi_{n}$ acts on $E^{n}$ as follows: each of the
copies of $\mu_2$ acts by multiplication by $-1$ on the corresponding
copy of $E$, and $\fS_{n}$ permutes the $n$ copies.

Let $j\colon \Xi_{n}\to \{\pm 1\}$ be the homomorphism which sends
$-1\in\mu_2$ to $-1$, and which is the sign character on $\fS_{n}$,
and let
\[ \epsilon_E\dfn \frac{1}{2^{n}(n)!}\sum_{\sigma\in\Xi_{n}}
j(\sigma)\sigma\in\QQ[\Aut(E^{n})],
\]
which is an idempotent in the rational group ring of $\Aut(E^{n})$.

By functoriality, $\epsilon_E$ induces a projector in
$\Corr_{X_M}(E^{n},E^{n})$, inducing an endomorphism on the different cohomology groups.

\begin{lemma}[{\cite[Lemma 1.8]{bdp2008}}]\hfill
\begin{enumerate}
\item The image of $\epsilon_E$ action on $\Het^*(E^{n},\QQ_l)$ is
\[
\epsilon_E \Het^*(E^{n},\QQ_l)=\Sym^{n} \Het^1(E,\QQ_l).
\]
\item The image of $\epsilon_E$ acting on $\Hdr^*(E^{n})$ is
\[ \epsilon_E\Hdr^*(E^{n})=\Sym^{n}\Hdr^1(E).
\]
\end{enumerate}
\end{lemma}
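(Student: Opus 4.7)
My plan is to compute the image of $\epsilon_E$ on each Weil cohomology theory $H^*$ using the K\"unneth decomposition $H^*(E^n)\cong H^*(E)^{\otimes n}$, which is equivariant for the natural action of $\Xi_n=(\mu_2)^n\rtimes \fS_n$. Since $\epsilon_E$ is built as an average over $\Xi_n$ weighted by the character $j$, and $\Xi_n$ is a semidirect product, the first step is to factor $\epsilon_E$ as a composition of two commuting idempotents: the average over the normal subgroup $(\mu_2)^n$ weighted by $j|_{(\mu_2)^n}$, followed by the average over $\fS_n$ weighted by the sign character. Computing the two steps separately reduces the statement to a direct inspection.

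Step one: the group $(\mu_2)^n$ acts factor-by-factor on $H^*(E)^{\otimes n}$, with the $k$-th copy of $\mu_2$ acting on the $k$-th tensor factor through $[-1]_E^*$. For an abelian variety, multiplication by $-1$ acts on $H^j$ by $(-1)^j$, so on the K\"unneth summand $H^{j_1}(E)\otimes\cdots\otimes H^{j_n}(E)$ the element $(x_1,\ldots,x_n)\in(\mu_2)^n$ acts by $\prod_k x_k^{j_k}$. Since $j|_{(\mu_2)^n}$ is $(x_1,\ldots,x_n)\mapsto\prod_k x_k$, averaging yields the factor $\prod_k \frac{1}{2}\sum_{x_k\in\mu_2}x_k^{j_k+1}$, which is $1$ precisely when every $j_k$ is odd and $0$ otherwise. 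Since $\dim E=1$, the only surviving summand is $H^1(E)^{\otimes n}$.

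Step two: on $H^1(E)^{\otimes n}$ the symmetric group $\fS_n$ acts by permuting tensor factors, but the K\"unneth isomorphism is graded-commutative, so transposing two adjacent factors (each of odd degree one) introduces a Koszul sign of $(-1)^{1\cdot 1}=-1$. Consequently $\sigma\in\fS_n$ acts on $H^1(E)^{\otimes n}$ as $\operatorname{sign}(\sigma)$ times the naive permutation. The remaining part of $\epsilon_E$ is $\frac{1}{n!}\sum_{\sigma\in\fS_n}\operatorname{sign}(\sigma)\,\sigma$, so the two sign characters cancel and we are left with the naive symmetrization operator, whose image is by definition $\Sym^n H^1(E)$. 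This argument applies verbatim in $\ell$-adic \'etale cohomology and in algebraic de Rham cohomology, both of which are Weil cohomology theories satisfying K\"unneth with the correct graded-commutative sign convention and for which $[-1]^*$ acts on $H^1(E)$ by $-1$.

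The only subtle point will be to make explicit the Koszul sign in the $\fS_n$-action on an odd-degree tensor power; once that sign is put on the table, both identities drop out of a one-line calculation with characters of the abelian group $(\mu_2)^n$ and the standard symmetrization idempotent in $\QQ[\fS_n]$.
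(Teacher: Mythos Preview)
Your proposal is correct and follows essentially the same approach as the paper: both arguments use the K\"unneth decomposition, observe that the $(\mu_2)^n$-average weighted by $j$ kills every summand except $H^1(E)^{\otimes n}$, and then note that the $\fS_n$-action on this summand is the naive permutation twisted by the sign (Koszul rule), so that the remaining $\operatorname{sign}$-weighted average becomes the symmetrization projector onto $\Sym^n H^1(E)$. Your write-up is simply more explicit than the paper's about the factorization $\epsilon_E = P_{(\mu_2)^n}\cdot P_{\fS_n}$ and the Koszul sign, but the underlying argument is the same.
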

\begin{proof}
Denote by $H$ either $\Hdr$ or $\Het$. First note that $-1$ acts as the identity on $H^0(E,\QQ_l)$ and $H^2(E,\QQ_l)$ and as $-1$ on $H^1(E,\QQ_l)$. Therefore all terms in the K\"unneth decomposition
\[
H^*(E^n)=\bigoplus_{(i_1,\ldots,i_n)} H^{i_1}(E)\tns\cdots\tns H^{i_n}(E)
\]
vanish under the action of $\epsilon_E$ except for $H^1(E)^{\tns n}$. The action of $\fS_n$ on this factor is the permutation twisted by the sign character, and thus it induces the projection onto $\Sym^n H^1(E)$.

See also the discussion in~\cite[Lemma~1.8]{bdp2008}.
\end{proof}

 We want to
generalize the construction of~\cite{iovita2003dpa} in the spirit of~\cite{bdp2008}. Let $n$ be a positive even integer, and set $m\dfn n/2$.

\begin{df}
  The motive $\cD_n^{(M)}$ over $X_M$ is defined as:
\[ \cD_n^{(M)}\dfn (\cA^m\times E^n,\epsilon_n^{(M)})\dfn \cM_n^{(M)}\tns (E^{n},\epsilon_E),
\]
where $E^{n}\to X_M$ is seen as a constant family $E^n\times X_M$, with fibers $E^{n}$.
\end{df}

We descend this construction to the Shimura curve $X$. For
that, consider the group $G=(\Rmax/M\Rmax)\cong \GL_2(\ZZ/M\ZZ)$,
which acts canonically (through $X$-automorphisms) on $X_M$, on
$\cA^m$ and on $E^{n}$. Hence we can consider the projector
\[ p_G\dfn \frac{1}{|G|}\sum_{g\in G} g\in \Corr_X(\cA^m\times
E^{n},\cA^m\times E^{n}).
\]

The projector $p_G$ commutes with both $\epsilon_n$ and $\epsilon_E$. In fact, $p_G$ acts trivially on $E^{n}$. So the composition of these projectors is also a projector, which will be denoted $\epsilon$.

\begin{df}
\label{df:gen-kugasato-motive}
The \emphh{generalized Kuga-Sato motive} $\cD_{n}$ is
defined to be
\[ \cD_{n}\dfn  (\cA^m\times E^n,\epsilon)\dfn p_G\left(\cD_n^{(M)}\right)=p_G\left(\cM_{n}^{(M)}\right)\tns
(E^{n},\epsilon_E).
\]
\end{df}

\subsection{The $p$-adic \'etale realization}

Consider the $p$-adic \'etale sheaf $R^2\pi_* \QQ_p$, which has fibers at
each geometric point $\tau\to X_M$ given by $\Het^2(\cA_\tau,\QQ_p)$. We
want to work with a subsheaf of $R^2\pi_* \QQ_p$. For this, note
that the action of $\Rmax$ on $\cA$ induces an action of $\cB^\times$ on
$R^2\pi_*\QQ_p$.

Consider the $p$-adic \'etale sheaf
\[ \bbL_2\dfn \bigcap_{b\in \cB^\times} \ker\left(b-\nrd(b)\colon
R^2\pi_* \QQ_p\to R^2\pi_* \QQ_p \right)\subseteq R^2\pi_*\QQ_p,
\]
which is the subsheaf on which $\cB^\times$ acts as the reduced
norm $\nrd$ of $\cB$. It is a $3$-dimensional locally-free sheaf on
$X_M$. Set $m$ to be $n/2$, and consider the map $\Delta_{m}\colon \Sym^m\bbL_2\to
\left(\Sym^{m-2}\bbL_2\right)(-2)$ given by
the Laplace operator. That is,
\[ \Delta_{m}(x_1\cdots x_m)=\sum_{1\leq i<j\leq m} (x_i,x_j)x_1\cdots
\widehat{x}_i\cdots\widehat{x}_j\cdots x_m,
\]
where $(\cdot,\cdot)$ is the non-degenerated pairing induced from
the cup product and the trace: $(x,y)=\tr(x\cup y)$. Define also
\[ \bbL_n\dfn \ker \Delta_{m},
\]
and
\[
\bbL_{n,n}\dfn \bbL_n\tns \Sym^n \Het^1(E,\QQ_p).
\]
The following lemma gives the $p$-adic \'etale realization of the
motive $\cD_n$.

\begin{lemma}
Consider $\cD_n$ as an absolute motive over $\QQ$. Let $H_p(-)$ be the $p$-adic realization functor. Then:
\[ H_p(\cD_{n})\cong
\Het^1\left(\overline{X_M},\bbL_{n,n}\right)^G=\Het^1\left(\overline{X_M},\bbL_{n}\right)^G\tns \Sym^{n}
\Het^1(E,\QQ_p).
\]
\end{lemma}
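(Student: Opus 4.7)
The plan is to compute $H_p(\cD_n) = \epsilon \cdot \Het^*(\overline{\cA^m \times_{X_M} E^n}, \QQ_p)$, where $\epsilon = p_G \circ (\epsilon_n \tns \epsilon_E)$, by applying the three projectors successively. Since $E^n \to X_M$ is a constant family, $\overline{\cA^m \times_{X_M} E^n}$ is $\overline{\cA^m} \times \overline{E^n}$ and the K\"unneth formula yields a $G$-equivariant isomorphism
\[
\Het^*(\overline{\cA^m \times_{X_M} E^n}, \QQ_p) \cong \Het^*(\overline{\cA^m}, \QQ_p) \tns[\QQ_p] \Het^*(\overline{E^n}, \QQ_p),
\]
on which $\epsilon_n$ acts on the first factor and $\epsilon_E$ on the second. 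The second factor is handled directly by the lemma quoted from \cite{bdp2008}, which gives $\epsilon_E \cdot \Het^*(\overline{E^n}, \QQ_p) = \Sym^n \Het^1(\overline E, \QQ_p)$.

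For the first factor I would apply the Leray spectral sequence for $\pi^m \colon \cA^m \to X_M$. Deligne's degeneration theorem for smooth projective morphisms collapses it at $E_2$, giving a $G$-equivariant decomposition
\[
\Het^*(\overline{\cA^m}, \QQ_p) \cong \bigoplus_{i,j} \Het^i\bigl(\overline{X_M},\, R^j \pi^m_* \QQ_p\bigr),
\]
with relative K\"unneth identifying each $R^j \pi^m_* \QQ_p$ as a sum of tensor products of $R^{j_k}\pi_* \QQ_p$'s. The idempotent $\epsilon_2$ is tailored by the equation $x \cdot e_2 = \nrd(x)\, e_2$ to act as the projection to $\bbL_2$ on $R^2 \pi_* \QQ_p$ and as zero on the remaining $R^j$, because $\cB^{\times}$ acts on $R^j \pi_* \QQ_p$ through $\nrd^{j/2}$ in even degrees $0, 4$ (with a different scalar) and through an irreducible non-scalar representation in odd degrees. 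Passing to $\Sym^m \widetilde\cM_2^{(M)}$ and then to $\ker(\Delta_m)$ extracts, fiberwise, the subsheaf $\bbL_n \subset R^n \pi^m_* \QQ_p$, which is precisely the definition of the sheaf $\bbL_n$ given in the statement.

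The last ingredient, which I expect to be the main obstacle, is to see that $\epsilon_n$ further isolates the Leray degree $i = 1$ contribution, producing $\Het^1(\overline{X_M}, \bbL_n)$ rather than the full $\bigoplus_i \Het^i(\overline{X_M}, \bbL_n)$. This is the role of the Laplacian-kernel construction: the projectors $\epsilon_n^{(M)}$ coincide, up to the action of $\cB$, with K\"unneth-type projectors selecting middle-dimensional cohomology of the total space, and on a smooth projective curve the only Leray piece that can land in the middle degree $n+1$ of $\cA^m$ is $\Het^1(\overline{X_M}, R^n \pi^m_* \QQ_p)$. For this identification I would appeal to the parallel computation carried out in the appendix of \cite{iovita2003dpa}, which establishes exactly this property for the motive $\cM_n^{(M)}$.

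Finally, $p_G = |G|^{-1} \sum_{g \in G} g$ is the standard averaging projector, and it acts trivially on the $E^n$ factor, so its image is
\[
H_p(\cD_n) = \Het^1(\overline{X_M}, \bbL_n)^G \tns \Sym^n \Het^1(\overline E, \QQ_p) = \Het^1(\overline{X_M}, \bbL_{n,n})^G,
\]
using the defining relation $\bbL_{n,n} = \bbL_n \tns \Sym^n \Het^1(\overline E, \QQ_p)$ and the fact that $G$-invariants commute with tensoring by a trivial $G$-module.
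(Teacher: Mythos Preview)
Your overall strategy---K\"unneth to separate the $E^n$ factor, then Leray for $\pi^m\colon\cA^m\to X_M$, then apply the projectors and take $G$-invariants---is exactly the paper's approach, only spelled out in more detail. The paper compresses steps 1--4 into the single assertion that the relative $p$-adic realization of $\cD_n^{(M)}$ is the complex $\bbL_n[-n]\tns\Sym^n\Het^1(E,\QQ_p)$ on $X_M$.

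The one place where your reasoning goes wrong is the paragraph isolating the Leray degree $i=1$. You claim that $\epsilon_n$ ``coincides, up to the action of $\cB$, with K\"unneth-type projectors selecting middle-dimensional cohomology of the total space''. This is not what $\epsilon_n$ does: it is a \emph{relative} correspondence in $\Corr_{X_M}(\cA^m,\cA^m)$, so when pushed to an absolute correspondence it acts on each Leray graded piece $\Het^i(\overline{X_M},R^j\pi^m_*\QQ_p)$ only through the coefficient sheaf $R^j\pi^m_*\QQ_p$, projecting onto the subsheaf $\bbL_n$ inside $R^n\pi^m_*\QQ_p$ and killing all other $j$. It does nothing to single out a particular $i$, so after applying $\epsilon_n$ you are left with $\bigoplus_{i=0}^2\Het^i(\overline{X_M},\bbL_n)$, not just the $i=1$ term.

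The correct reason---and this is what the paper invokes---is simply that the cohomology of the local system $\bbL_n$ on the curve $\overline{X_M}$ is concentrated in degree $1$: one has $\Het^0(\overline{X_M},\bbL_n)=0$ because $\bbL_n$ has no global sections (it is a nontrivial irreducible local system, with no monodromy invariants), and $\Het^2(\overline{X_M},\bbL_n)=0$ by Poincar\'e duality. This vanishing is a property of the sheaf on the base, not of the projector.
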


\begin{proof} First, note that the $p$-adic realization of the motive
$\cD_{n}^{(M)}$, as thought of as in the derived category, is the complex of $\QQ_p$-sheaves
\[ \bbL_{n}[-n]\tns \Sym^{n} \Het^1(E,\QQ_p).
\]
concentrated in degree $-n$. Then, we just need to compute:
\begin{align*}
H_p(\cD_{n})&=(p_G)_*\left(H^*\left(\overline{X_M},\bbL_n[-n]\tns \Sym^{n}\Het^1(E,\QQ_p)\right)\right)\\
&=\Het^{*-2n}\left(\overline{X_M},\bbL_{n}\right)^G\tns \Sym^{n} \Het^1(E,\QQ_p).
\end{align*}
which follows from the
cohomology of $\bbL_{n}$ being concentrated in degree $1$ and from the
K\"unneth formula.
\end{proof}

\subsection{Semistability and the de Rham realization}
The Hodge filtration on $\Hdr^1(E)\dfn \Hdr^1(E,\QQ_p)$, which is equivalent to the exact sequence
\[
0\to H^0(E,\Omega^1_{E/\QQ_p})\to \Hdr^1(E)\to H^1\left(E,\cO_{E}\right)\to 0
\]
induces a filtration on $\Sym^{n} \Hdr^1(E)$. Write $H_j$ for its $j$th step:
\[
H_j\dfn\Fil^j\left(\Sym^{n}\Hdr^1(E)\right).
\]
The following lemma follows easily from the definitions.
\begin{lemma}
\label{lemma:filderham}
\[
H_j=\begin{cases}
\Sym^{n}\Hdr^1(E) &\If j\leq 0\\
\Sym^{j} H^0(E,\Omega^1_E)\tns\Sym^{2n-j}\Hdr^1(E)&\If 1\leq j\leq n\\
0&\Else .
\end{cases}
\]

\end{lemma}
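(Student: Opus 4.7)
The claim is a direct unwinding of the definition of the induced filtration on a symmetric power. The plan is as follows. First I would recall that for an elliptic curve $E$, the space $\Hdr^1(E)$ is two-dimensional, and the Hodge filtration is the two-step filtration
\[
\Hdr^1(E) = \Fil^0\Hdr^1(E) \supsetneq \Fil^1 \Hdr^1(E) = H^0(E,\Omega^1_E) \supsetneq \Fil^2 \Hdr^1(E) = 0,
\]
whose middle term is a line.

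Next, I would apply the standard construction of the induced filtration on a symmetric power: for any filtered $K$-vector space $(V,\Fil^\bullet V)$, the induced filtration on $\Sym^n V$ is
\[
\Fil^j \Sym^n V = \sum_{a_1+\cdots+a_n \geq j} \img\!\left(\Fil^{a_1}V \tns \cdots \tns \Fil^{a_n}V \to \Sym^n V\right).
\]
Applied to $V=\Hdr^1(E)$, the vanishing $\Fil^2 V = 0$ forces each $a_i\in\{0,1\}$, so the condition $\sum a_i \geq j$ reduces to the requirement that at least $j$ of the $n$ factors lie in $H^0(E,\Omega^1_E)$.

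To read off the stated formula, I would pick $\omega$ spanning the line $H^0(E,\Omega^1_E)$ and any lift $\eta\in\Hdr^1(E)$ of a basis of $H^1(E,\cO_E)$. Then $\{\omega^i\eta^{n-i}\}_{i=0}^n$ is a basis of $\Sym^n\Hdr^1(E)$, and by the previous paragraph $H_j$ is the span of those $\omega^i\eta^{n-i}$ with $i\geq j$. This span coincides with the image of the natural multiplication map
\[
\Sym^j H^0(E,\Omega^1_E)\tns\Sym^{n-j}\Hdr^1(E) \to \Sym^n\Hdr^1(E),
\]
and from this the three cases $j\leq 0$ (the condition is vacuous, so $H_j$ is everything), $1\leq j\leq n$ (the displayed product), and $j>n$ (no basis vector qualifies, so $H_j=0$) of the formula read off immediately.

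There is no genuine obstacle here: the lemma is bookkeeping once one has the definition of the induced filtration on a symmetric power together with the explicit shape of the Hodge filtration on an elliptic curve. The only point worth verifying carefully is that the multiplication map is surjective onto the span described above rather than merely landing in it, and this is immediate from the basis computation.
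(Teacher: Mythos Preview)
Your argument is correct and is precisely the unwinding of definitions the paper has in mind; the paper gives no proof beyond the sentence ``The following lemma follows easily from the definitions.'' Note that in writing the multiplication map you have (correctly) used $\Sym^{n-j}\Hdr^1(E)$ rather than the $\Sym^{2n-j}\Hdr^1(E)$ that appears in the displayed statement; the latter is a typo, as the dimension count $\dim H_j = n-j+1$ confirms.
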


\begin{thm}[(Faltings, Iovita-Spie\ss{})]
\label{thm:is510}
There is a canonical isomorphism of filtered isocrystals on $\cH_p$:
\[
\pi^*\cHdr^1(\cA/X_M)\cong \cE(M_2).
\]
This isomorphism takes the $\cB_{\QQ_p^\ur}^\times$-action on the left-hand side to the action by $\rho_2$ in the right-hand side.
\end{thm}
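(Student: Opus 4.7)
The plan is to realize both sides as filtered $F$-isocrystals on the formal model $\widehat{\cH}$ of $\cH_p$ and exhibit an explicit isomorphism, using Drinfel'd's theorem on the moduli interpretation of $\widehat{\cH}$ which underlies the \v{C}erednik-Drinfel'd uniformization invoked in Theorem~\ref{thm:cer-drin}.

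First I would set up the $p$-adic uniformization of the universal object. By Drinfel'd's theorem, $\widehat{\cH}$ (more precisely, its base change to $\ZZ_p^\ur$) represents the moduli functor of deformations of a fixed special formal $\cO_{\cB_p}$-module $\mathbf{X}$ of height $4$ over $\ol\FF_p$, together with $M$-level structure after twisting. Via the $p$-adic uniformization of $X_M$, the pullback of the universal abelian surface $\cA\to X_M$ along the uniformization map $\pi\colon\cH_p\to X_M^\an$ has $p$-divisible group (up to isogeny) canonically identified with the universal deformation of $\mathbf{X}$. This is the geometric content that makes the matching possible at all: the $\cB_p^\times$-action on the left-hand side of the claimed isomorphism comes precisely from the $\cO_{\cB_p}$-structure on the universal special formal module.

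Next I would compute the relative crystalline/de Rham cohomology of this universal object. By Grothendieck-Messing theory (or Berthelot-Breen-Messing), $R^1\pi_*\cO_{\cris}\tns\QQ$ of the universal $p$-divisible group is canonically isomorphic, as an $\cO_{\widehat\cH}$-module equipped with Frobenius and connection, to $M_2(\QQ_p)\tns_{\QQ_p}\cO_{\widehat\cH}$; the $\cB_p$-module structure comes from the right action of $\cO_{\cB_p}$ on the Dieudonné module, which after trivialising is $M_2(\QQ_p)$ acting on itself by right multiplication. A direct check at the supersingular base point shows that Frobenius is given by right multiplication by an element of $M_2(\QQ_p)$ of reduced norm $p$, which one can take to be $\smtx 0{-p}{-1}0$; this is exactly the operator $\phi$ of Subsection~\ref{sec:specialcase}. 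For the Hodge filtration, the universal Lie algebra of the formal $\cO_{\cB_p}$-module at a point $z\in\cH_p$ is cut out by the condition $\Psi(\alpha)v = \alpha v$ for the tautological character, which translates into $F^1 M_2$ being exactly the submodule of matrices $\smtx{zf}{zg}{f}{g}$ described in Section~\ref{sec:specialcase}. These identifications descend through the action of $\Gamma$ to an isomorphism on $X_M$.

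Finally I would verify compatibility with the two $\cB_p^\times$-actions. On the left-hand side the $\cB^\times$-action is induced from the QM structure on $\cA$ via functoriality of $\cHdr^1$. On the right-hand side the action $\rho_2$ is $B\mapsto B\overline A$. The appearance of $\overline A$ (rather than $A$) is forced by the fact that Frobenius is right multiplication by a norm-$p$ element and must commute with the $\cB_p^\times$-action: only the action by $\bar A$ commutes with right multiplication, since $\bar A\cdot\smtx 0{-p}{-1}0=\smtx 0{-p}{-1}0\cdot\bar A$ up to the trace-zero part — equivalently, this is the unique $\cB_p^\times$-action on $M_2$ making the tensor functor $V\mapsto \cE(V)$ of Subsection~\ref{subsection:cohomologyXGamma} exact and matching the stalks $\cE(M_2)_\Psi$ in~\cite{iovita2003dpa}.

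The main obstacle is purely bookkeeping: one must track conventions for left versus right actions, the choice of the uniformization $\iota_p\colon B_p\xrightarrow{\sim} M_2(\QQ_p)$, the normalisation of the Frobenius element of norm $p$, and the identification of the two fixed points $z_0,\bar z_0$ of the torus $\Psi(K_p^\times)$. Once these are fixed, both filtered $F$-isocrystals are rigid enough (the $\cB_p$-action leaves only a one-dimensional space of invariants in each graded piece) that the two descriptions coincide on fibres, and the resulting fibrewise isomorphism glues by the moduli-theoretic construction. For the deduction of the full statement from the analogous crystalline result, one invokes Faltings' comparison theorem, as in~\cite[Theorem~7.5]{coleman2003hss}.
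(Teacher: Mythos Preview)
The paper does not give its own proof of this theorem: it simply cites \cite[Lemma~5.10]{iovita2003dpa}. Your sketch is therefore not being compared against an argument in the present paper, but against the cited reference. What you outline --- Drinfel'd's moduli interpretation of $\widehat{\cH}$, the identification of the Dieudonn\'e module of the universal special formal $\cO_{\cB_p}$-module with $M_2(\QQ_p)\tns\cO_{\widehat\cH}$ via Grothendieck--Messing theory, and the explicit computation of Frobenius and the Hodge filtration at stalks --- is indeed the strategy carried out in \cite{iovita2003dpa}, so your approach is the expected one.

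A couple of points where your sketch is looser than the cited proof: the claim that ``both filtered $F$-isocrystals are rigid enough \ldots that the two descriptions coincide on fibres, and the resulting fibrewise isomorphism glues'' is not quite how one argues. The isomorphism is produced globally from the universal Dieudonn\'e crystal, not glued from fibres; rigidity of stalks alone would not furnish a map of isocrystals with connection. Also, your justification for why the $\cB_p^\times$-action must be $\rho_2$ (via commutation with Frobenius) is heuristic: in the actual argument the action is identified directly, since $\cB_p$ acts on the Dieudonn\'e module of the universal object through the QM structure, and after the chosen trivialisation this is right multiplication by $\bar A$ by definition of how endomorphisms act on cohomology. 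These are refinements rather than gaps.
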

\begin{proof}
  See \cite[Lemma~5.10]{iovita2003dpa}.
\end{proof}

Consider the representation $(V_n,\rho_1)$ of $\GL_2$ constructed in Subsection~\ref{subsection:hdrXEV}, and let $\rho_2$ be the one-dimensional representation of $\GL_2$ given by $\det^m$. Then the pair $(V_n,\rho_1,\rho_2)$ induces a filtered convergent $F$-isocrystal
$\cV_n=\cE(V_n\{m\})$ as  described in the first paragraph of Subsection~\ref{subsection:cohomologyXGamma} and in~\cite[Section~4]{iovita2003dpa}. It turns out that it is regular (see~\cite{iovita2003dpa} after Lemma~$4.3$). Moreover, a simple computation using the compatibility of the isomorphism of Theorem~\ref{thm:is510} with tensor products gives the following consequence:
\begin{cor}
\label{cor:is510-0}
\[
\bigcap_{x\in \cB^\times}\ker\bigg((x-\nrd(x))\colon\cE\left(\wedge^2 M_2\right)\to\cE\left(\wedge^2 M_2\right)\bigg)\cong \cV_2.
\]
\end{cor}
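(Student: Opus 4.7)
The plan is to derive this as a direct corollary of Theorem~\ref{thm:is510} by a representation-theoretic decomposition of $\wedge^2 M_2$. First, applying $\wedge^2$ to the isomorphism of that theorem, and using that $V\mapsto\cE(V)$ is an exact tensor functor together with the identification $\cHdr^2(\cA/X_M)\cong \wedge^2\cHdr^1(\cA/X_M)$ for the abelian surface $\cA$, one obtains an isomorphism of filtered isocrystals $\cE(\wedge^2 M_2)\cong \pi^*\cHdr^2(\cA/X_M)$ that intertwines the $\cB^\times$-action on the right with the action of $\wedge^2\rho_2$ on the left. Since $\cB^\times$ embeds through $\iota_p$ as a Zariski-dense subset of $\GL_2(\QQ_p)$ and $\nrd$ pulls back to $\det$, the simultaneous kernel $\bigcap_{x\in\cB^\times}\ker(x-\nrd(x))$ on $\cE(\wedge^2 M_2)$ coincides with $\cE$ applied to the $\det$-isotypic sub-representation of $(\wedge^2 M_2,\wedge^2\rho_2)$.

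The next step is to decompose $(M_2,\rho_1,\rho_2)$ as a $(\GL_2\times\GL_2)$-representation. Writing $M_2=V\otimes V^*$ with $V=\QQ_p^2$, the action $\rho_1$ is the standard action on $V$ (trivial on $V^*$), while $\rho_2(A)\colon B\mapsto B\overline A$ acts trivially on $V$ and, on $V^*$ viewed via column vectors, by $A\mapsto \overline A^T$. A direct check that $JAJ^{-1}=\overline A^T$ for $J=\smtx{0}{-1}{1}{0}$ shows the $\rho_2$-representation on $V^*$ is isomorphic to the standard $V$. Hence $M_2\cong V\otimes V$ as a $(\rho_1,\rho_2)$-bi-representation, with each factor carrying the standard representation under exactly one of the two actions. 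The Clebsch--Gordan-type identity $\wedge^2(W_1\otimes W_2)=(\Sym^2 W_1\otimes\wedge^2 W_2)\oplus(\wedge^2 W_1\otimes \Sym^2 W_2)$ then gives
\[
\wedge^2 M_2\cong\bigl(\Sym^2 V\otimes\det\bigr)\oplus\bigl(\det\otimes\Sym^2 V\bigr),
\]
where the convention is that the first tensor factor in each summand carries the $\rho_1$-action and the second the $\rho_2$-action.

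Inspecting the summands, the $\rho_2$-action is $\det$ on the first summand and the irreducible $\Sym^2 V$ on the second, so the $\det$-isotypic component of $\wedge^2\rho_2$ is precisely the first summand. As a $(\GL_2\times\GL_2)$-bi-representation this is $(\Sym^2 V,\rho_1,\det)$, which is exactly the datum $V_2\{1\}$ defining $\cV_2=\cE(V_2\{1\})$. Applying the functor $\cE$ then concludes the argument. There is essentially no real obstacle: the proof is the ``simple computation'' alluded to in the paper, and the only subtlety requiring care is the identification $(V^*,\rho_2)\cong V$ through conjugation by $J$, i.e., through the symplectic structure on $V$.
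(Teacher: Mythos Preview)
Your argument is the ``simple computation'' the paper has in mind: reduce the $\cB^\times$-kernel condition to extracting the $\det$-isotypic piece of $(\wedge^2 M_2,\wedge^2\rho_2)$, then decompose via $M_2\cong V\otimes V$ and the identity $\wedge^2(V\otimes V)\cong(\Sym^2 V\otimes\wedge^2 V)\oplus(\wedge^2 V\otimes\Sym^2 V)$. The identification of the resulting summand with $V_2\{1\}$ is correct.

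One slip to fix: you assert that $\cB^\times$ embeds via $\iota_p$ into $\GL_2(\QQ_p)$, but $\iota_p$ in this paper is the splitting of the \emph{definite} algebra $B$ at $p$. The indefinite algebra $\cB$ has discriminant $pN^-$, so $\cB_p$ is the nonsplit quaternion algebra over $\QQ_p$ and admits no such embedding. What is true, and what Theorem~\ref{thm:is510} uses, is that $\cB$ splits over $\QQ_{p^2}$ (hence over $\QQ_p^\ur$), so that $\cB_{\QQ_p^\ur}^\times\cong\GL_2(\QQ_p^\ur)$ and the $\rho_2$-action makes sense there with $\nrd$ corresponding to $\det$; Zariski density of $\cB^\times$ in this algebraic group then yields the reduction to the $\det$-isotypic component. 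With this correction your proof goes through and matches the paper's intended route.
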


We believe that one has a similar result for odd $n$, but we do not formulate a precise statement for it.

There is a map from the space of modular forms on $X_\Gamma$ of weight $n+2$ to $\Fil^{n+1}\Hdr^1(X_\Gamma,\cV_n)$, given by $f(z)\mapsto \omega_f\dfn f(z)\ev_z\tns dz$, where $\ev_z$ is the functional that assigns to a polynomial $R(X)$ its evaluation at the point $z$. Identifying these spaces one obtains the filtration of $\Hdr^1(X_\Gamma,\cV_n)$:
\begin{prop}[{\cite[Proposition 6.1]{iovita2003dpa}}]
\label{prop:is61}
The filtration of $\Hdr^1(X_\Gamma,\cV_n)$ is given by:
\[
\Fil^j\Hdr^1(X_\Gamma,\cV_n)=
\begin{cases}
\Hdr^1(X_\Gamma,\cV_n) &\If j\leq 0,\\
M_k(\Gamma) &\If  1\leq j \leq n+1,\\
0 &\Else .
\end{cases}
\]
\end{prop}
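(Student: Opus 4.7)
The strategy is to identify $M_{n+2}(\Gamma)$ with the top nonzero step $\Fil^{n+1}$ of the Hodge filtration, and to show that all intermediate pieces $\Fil^j$ with $1\leq j\leq n+1$ coincide with $\Fil^{n+1}$. This flatness reflects that $\Hdr^1(X_\Gamma,\cV_n)$ is the de Rham realization of a pure motive of Hodge types $\{(n+1,0),(0,n+1)\}$ only, by analogy with the motive of a weight $n+2$ modular form.

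First, using the hypercohomological description $\Hdr^1(X_\Gamma,\cV_n)=\bbH^1(\Gamma,\Omega^\bullet\tns V_n\{m\})$ from Subsection~\ref{subsection:hdrXEV}, classes are represented by pairs $(\omega,\{f_\gamma\})$ with $\omega\in\Omega^1(\cH_p)\tns V_n\{m\}$ and $\gamma\omega-\omega=df_\gamma$. Under the induced filtration, $\Fil^j$ is represented by those pairs with $\omega\in\Omega^1\tns\Fil^{j-1}V_n\{m\}$ and $f_\gamma\in\cO(\cH_p)\tns\Fil^jV_n\{m\}$. Using the explicit description of $\cE(M_2)$ from Subsection~\ref{sec:specialcase} and the identification $\Phi\colon\cU\cong\cP_2$ of Equation~\eqref{eq:defpoly}, one verifies that the evaluation functional $\ev_z\in V_n$ (the map $P\mapsto P(z)$ on $\cP_n$) generates the one-dimensional top Hodge piece $F^n V_n|_z$. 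Concretely, at the $V_2$-level the corresponding element of $\cU$ is $\smtx{z}{-z^2}{1}{-z}\in F^1 V_2|_z$; passing to $V_n=\Sym^n V_1$ puts its $n$-fold symmetric product into $F^n V_n|_z$. Hence $\omega_f\dfn f(z)\ev_z\tns dz$ is a global section of $\Omega^1_{\cH_p}\tns F^n V_n\{m\}$, and the modularity relation $f(\gamma z)=(cz+d)^{n+2}f(z)$ translates into $\Gamma$-invariance of $\omega_f$, producing an injective map $M_{n+2}(\Gamma)\injects\Fil^{n+1}\Hdr^1(X_\Gamma,\cV_n)$ sending $f$ to the class $[(\omega_f,0)]$.

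Second, for surjectivity onto $\Fil^{n+1}$, any class admits a representative $(\omega,\{f_\gamma\})$ with $\omega\in\Omega^1\tns F^n V_n\{m\}$; the one-dimensionality of $F^n V_n\{m\}|_z$ forces $\omega=h(z)\ev_z\tns dz$ for some rigid analytic $h\colon\cH_p\to\CC_p$, and the cocycle condition $df_\gamma=\gamma\omega-\omega$, combined with the fact that $\omega-\gamma\omega$ is forced to lie in a rank-one subbundle, allows one to kill $\{f_\gamma\}$ by a coboundary, after which $h$ must satisfy the modular transformation law of weight $n+2$. To obtain the flatness $\Fil^1=\cdots=\Fil^{n+1}$, I would analyze the Hodge-to-de-Rham spectral sequence $E_1^{p,q}=H^q(X_\Gamma,\gr^p_F(\Omega^\bullet\tns E(V_n\{m\})))\Rightarrow\Hdr^{p+q}(X_\Gamma,\cV_n)$: for each $1\leq p\leq n$ the graded piece $\gr^p_F \cV_n$ is an automorphic line bundle on $X_\Gamma$ of intermediate weight whose relevant contributions to $\Hdr^1$ vanish, forcing every $\Fil^j$ in this range to equal the top piece.

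The main obstacle is the flatness step: rigorously establishing the vanishing of the intermediate Hodge-graded cohomology groups requires identifying each $\gr^p_F E(V_n\{m\})$ explicitly as an automorphic line bundle on $X_\Gamma$ (of the form $\omega^{\tns a}$ for an appropriate $a=a(p,m)$) and then invoking the appropriate positivity / vanishing statement so that only the extreme Hodge pieces $(n+1,0)$ and $(0,n+1)$ contribute. Once this purity is in place, the explicit identification $\Fil^j=M_{n+2}(\Gamma)$ for $1\leq j\leq n+1$ follows formally from the shape of the filtration combined with the first two steps.
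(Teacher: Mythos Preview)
The paper does not give a proof of this proposition: it is quoted verbatim as \cite[Proposition~6.1]{iovita2003dpa} and no argument is supplied here. The only thing the present paper contributes is the sentence immediately preceding the statement, recording the map $f\mapsto\omega_f=f(z)\ev_z\tns dz$ into $\Fil^{n+1}$, which is exactly the map you construct in your first step. So there is no ``paper's own proof'' to compare your proposal against.

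That said, your outline is along the standard lines one finds in the cited reference and in the classical modular-curve analogue (Scholl, Deligne). The identification of $M_{n+2}(\Gamma)$ with $\Fil^{n+1}$ via $f\mapsto\omega_f$ is correct, and you correctly isolate the nontrivial content as the flatness $\Fil^1=\cdots=\Fil^{n+1}$. Your proposed mechanism for this---vanishing of the intermediate Hodge-graded pieces in the Hodge--de Rham spectral sequence---is the right idea, but the way it is actually carried out in \cite{iovita2003dpa} is somewhat more concrete than an abstract positivity/vanishing argument: one writes down explicitly the filtration on $V_n\{m\}$ over $\cH_p$, observes that the graded pieces are $\cO_{\cH_p}$-line bundles generated by $(z-\cdot)$-powers, and checks directly (using that $\cH_p$ is Stein and that the relevant $\Gamma$-cohomology vanishes) that only the extreme pieces survive on passage to $\Hdr^1$ of the quotient. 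Your sketch would benefit from making that computation explicit rather than appealing to an unspecified ``positivity'' statement, since on a Mumford curve the usual Kodaira-type vanishing is not literally available and one instead uses the group-cohomological description.
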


Define the filtered convergent $F$-isocrystal $\cV_{n,n}$ as:
\[
\cV_{n,n}\dfn \cV_n\tns\Sym^n\Hdr^1(E).
\]
Understanding the structure of $\Dst_{\QQ_p^\ur}(H_p(\cD_n))$  will allow us to compute the Abel-Jacobi map in an explicit way. Write $\Hdr^{2n+1}(\cD_n)$ for the filtered $(\phi,N)$-module $\Dst_{,\QQ_p^\ur}(H_p(\cD_n))$. The following key result is a consequence of the facts shown so far.

\begin{thm}
The $G_{\QQ_p}$-representation $H^{2n+1}_p(\cD_n)$ is semistable, and there is a (canonical up to scaling) isomorphism of filtered $(\phi,N)$-modules
\[
\Dst(H_p^{2n+1}(\cD_n))=\Hdr^{2n+1}(\cD_n) \cong \Hdr^1(X_\Gamma,\cV_{n,n})=\Hdr^1(X_\Gamma,\cV_n)\tns[\phantom{\cdot}] \Sym^{n} \Hdr^1(E).
\]
Moreover, writing $\Fil^j$ for $\Fil^j\Hdr^{2n+1}(\cD_{n})$ we have:
\[ \Fil^j=
\begin{cases}
\Hdr^1\left(X_\Gamma,\cV_{n,n}\right) &\If j\leq 0,\\
\Hdr^1\left(X_\Gamma,\cV_n\right)\tns H_j +  M_k(\Gamma)\tns \Sym^{n}\Hdr^1(E) &\If 1\leq j \leq n+1\\
M_k(\Gamma)\tns H_{j-n-1} &\If n+2\leq j \leq 2n+1,\\
0&\Else .
\end{cases}
\]

In particular,
\[
\Fil^{n+1}\Hdr^{2n+1}(\cD_n)\cong M_k(\Gamma)\tns\Sym^{n}\Hdr^1(E).
\]
\end{thm}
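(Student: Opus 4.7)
The plan is to prove the theorem in three stages: deduce semistability together with the isomorphism of filtered $(\phi,N)$-modules, handle the constant factor coming from $E$, and finally read off the Hodge filtration by a formal tensor-product computation.

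For the first stage, since the $p$-adic \'etale realization has already been identified as
$$H_p^{2n+1}(\cD_n) \cong \Het^1\bigl(\overline{X_M},\bbL_n\bigr)^{G}\tns \Sym^{n}\Het^1(E,\QQ_p),$$
it suffices to treat each tensor factor separately. For the first factor I would apply Theorem~\ref{thm:is36}(1) to the universal abelian surface $\pi\colon\cA\to X_M$ with $q=2$, using the semistable model of $X_M$ provided by \v{C}erednik--Drinfel'd (Theorem~\ref{thm:cer-drin}). This gives semistability of $\Het^1(\overline{X_M},R^2\pi_*\QQ_p)$ and a canonical identification $\Dst\bigl(\Het^1(\overline{X_M},R^2\pi_*\QQ_p)\bigr)\cong \Hdr^1\bigl(X_M,\cHdr^2(\cA/X_M)\bigr)$. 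Because $\bbL_n$ is cut out from $R^2\pi_*\QQ_p$ by operations (kernel of $b-\nrd(b)$ for $b\in\cB^\times$, the symmetric power $\Sym^m$, and the kernel of the Laplace operator) that are defined by idempotents and natural maps, functoriality of $\Dst$ shows that the same operations on the de Rham side cut out the filtered isocrystal attached to $\cV_n$; here one uses Theorem~\ref{thm:is510} to pass from $\cHdr^1(\cA/X_M)$ to $\cE(M_2)$ and Corollary~\ref{cor:is510-0} to extract the $\cB^\times$-equivariant piece, after which the compatibility of $\cE(\cdot)$ with tensor and symmetric operations gives $\cE(V_n)$. Since $X_M\to X$ is Galois with group $G$ and $X_{\QQ_p^{\ur}}^{\an}\cong X_\Gamma$ (Theorem~\ref{thm:cer-drin}), taking $G$-invariants on both sides yields a (canonical up to scaling, as in Theorem~\ref{thm:is510}) isomorphism of filtered $(\phi,N)$-modules $\Dst\bigl(\Het^1(\overline{X_M},\bbL_n)^G\bigr)\cong \Hdr^1(X_\Gamma,\cV_n)$.

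For the second factor, $E$ is a CM elliptic curve with good reduction over an unramified extension, so $\Sym^n\Het^1(E,\QQ_p)$ is crystalline, hence semistable, with $\Dst\bigl(\Sym^n\Het^1(E,\QQ_p)\bigr)\cong \Sym^n\Hdr^1(E)$ as filtered $(\phi,N)$-modules. Tensoring the two factors and using that $\Dst$ commutes with tensor products on $\Rep_{\st}(G_K)$ yields the semistability of $H_p^{2n+1}(\cD_n)$ and the claimed isomorphism $\Hdr^{2n+1}(\cD_n)\cong \Hdr^1(X_\Gamma,\cV_n)\tns \Sym^n\Hdr^1(E)$. The filtration is then computed by the standard recipe $\Fil^j(A\tns B)=\sum_i \Fil^i A\tns \Fil^{j-i}B$. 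Substituting Proposition~\ref{prop:is61} for $A=\Hdr^1(X_\Gamma,\cV_n)$ and Lemma~\ref{lemma:filderham} for $B=\Sym^n\Hdr^1(E)$, monotonicity of both filtrations collapses the sum to at most two summands: for $j\leq 0$ the $i=0$ term gives the whole space; for $1\leq j\leq n+1$ the surviving summands are $\Hdr^1(X_\Gamma,\cV_n)\tns H_j$ (from $i=0$) and $M_k(\Gamma)\tns\Sym^n\Hdr^1(E)$ (from $i=j$); for $n+2\leq j\leq 2n+1$ the vanishing $H_j=0$ kills the $i=0$ term and only $i=n+1$ contributes, giving $M_k(\Gamma)\tns H_{j-n-1}$; and above $j=2n+1$ everything vanishes. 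In particular at $j=n+1$ one has $H_{n+1}=0$ by Lemma~\ref{lemma:filderham}, so the first summand disappears and $\Fil^{n+1}=M_k(\Gamma)\tns\Sym^n\Hdr^1(E)$, proving the ``in particular'' assertion.

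The main obstacle will be the first stage: bookkeeping the actions simultaneously. One needs to verify that the Galois action of $G=(\Rmax/M\Rmax)^\times/\{\pm 1\}$ on the \'etale side (coming from automorphisms of the level-$M$ structure) corresponds under Theorem~\ref{thm:is510} to the analytic $\Gamma$-structure on the de Rham side, and that the $\cB^\times$-action defining $\bbL_n$ is intertwined with the $\rho_2$-action of $\GL_2$ used to build $\cV_n$ (as asserted in the second sentence of Theorem~\ref{thm:is510}). This is essentially an unwinding of the comparison isomorphism, but the ``up to scaling'' ambiguity in Theorem~\ref{thm:is510} propagates to the final identification, which is why the isomorphism is asserted only up to a scalar.
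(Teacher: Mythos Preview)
Your proposal is correct and follows essentially the same approach as the paper: semistability via the \v{C}erednik--Drinfel'd uniformization, the comparison isomorphism from Theorem~\ref{thm:is36} combined with Theorem~\ref{thm:is510}/Corollary~\ref{cor:is510-0} and functoriality to identify the coefficient isocrystal with $\cV_n$, descent by $G$-invariants, and the filtration from Proposition~\ref{prop:is61} together with Lemma~\ref{lemma:filderham}. Your treatment is in fact more explicit than the paper's in two respects---you separate out the crystallinity of $\Sym^n\Het^1(E,\QQ_p)$ (which the paper absorbs silently into ``functoriality'') and you spell out the tensor-filtration computation case by case---but the logical skeleton is identical.
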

\begin{proof}
To prove semistability, we can extend the base to $\QQ_p^\text{ur}$. In this case the curve $X$ is isomorphic to a disjoint union of Mumford curves, and hence it is semistable.

By Corollary~\ref{cor:is510-0}, there is an isomorphism:
\[
\bigcap_{x\in \cB^\times} \ker\left(x-\nrd(x)\colon \cE\left(\wedge^2 M_2\right)\to\cE\left(\wedge^2 M_2\right)\right)\cong \cV_2.
\]

Applying Theorem~\ref{thm:is36} and functoriality, we see that the filtered $(\phi,N)$-module 
\[
\Dst_{,\QQ_p^\text{ur}}\left(\Het^1(\overline{X_M},\bbL_n)\tns \Sym^{n}\Het^1(E,\QQ_p)\right)
\]
is isomorphic to
\[
 \Hdr^1((X_M)_{\QQ_p^\text{ur}},\cV_n)\tns \Sym^{n}\Hdr^1(E/\QQ_p).
\]
This isomorphism can then be descended to $\cD_n$ by taking $G$-invariants.

Putting together Proposition~\ref{prop:is61} with Equation~\eqref{lemma:filderham} we obtain the formula for the filtration.
\end{proof}
\section[Geometric interpretation]{Geometric interpretation of the values of \texorpdfstring{$L'_p(f,K,s)$}{Lp'(f,K,s)}}
\label{section:Final}
This section contains the main result of this project. In Subsection~\ref{subsection:specialvalues} we obtain a formula for the values of the derivative of the $p$-adic $L$-function, in terms of Coleman integrals on the $p$-adic upper-half plane. In Subsection~\ref{sec:cycles} we define a collection of cycles on the motive $\cD_n$ introduced in the previous section, whose image under the $p$-adic Abel-Jacobi map will be calculated. Finally, in Subsection~\ref{sec:maintheorem} we calculate this image and explain the main result.

\subsection{The \texorpdfstring{$p$}{p}-adic Abel-Jacobi map}
\label{sec:AbelJacobi}
Let $X$ be a smooth projective variety over a field $K$. Suppose given a closed immersion $i\colon Z\injects X$  and an open immersion $j\colon U\injects X$, such that $X$ is the disjoint union of $i(Z)$ and $j(U)$. Let $\cF$ be a sheaf on the \'etale site of $X$. Then $i_*i^!\cF$ is the largest subsheaf of $\cF$ which is zero outside $Z$.

The group
\[
\Gamma(X,i_*i^!\cF)=\Gamma(Z,i^!\cF)=\ker\left(\cF(X)\to\cF(U)\right)
\]
is called the group of \emph{sections of $\cF$ with support on $Z$}. The functor which maps a sheaf $\cF$ to $\Gamma(Z,i^!\cF)$
is left-exact, so it makes sense to consider its right-derived functors.

\begin{df}
The functors
\[
H^k_{|Z|}(X,\cF)\dfn \cF\mapsto R^k\Gamma(Z,i^!\cF)
\]
are called the \emphh{\'etale cohomology groups of $\cF$ with support on $Z$}.
\end{df}

Let $\cF$ be a sheaf on the \'etale site of $X$. The short exact sequence of sheaves on the \'etale site of $X$
\[
0\to j_!j^*\cF\to\cF\to i_*i^*\cF\to 0
\]
yields a long exact sequence of abelian groups.
\begin{align*}
0&\to (i^!\cF)(Z)\to F(X)\to F(U)\to\cdots\\
\cdots&\to \Het^k(X,\cF)\to \Het^k(U,\cF)\to H^{k+1}_{|Z|}(X,\cF)\to\cdots
\end{align*}

Let $\ol K$ be the separable closure of $K$, and let $\ol X=X\tns[K]\ol{K}$ be the base change of $X$ to $\ol K$. Assume also that $\ol Z$ is smooth over $\ol K$. Let $c$ be the codimension of $Z$ in $X$. That is, each of the connected components of $\ol Z$ is of codimension $c$ inside the corresponding component of $\ol X$.

Let $\cF$ be a locally constant torsion sheaf on $\ol X$, such that its torsion is coprime to $\car(K)$. In our applications, $\car K=0$, so this condition will be void. As a special case of cohomological purity, (see~\cite{milne1980ec} VI.5.1), we have canonical isomorphisms for every $k\in\ZZ$:
\[
H^k_{|\ol Z|}(\ol X,\cF)\cong \Het^{k-2c}(\ol Z,i^*\cF(-c)).
\]
As a corollary, for $0\leq k\leq 2c-2$ there are $\Het^k(\ol X,\cF)\cong \Het^k(\ol U,\cF)$. Moreover, if $d=\dim(X)$, there is a long exact sequence:
\begin{align}
\label{eqn:gysinmap}
0&\to \Het^{2c-1}(\ol X,\cF)\to \Het^{2c-1}(\ol U,\cF)\to H^{2c}_{|\ol Z|}\left(\ol X,\cF\right)\tto{i_*} \Het^{2c}(\ol X,\cF)\to\cdots\\
 &\cdots\to H^{2d}_{|\ol Z|}\left(\ol X,\cF\right)\to \Het^{2d}(\ol X,\cF)\to \Het^{2d}\left(\ol U,\cF\right)\to 0\nonumber.
\end{align}

We could also replace the group $H^{2c}_{|\ol Z|}\left(\ol X,\cF\right)$ with $\Het^{0}\left(\ol Z,i^*\cF(-c)\right)$, and the group $H^{2d}_{|\ol Z|}\left(\ol X,\cF\right)$ with  $\Het^{2(d-c)}\left(\ol Z,i^*\cF(-c)\right)$.

Assume now that $K$ is a field of characteristic $0$, and let $l$ be a prime. Let $X$ be a smooth projective variety over $K$, and let $\chow^c(X)$ be the Chow group of $X$, consisting of codimension-$c$ cycles with \emph{rational} coefficients. The Chow group has already been introduced in Section~\ref{section:Motive} when discussing a category of relative motives with arbitrary coefficients, but here we work with a simpler setting. Consider the locally-constant sheaves $\cF_n=\ZZ/l^n\ZZ(c)$ in the previous subsection, and take projective limits with respect to $n$, to get $\ZZ_l$-valued cohomology. Inverting $l$ we get $\QQ_l$-valued cohomology, which will be denoted with $\Het$ as well.

The Gysin map $i_*$ in Equation~\ref{eqn:gysinmap} induces by restriction to rational cycles the \emphh{cycle class map} (see \cite[Section~VI.9]{milne1980ec}):
\[
\cl\colon \chow^c(X)\to \Het^{2c}\left(\ol X,\QQ_l(c)\right)^{G_K},
\]
where $\ol X\dfn X\tns[K]\ol K$. Let $\chow_0^c(X)\dfn \ker\cl$. Given a class $[Z]\in \chow^c_0(X)$, represented by a cycle $Z$, consider the short exact sequence of $G_K$-modules:
\begin{equation}
\label{eqn:gysin}
0\to \Het^{2c-1}\left(\ol X,\QQ_l(c)\right)\to \Het^{2c-1}\left(\ol X\setminus |\ol Z|,\QQ_l(c)\right)\to H_{|\ol{Z}|}^{2c}\left(\ol X,\QQ_l(c)\right)_0\to 0,
\end{equation}
where
\[
H_{|\ol{Z}|}^{2c}\left(\ol X,\QQ_l(c)\right)_0\dfn \ker \left( H_{|\ol{Z}|}^{2c}\left(\ol X,\QQ_l(c)\right)\tto{i_*} \Het^{2c}\left(\ol X,\QQ_l(c)\right)\right)
\]
is the kernel of the \emphh{Gysin map}.

Consider the map $\alpha\colon\QQ_l\mapsto H^{2c}_{|Z|}\left(\ol{X},\QQ_l(c)\right)_0$ which sends
\[
1\mapsto \cl_{\ol{Z}}^{\ol{X}}(\ol{Z}) \in H^{2c}_{|\ol Z|}\left(\ol{X},\QQ_l(c)\right).
\]
Set $\ol U\dfn\ol X\setminus |\ol Z|$. Pulling back the exact sequence~\eqref{eqn:gysin} by $\alpha$ we obtain an extension
\begin{equation}
\label{eqn:extaj}
\xymatrix{
0\ar[r]& \Het^{2c-1}\left(\ol X,\QQ_l(c)\right)\ar[r]\ar@{=}[d]& E\ar[r]\ar[d]& \QQ_l\ar[r]\ar[d]^{\alpha}& 0\\
0\ar[r]& \Het^{2c-1}\left(\ol X,\QQ_l(c)\right)\ar[r]&\Het^{2c-1}\left(\ol U,\QQ_l(c)\right)\ar[r]& H_{|\ol{Z}|}^{2c}\left(\ol X,\QQ_l(c)\right)_0\ar[r]& 0.
}
\end{equation}

\begin{df}
The \emphh{$l$-adic \'etale Abel-Jacobi map} is the map
\[
\AJ^\et_l\colon \chow^c_0(X)\to \Ext^1\left(\QQ_l,\Het^{2c-1}\left(\ol X,\QQ_l(c)\right)\right),
\]
which assigns to a class $[Z]$ the class of the extension~\eqref{eqn:extaj}.
\end{df}

Assume now that the variety $X$ is defined
over a $p$-adic field $K$, and set $l=p$. Bloch and Kato in~\cite{bloch1990fat} and Nekovar in~\cite{MR1263527} have defined, for any Galois representation $V$ of $G_K$, a subspace
\[
\Hst^1(K,V)\dfn \ker\left(\Het^1(K,V)\to \Het^1(K,V\tns[\QQ_p] \Bst)\right).
\]
This is identified with the group of extension classes of $V$ by $\QQ_p$ in the category of semistable representations of $G_K$.

The following result can be found in~\cite{MR1738867}.
\begin{lemma}
The image of $\AJ^\et_p$ is contained in
\[
\Hst^1\left(K,\Het^{2c-1}\left(\ol X,\QQ_p(c)\right)\right)\cong \Ext^1_{\Rep_\text{st}(G_K)} \left(\QQ_p,\Het^{2c-1}\left(\ol X,\QQ_p(c)\right)\right).
\]
\end{lemma}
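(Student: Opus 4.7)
The plan is to show directly that the middle term $E$ of the extension produced by $\AJ^\et_p([Z])$ is itself a semistable $G_K$-representation. By the definition of $\Hst^1$ as the kernel of $\Het^1(K,V)\to\Het^1(K,V\tns\Bst)$, together with the $\Ext^1$-interpretation recalled earlier (via the equivalence $\Rep_\st(G_K)\cong\MF_K^{\ad,(\phi,N)}$), this is equivalent to the containment we want, since
\[
\Hst^1\left(K,\Het^{2c-1}(\ol X,\QQ_p(c))\right)\cong\Ext^1_{\Rep_\st(G_K)}\left(\QQ_p,\Het^{2c-1}(\ol X,\QQ_p(c))\right),
\]
and a class in $\Het^1(K,V)$ lies in $\Hst^1$ if and only if the corresponding extension of $\QQ_p$ by $V$ is a semistable representation.

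First I would verify that all the Galois representations entering the construction are semistable. The assumption in our setting is that $X/K$ is smooth and projective and admits a proper strictly semistable model $\fX/\cO_K$, which also accommodates a nice spreading out of $Z$. Granting this, the theorems of Faltings--Tsuji (the $C_\st$-conjecture, of which Theorem~\ref{thm:is36}(1) is the version for curves with coefficients) imply that $\Het^{2c-1}(\ol X,\QQ_p(c))$ is semistable. The open analogue, extending Theorem~\ref{thm:is36}(2) to higher dimension and higher codimension subschemes whose total space has simple normal crossings, yields semistability of $\Het^{2c-1}(\ol U,\QQ_p(c))$ as well. The representation $\QQ_p$ is semistable trivially.

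Next, the structural observation: by construction, $E$ is a fiber product
\[
E=\Het^{2c-1}(\ol U,\QQ_p(c))\times_{H^{2c}_{|\ol Z|}(\ol X,\QQ_p(c))_0}\QQ_p,
\]
and therefore embeds as a $G_K$-subrepresentation of $\Het^{2c-1}(\ol U,\QQ_p(c))\oplus\QQ_p$, namely as the kernel of the $G_K$-equivariant map $(u,t)\mapsto \pi(u)-t\cdot\cl_{\ol Z}^{\ol X}(\ol Z)$. Since semistable representations form a full abelian tensor subcategory of $\Rep_{\QQ_p}(G_K)$ closed under direct sums, kernels, and subobjects, it follows that $E$ is semistable. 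Combined with the $\Ext^1$-identification above, this gives $[E]\in\Hst^1\bigl(K,\Het^{2c-1}(\ol X,\QQ_p(c))\bigr)$.

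The main obstacle is the semistability of the open-variety cohomology $\Het^{2c-1}(\ol U,\QQ_p(c))$: for the curve case this is the content of Theorem~\ref{thm:is36}(2), but in the generality one needs here (arbitrary smooth proper $X$, codimension-$c$ cycle $Z$) it requires genuine input from $p$-adic Hodge theory, specifically log-crystalline cohomology with coefficients and the comparison isomorphisms of Tsuji and Faltings applied to the pair $(\fX,\fX\setminus\overline Z)$ equipped with its natural log structure. Once this semistability is granted, the rest of the argument is purely formal stability of the category $\Rep_\st(G_K)$ under the relevant categorical operations.
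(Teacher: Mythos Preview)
The paper does not actually prove this lemma: it merely cites it, writing ``The following result can be found in~\cite{MR1738867}'' (Nekov\'a\v{r}'s paper on $p$-adic Abel--Jacobi maps) immediately before the statement. So there is no proof in the paper to compare against; your sketch is in fact more detailed than what the paper offers.

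Your outline is the standard one and is essentially correct. The strategy of realizing $E$ as a $G_K$-stable subobject of $\Het^{2c-1}(\ol U,\QQ_p(c))\oplus\QQ_p$ and then invoking closure of $\Rep_\st(G_K)$ under subobjects is exactly how one argues. You have also correctly isolated the nontrivial input: semistability of the cohomology of the open complement $U$, which in general requires the log-crystalline comparison theorems of Tsuji and Faltings applied to a suitable log-smooth model.

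One caveat worth flagging: you quietly add the hypothesis that $X$ admits a strictly semistable model over $\cO_K$, whereas the lemma as stated in the paper is for an arbitrary smooth projective $X$ over a $p$-adic field. In that generality a semistable model need not exist over $K$ itself, and one must first pass to a finite extension (via de Jong's alterations or the semistable reduction theorem) and then descend using the compatibility of $\Hst^1$ with finite restriction. For the applications in this paper the extra hypothesis is harmless---the relevant varieties are built from Shimura curves that acquire Mumford uniformization, hence semistable models, over an explicit unramified extension---but if you want the lemma in the generality stated you should either note the reduction step or cite Nekov\'a\v{r} as the paper does.
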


As seen in Section~\ref{section:filteredphiNmodules}, the Fontaine functors $\Dst$ and $\Vst$ give a canonical comparison isomorphism:
\[
\Ext^1_{\Rep_\st(G_K)}\left(\QQ_p,\Het^{2c-1}(\ol X,\QQ_p(c))\right)\cong
\Ext^1_{\MF_K^{\ad,(\phi,N)}}\left(K[c],\Dst(\Het^{2c-1}(\ol X,\QQ_p))\right),
\]
which will make the computations possible.

\subsection{The \texorpdfstring{$p$}{p}-adic Abel-Jacobi for \texorpdfstring{$\cD_n$}{Dn}}
\label{sec:p-adic-abel}
Consider the generalized Kuga-Sato motive $\cD_n=(\cA^m\times E^n,\epsilon)$ as in Definition~\ref{df:gen-kugasato-motive}. The construction of the $p$-adic Abel-Jacobi map of Subsection~\ref{sec:AbelJacobi} can be easily extended to the motive $\cD_n$, by applying the projector at the appropriate places. This can be done for each realization, and we are interested in the  de Rham realization of $\cD_n$, which we have computed to be
\[
\Hdr^1(X_\Gamma,\cV_{n,n}).
\]
This fits in a short exact sequence as in Theorem~\ref{thm:is36}:
\begin{equation}
\label{eq:extXGamma}
0\to \Hdr^1(X_\Gamma,\cV_{n,n})\to\Hdr^1(U,\cV_{n,n})\to \bigoplus_{z\in S} (\cV_{n,n})_z[1],
\end{equation}
where $S$ is a finite set of points in $X_{\Gamma}$ lying in distinct residue classes, and $U$ is the complement in $X_\Gamma$ of $S$. In Section~\ref{section:Final} we will define certain cycles on $\cA^m\times E^n$ which are supported on a fiber above a point $P\in X$. These cycles are of codimension $n+1$, and therefore sending $1$ to their cycle class yields a map:
\[
K[n+1]\to (\cV_{n,n})_z[1].
\]
Pulling back the extension~\eqref{eq:extXGamma} we obtain another extension:
\[
0\to \Hdr^1(X_\Gamma,\cV_{n,n})\to E\to K[n+1]\to 0.
\]
Using Lemma~\ref{lemma:easylemma}, and the fact that the space
\[
\Fil^{n+1}\Hdr^1(X_\Gamma,\cV_{n,n})
\]
is self-orthogonal, we obtain:
\[
\Hdr^1(X_\Gamma,\cV_{n,n})/\Fil^{n+1}\Hdr^1(X_\Gamma,\cV_{n,n})\cong \left(M_{n+2}(\Gamma)\tns\Sym^n\Hdr^1(E)\right)^\vee
\]
The composition map will be denoted $\AJ_K$:
\[
\AJ_K\colon \chow^{n+1}(\cD_n)\to \left( M_{n+2}(\Gamma)\tns\Sym^n\Hdr^1(E)\right)^\vee.
\]

\subsection{Cycles on \texorpdfstring{$\cD_n$}{Dn}}
\label{sec:cycles}
We proceed to define a collection of cycle classes in $\chow^{n+1}(\cD_n)$, indexed by certain isogenies.

Let $E$ be an elliptic curve with complex multiplication by $\cO$. Recall that $\cO=\End_\Rmax(E)$ is an order in an imaginary quadratic
number field $K$. Consider an isogeny $\varphi$ from $E$ to another elliptic curve with complex multiplication $E'$, of degree coprime to $N^+M$. If there is a level-$N^+$ structure and full level-$M$ structure on $E$, we obtain the same structures on $E'$, and also on $A'\dfn E'\times E'$, by putting this level structures only on the first copy. Hence we obtain a point $P_{A'}$ in $X_M$, together with an embedding:
\[
i_{A'}\colon A'\to \cA,
\]
defined over $K$. Let $\Upsilon_\varphi$ be the cycle
\[ \Upsilon_\varphi\dfn (\phantom{.}^t\Gamma_\varphi)^n\subseteq  (E'\times E)^n\cong(A')^m\times E^n\injects \cA^m\times E^n,
\]
where the last inclusion is induced from the canonical embedding
$i_{A'}$, and $\Gamma_\varphi$ is the graph of $\varphi$. Finally, apply the projector $\epsilon$ defined in Section~\ref{section:Motive}.

\begin{df}
  The cycle
\[
\Delta_\varphi\dfn \epsilon \Upsilon_\varphi\in\chow^{n+1}(\cD_n)
\]
is called the \emphh{generalized Heegner cycle} attached to the isogeny $\varphi\colon E\to E'$.
\end{df}

\begin{rmk}
  Since  $H_p(\cD_n)$ is concentrated in degree $2n+1$, the cycle $\Delta_\varphi$ is null-homologous. Therefore it makes sense to study the image of $\Delta_\varphi$ under the any Abel-Jacobi map, in particular the $p$-adic version discussed in Subsection~\ref{sec:AbelJacobi}.
\end{rmk}

We proceed to derive a formula for the image of the Abel-Jacobi map of generalized Heegner cycles, and to compute it in a special setting, thus giving a geometric interpretation of values of the derivative of the anticyclotomic $p$-adic $L$-function.

We have defined a collection $\Delta_\varphi$ of generalized
Heegner cycles of dimension $n$, attached to isogenies $\varphi\colon E\to E'$. Let $\tilde P_{A'}$ be the point of $X_M$ attached to $A'$ through the isogeny $\varphi$. The cycle $\Delta_\varphi$ lies in the $(2n+1)$-dimensional scheme $\cA^m\times E^n$, and so it has codimension $n+1$. As described in Subsection~\ref{sec:p-adic-abel}, we obtain a map:
\[
\AJ_K\colon \chow^{n+1}(\cD_n)\to \left(M_{n+2}(\Gamma)\tns\Sym^n\Hdr^1(E)\right)^\vee,
\]
Let $\omega_f$ be the differential form associated to a modular form $f\in M_{n+2}(\Gamma)$ as explained in Subsection~\ref{sec:modul-forms-shim}. Fix $\alpha\in \Sym^n\Hdr^1(E)$. We want to compute the value:
\[
\AJ_K(\Delta_\varphi)(\omega_f\wedge \alpha)\in \CC_p.
\]

Write $\cl_{P_{A'}}(\Delta_\varphi)$ for the cycle class of $\Delta_\varphi$ on the fiber above $P_{A'}\in X_M$:
\[
\cl_{P_{A'}}(\Delta_\varphi)\dfn \cl_{|\overline{\Delta_\varphi}|}^{\cA^m\times E^n}(\overline{\Delta_\varphi})\in H^{2n+2}_{|\overline{\Delta_\varphi}|}\left(\cA^m\times E^n,\QQ_p(n+1)\right).
\]
Consider the short exact sequence of filtered $(\phi,N)$-modules:
\begin{equation}
\label{eq:extXGamma2}
0\to \Hdr^1(X_{\Gamma},\cV_{n,n})\to \Hdr^1(U,\cV_{n,n})\tto{\res_{P_{A'}}} (\cV_{n,n})_{P_{A'}}[1]\to 0.
\end{equation}

\begin{rmk}
  Here is where we need to exclude the case of weight $2$, which would correspond to $n=0$: in that case $\res_{P_{A'}}$ is always zero, since the restriction map induces an isomorphism
\[
\Hdr^1(X_{\Gamma})\to \Hdr^1(U).
\]
However, in our situation the cokernel of the map $\res_{P_{A'}}$ injects in:
\[
\Hdr^2(X_\Gamma,\cV_{n,n})\cong \Hdr^0(X_\Gamma,\cV_{n,n}^\vee),
\]
where the isomorphism is given by Serre duality. But $\cV_{n,n}$ (and therefore $\cV_{n,n}^\vee$) does not have $\Gamma$-invariants, since it is isomorphic to $n$ copies of the standard representation of $\Gamma$.
\end{rmk}

 We argued that the sequence in Equation~\eqref{eq:extXGamma2} is exact. Its pull-back under the map $1\mapsto\cl_{P_{A'}}(\Delta_\varphi)$ yields then a short exact sequence:
\[
0\to \Hdr^1(X_\Gamma,\cV_{n,n})\to E\to K[n+1]\to 0
\]

Lemma~\ref{lemma:easylemma} ensures that if one forgets the filtration, the resulting sequence of $(\phi,N)$-modules is split, say by a map $s_1\colon K[n+1]\to E$. If we write $([\eta_1],x)$ for $s_1(1)$, note then:
\begin{enumerate}
\item For $([\eta_1],x)$ to be a splitting of the given extension, necessarily $x=1$, and $\eta_1$ has to satisfy:
  \begin{enumerate}
  \item $N_U([\eta_1])=0$, and
  \item $\Phi([\eta_1])=p^{n+1}[\eta_1]$.
  \end{enumerate}
\item For $([\eta_1],1)$ to be in
\[
E=\Hdr^1(U,\cV_{n,n})\times_{(\cV_{n,n})_{P_{A'}}[1]} K[n+1],
\]
necessarily $\res_{P_{A'}}(\eta_1)=\cl_{P_{A'}}(\Delta_\varphi)$.
\end{enumerate}

So let $\eta_1$ be a $\cV_{n,n}$-valued $1$-hypercocycle on $U$ satisfying the conditions:
\begin{align*}
 \res_{P_{A'}}(\eta_1)&=\cl_{P_{A'}}(\Delta_\varphi),& N_U([\eta_1])&=0,& \Phi(\eta_1)=p^{n+1}\eta_1+\nabla G,
\end{align*}
where $G$ is a $\cV_{n,n}$-valued rigid section over $U$. Consider next
\[
[\eta_2]\in \Fil^{n+1}\Hdr^1(U,\cV_{n,n})
\]
such that $\res_{P_{A'}}(\eta_2)=\cl_{P_{A'}}(\Delta_\varphi)$. This element exists as well, because it is the image of $1$ under the splitting $s_2$ of Lemma~\ref{lemma:easylemma}. Let
\[
[\widetilde{\eta}_\varphi]\dfn [\eta_1-\eta_2]\in\Hdr^1(U,\cV_{n,n}).
\]
Then $[\widetilde\eta_\varphi]$ can be extended to all of $X_\Gamma$. That is, there is $[\eta_\varphi]\in \Hdr^1(X_\Gamma,\cV_{n,n})$ such that
\[
j_*([\eta_\varphi])=[\widetilde{\eta}_\varphi]\equiv [\eta_1]\pmod{\Fil^{n+1}\Hdr^1\left(U,\cV_{n,n}\right)}.
\]
Write $[\eta_\varphi]=\iota(c)+t$, with $t\in\Fil^{n+1}\Hdr^1(X_\Gamma,\cV_{n,n})$. Then one can replace $[\eta_2]$ by $[\eta_2]+t$ without changing the properties required for $[\eta_2]$, and hence we can assume that $[\eta_\varphi]=\iota(c)$ for some $c\in \Hdr^1(X_\Gamma,\cV_{n,n})$. Recall the maps $I$ and $P_U$ as appearing in diagram of Equation~\eqref{eq:splittings}. We can prove:

\begin{prop}
With the previous notation, the following equality holds:
\begin{equation}
\label{eq:aj-eq1}
\AJ_K(\Delta_\varphi)([\omega_f]\wedge \alpha)=\left\langle I([\omega_f]\wedge\alpha),P_U([\eta_2])\right\rangle_\Gamma.
\end{equation}
\end{prop}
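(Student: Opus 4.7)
The plan is to unwind the definition of $\AJ_K$, recognize it as the cup-product pairing on $X_\Gamma$, and then apply the explicit formula of Iovita--Spie{\ss} from Equation~\eqref{eqn:cupproduct}. By the construction of the Abel-Jacobi map via Lemma~\ref{lemma:easylemma}, the class $\AJ_K(\Delta_\varphi)\in \Hdr^1(X_\Gamma,\cV_{n,n})/\Fil^{n+1}$ is represented by $s_1(1)-s_2(1) = [\eta_1-\eta_2]$, which by construction lifts to the class $[\eta_\varphi]\in \Hdr^1(X_\Gamma,\cV_{n,n})$. Under the Poincar\'e self-pairing, for which $\Fil^{n+1}$ is self-orthogonal, the dual pairing against $\Fil^{n+1}\cong M_{n+2}(\Gamma)\tns\Sym^n\Hdr^1(E)$ is realized by the cup product; therefore
\[
\AJ_K(\Delta_\varphi)([\omega_f]\wedge\alpha) = \langle [\eta_\varphi], [\omega_f]\wedge\alpha\rangle_{X_\Gamma}.
\]

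Next, I would apply Equation~\eqref{eqn:cupproduct} to the right-hand side. Since by assumption $[\eta_\varphi]=\iota(c)$ for some $c\in H^1(\Gamma,V)$ with $V=V_n\tns\Sym^n\Hdr^1(E)$, the compatibility $I\circ\iota = 0$ forces $I([\eta_\varphi])=0$, while $P$ being a retraction of $\iota$ gives $P([\eta_\varphi])=c$. Hence the Iovita--Spie{\ss} formula collapses to
\[
\AJ_K(\Delta_\varphi)([\omega_f]\wedge\alpha) = \langle c, I([\omega_f]\wedge\alpha)\rangle_\Gamma = \langle I([\omega_f]\wedge\alpha), c\rangle_\Gamma,
\]
using symmetry of the pairing on the target. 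It remains to identify $c$ with $P_U([\eta_2])$ (up to sign conventions that are absorbed into the choice of $s_1,s_2$). Applying $j^*$ to $[\eta_\varphi]=\iota(c)$ and using the commutativity $j^*\circ\iota = \iota_U$ from diagram~\eqref{eq:splittings} yields $\iota_U(c) = [\eta_1]-[\eta_2]$ in $\Hdr^1(U,\cV_{n,n})$, and then applying the retraction $P_U$ gives
\[
c \;=\; P_U([\eta_1]) - P_U([\eta_2]).
\]

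The crux of the argument, and the main obstacle, is therefore to show that $P_U([\eta_1])=0$. This is precisely where the Frobenius condition $\Phi(\eta_1)=p^{n+1}\eta_1+\nabla G$ enters: since $P_U$ is constructed to be compatible with Frobenius (see the paragraph after \eqref{eqn:gysin-seq}), the class $P_U([\eta_1])\in H^1(\Gamma,V)$ must be a $\phi_1$-eigenvector with eigenvalue $p^{n+1}$. One then invokes the regularity of the filtered $F$-isocrystal $\cV_{n,n}$ (equivalently, an Eichler--Shimura-type analysis of the Frobenius acting on $H^1(\Gamma,V_n\tns\Sym^n\Hdr^1(E))$, cf.~Theorem~\ref{thm:is510} and the computation of the stalks in Subsection~\ref{sec:specialcase}) to conclude that $p^{n+1}$ does not occur as an eigenvalue of $\phi_1$ on this space. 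This forces $P_U([\eta_1])=0$, giving $c = -P_U([\eta_2])$ and completing the identification in~\eqref{eq:aj-eq1}, up to a sign which is absorbed into the conventions adopted in the statement of Lemma~\ref{lemma:easylemma}. The routine parts (unwinding the definitions, checking the splittings exist, and verifying the compatibilities in diagram~\eqref{eq:splittings}) are already encoded in the cited Iovita--Spie{\ss} machinery; the real content is the Frobenius eigenvalue vanishing in this last step.
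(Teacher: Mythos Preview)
Your proposal is correct and follows essentially the same route as the paper: identify $\AJ_K(\Delta_\varphi)$ with the cup product $\langle[\omega_f]\wedge\alpha,[\eta_\varphi]\rangle_{X_\Gamma}$, use $[\eta_\varphi]=\iota(c)$ so that $I([\eta_\varphi])=0$, collapse the Iovita--Spie\ss{} formula~\eqref{eqn:cupproduct} to a single term, and then reduce to $P_U([\eta_2])$ via $P_U([\eta_1])=0$. The paper simply asserts $P_U([\eta_1])=0$ without comment, whereas you supply the underlying reason (Frobenius compatibility of $P_U$ forces $P_U([\eta_1])$ to lie in the slope-$(n+1)$ part of $H^1(\Gamma,V)$, which is zero); this is a genuine improvement in exposition. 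One small point: you should not defer the sign to ``conventions''---tracking it through, the minus sign from~\eqref{eqn:cupproduct} cancels exactly against $P_U([\eta_\varphi])=P_U([\eta_1]-[\eta_2])=-P_U([\eta_2])$, so the stated formula holds on the nose.
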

\begin{proof}
  Using the definition of the Abel-Jacobi map and following the recipe given in Lemma~\ref{lemma:easylemma}, together with the pairings on $\Hdr^1(X,\cV_{n,n})$, we obtain the following equality:
\[
\AJ_K(\Delta_\varphi)([\omega_f]\wedge \alpha)=\left\langle [\omega_f]\wedge\alpha,[\eta_\varphi]\right\rangle_{X_\Gamma}.
\]
The assumption of $\eta_\varphi=\iota(c)$ implies that $I(\eta_\varphi)$ is zero. So the right-hand side can be rewritten, using Equation~\eqref{eqn:cupproduct} and the diagram of Equation~\ref{eq:splittings}, as:
\[
-\left\langle I_{U,c}([\omega_f]\wedge\alpha),P_U([\eta_\varphi])\right\rangle_\Gamma.
\]
Now the result follows from observing that on $U$ one can write $[\eta_\varphi]=[\eta_1]-[\eta_2]$, and that $P_U([\eta_1])=0$.
\end{proof}
The following result computes a formula the right-hand side of Equation~\eqref{eq:aj-eq1}:

\begin{thm}
Let $F_f$ be a Coleman primitive of $\omega_f$, and let $z'_0\in\cH_p$ be a point in the $p$-adic upper-half plane such that $P'_A=\Gamma_M z'_0$. Then:
\[
\left\langle I([\omega_f]\wedge \alpha),P_U([\eta_2])\right\rangle_\Gamma=\left\langle F_f(z'_0)\wedge \alpha,\cl_{z'_0}(\Delta_\varphi)\right\rangle_{V_{n,n}}.
\]
\end{thm}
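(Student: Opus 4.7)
The strategy is to unwind both sides of the identity in terms of explicit cocycle representatives and then to reduce to a $p$-adic residue calculation on a fundamental domain for $\Gamma$.

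First I would choose convenient hypercocycle representatives. The class $[\omega_f]\wedge\alpha\in\Hdr^1(X_\Gamma,\cV_{n,n})$ is represented by the hypercocycle $(\omega_f\otimes\alpha,0)$, so Schneider integration, as in Subsection~\ref{sec:distribution-modforms}, gives the harmonic cocycle
\[
I([\omega_f]\wedge\alpha)(e)\;=\;c_f(e)\otimes\alpha,\qquad c_f(e)\dfn\res_e(f(z)\ev_z\,dz).
\]
For $[\eta_2]\in\Fil^{n+1}\Hdr^1(U,\cV_{n,n})$ with $\res_{P_{A'}}(\eta_2)=\cl_{P_{A'}}(\Delta_\varphi)$, I would pick a representative $(\omega_{\eta_2},0)$ in which $\omega_{\eta_2}$ is a $\Gamma$-invariant $\cV_{n,n}$-valued meromorphic form on $\cH_p$, regular off $\Gamma\cdot z'_0$ and with residue $\cl_{z'_0}(\Delta_\varphi)$ at $z'_0$. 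Letting $F_{\eta_2}$ be a Coleman primitive of $\omega_{\eta_2}$ on $\cH_p\setminus\Gamma\cdot z'_0$ (using Theorem~\ref{thm:coleman-integral} locally), the definition of $P_U$ from Subsection~\ref{subsection:cohomologyXGamma} shows that $P_U([\eta_2])\in H^1(\Gamma,V_{n,n})$ is represented by the $1$-cocycle $\gamma\mapsto\gamma F_{\eta_2}-F_{\eta_2}$, which is a locally constant function because $\omega_{\eta_2}$ is $\Gamma$-invariant.

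Next I would apply the explicit formula for $\langle\cdot,\cdot\rangle_\Gamma$ of Subsection~\ref{subsection:pairing}. Choose a torsion-free normal subgroup $\Gamma'\subset\Gamma$ of finite index with a good fundamental domain $\fF$ (in the sense of~\cite{deshalit1989eca}), paired free edges $b_1,\dots,b_g,c_1,\dots,c_g$, and identifications $\gamma_ib_i=c_i$. Using the pairing $\langle\cdot,\cdot\rangle_{V_{n,n}}$ built from the pairing on $V_n$ tensored with the one on $\Sym^n\Hdr^1(E)$, the formula yields
\[
\langle I([\omega_f]\wedge\alpha),P_U([\eta_2])\rangle_\Gamma \;=\; \frac{1}{[\Gamma:\Gamma']}\sum_{i=1}^g \bigl\langle \gamma_iF_{\eta_2}-F_{\eta_2},\;c_f(c_i)\otimes\alpha\bigr\rangle_{V_{n,n}}.
\]

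The final step is to identify this sum with the residue at $z'_0$ of the product $F_f\cdot\omega_{\eta_2}$ wedged with $\alpha$, via the $p$-adic residue theorem on $\fF$. Combining the defining property of the harmonic cocycle $c_f$ on each annulus $A_{c_i}$ with integration by parts in the Coleman calculus and the gluings $\gamma_ib_i=c_i$, the right-hand side can be rewritten as the total boundary annular residue of $\bigl\langle F_f\wedge\alpha,\omega_{\eta_2}\bigr\rangle_{V_{n,n}}$ along $\partial\fF$. The $p$-adic residue theorem then equates this boundary sum with the sum of the residues at the interior poles of $\omega_{\eta_2}$ inside $\fF$; but $\omega_{\eta_2}$ has the single interior pole at $z'_0$, where $F_f$ is locally analytic, so this sum collapses to
\[
\bigl\langle F_f(z'_0)\wedge\alpha,\;\cl_{z'_0}(\Delta_\varphi)\bigr\rangle_{V_{n,n}},
\]
which is the right-hand side of the theorem.

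The principal obstacle is the rigorous verification of the $p$-adic residue/reciprocity identity with $\cV_{n,n}$-coefficients, particularly the step that converts the boundary sum over paired edges of $\fF$ into the sum of interior annular residues of a form whose primitive $F_f$ is only locally analytic. This is precisely the $\cV_{n,n}$-valued extension of de~Shalit's reciprocity formula of~\cite{deshalit1989eca}, carried out in the style of the appendix of~\cite{iovita2003dpa}; once this reciprocity is in hand, the remainder of the argument is the essentially formal residue computation above.
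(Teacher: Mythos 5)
Your route is genuinely different from the paper's, but it contains a gap that you have correctly flagged, and that gap is not incidental — it is the crux of the matter. The paper's proof first observes that $\Fil^{n+1}\Hdrc^1(U,\cV_{n,n})$ and $\Fil^{n+1}\Hdr^1(U,\cV_{n,n})$ are orthogonal under $\langle\cdot,\cdot\rangle_U$, so that $\langle[\omega_f]\wedge\alpha,[\eta_2]\rangle_U=0$. Expanding that zero through the cup-product decomposition of Equation~\eqref{eqn:cupproduct}, transported to the open curve $U$ via the diagram~\eqref{eq:splittings}, yields immediately
\[
\langle I([\omega_f]\wedge\alpha),P_U([\eta_2])\rangle_\Gamma=\langle P_{U,c}([\omega_f]\wedge\alpha),I_U([\eta_2])\rangle_{\Gamma,U}.
\]
This swap is the entire content of the step: the right-hand side sees only a Coleman primitive of the \emph{everywhere-regular} form $\omega_f\wedge\alpha$ together with the residue data of $\eta_2$, which is concentrated at $P_{A'}$; Proposition~\ref{prop:formula-for-pairing} then collapses it to a single term with no residue-theoretic manipulation at all.

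You instead attack $\langle I,P_U\rangle_\Gamma$ directly on a good fundamental domain and try to convert the boundary sum into an interior residue. The step you call the ``principal obstacle'' — passing from $\frac{1}{[\Gamma:\Gamma']}\sum_i\langle\gamma_iF_{\eta_2}-F_{\eta_2},\,c_f(c_i)\otimes\alpha\rangle$ to the interior residue of $\langle F_f\wedge\alpha,\omega_{\eta_2}\rangle$ — requires a $\cV_{n,n}$-valued de~Shalit reciprocity that simultaneously (a) moves the Coleman primitive from the form with a pole ($\eta_2$) onto the regular form ($\omega_f$), and (b) trades the log singularity of $F_{\eta_2}$ at $z'_0$ for the local analyticity of $F_f$ there. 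That reciprocity is exactly what the orthogonality-plus-cup-product argument packages abstractly; the paper chose that packaging precisely to avoid reproving the residue identity by hand. So your proposal does not bypass the hard step — it relocates it into a lemma you have not supplied. Once that lemma is established (by extending de~Shalit's fundamental-domain argument as in the appendix of~\cite{iovita2003dpa}), the rest of your computation is correct and would give a more concrete, if longer, derivation of the same identity; but as written the proposal has a genuine gap at the key point.
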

\begin{proof}
Observe first that the spaces
\[
\Fil^{n+1}\Hdrc^1(U,\cV_{n,n})\text{ and } \Fil^{n+1}\Hdr^1(U,\cV_{n,n})
\]
are orthogonal to each other. Therefore:
\[
0=\langle [\omega_f]\wedge\alpha,[\eta_2]\rangle_U=\left\langle P_{U,c}([\omega_f]\wedge \alpha),I_U([\eta_2])\right\rangle_{\Gamma,U}- \left\langle I([\omega_f]\wedge \alpha),P_U([\eta_2])\right\rangle_\Gamma,
\]
and hence we obtain:
\[
\left\langle I([\omega_f]\wedge \alpha),P_U([\eta_2])\right\rangle_\Gamma=\left\langle P_{U,c}([\omega_f]\wedge \alpha),I_U([\eta_2])\right\rangle_{\Gamma,U}.
\]
In order to show that the right-hand side of the previous equation coincides with
\[
\left\langle F_f(z'_0)\wedge \alpha,\cl_{z'_0}(\Delta_\varphi)\right\rangle_{V_{n,n}},
\]
we use the explicit formula for the pairing as found in Proposition~\ref{prop:formula-for-pairing}. Since $\eta_2$ has only nonzero residue at $P_{A'}$, the right hand side of the formula appearing in Proposition~\ref{prop:formula-for-pairing} reduces to pairing the primitive of $\omega_f\wedge\alpha$ with that residue, at the point corresponding to $P_{A'}$, yielding the desired formula.
\end{proof}

\subsection{Computation and application}
\label{sec:maintheorem}

From here on, let $z_0$ be a point in the $p$-adic upper-half plane $\cH_p$ such that the orbit $P_A\dfn\Gamma z_0$ corresponds to $A=E\times E$ in $X$. Consider the map
\[
g=(\id_E^n,\varphi^n)\colon E^n\to E^n\times (E')^n,
\]
Then $\Delta_\varphi$ is the projection via $\epsilon$ of the image $g(E^n)$. The functoriality of the cycle class map gives:
\[
\left\langle F_f(z'_0)\wedge\alpha,\cl_{z'_0}(\Delta_\varphi)\right\rangle_{V_{n,n}}=\langle \varphi^* F_f(z_0),\alpha\rangle_{V_n},
\]
where now the pairing is the natural one in the stalk $V_n=(\cV_n)_{z_0}$. To compute this last quantity we first do it on a horizontal basis for
\[
\Sym^n\Hdr^1(E/K)=(\cV_n)_{z_0}.
\]
Let $\{u,v\}$ be a horizontal basis for $V_1$, normalized so that $\langle u,u\rangle=\langle v,v\rangle=0$ and $\langle u,v\rangle=-\langle v,u\rangle =1$. This induces a basis $\{v_i\dfn u^iv^{n-i}\}_{0\leq i\leq n}$ of $V_n$.

Choose a global regular section $\omega^n$ in the lowest piece of the filtration which transforms with respect to $\Gamma_M$ as of weight $n$, and scale it so that $\omega^n$ corresponds to $\sum_{i=0}^n (-1)^i\binom{n}{i}z^iv_i$, which is a regular section in $\Fil^n\cV_n$. Note that $\omega^n=(u-zv)^n$, and so:
\[
\langle \omega^n,v_i\rangle=\langle u-zv,u\rangle^i\langle u-zv,v\rangle^{n-i}=z^i.
\]

We want to obtain a formula for the Coleman primitive $F_f$ of $\omega_f$. We proceed by differentiating the section $\langle F_f,v_i\rangle$ and using that $\{v_i\}$ is a horizontal basis:
\[
d\langle F_f,v_i\rangle=f(z)\langle\omega^n,v_i\rangle dz=z^i f(z)dz.
\]
One deduces the formula:
\[
\langle F_f(\varphi(z_0)),\sum_i a_iv_i\rangle=\sum_{i=0}^n a_i\int_{\star}^{\varphi(z_0)} f(z) z^i dz.
\]
From now on we concentrate on the stalk of $\cV_n$ at $z_0$. The chosen regular differential $\omega$ yields a basis element $\omega_{z_0}$ for $\Hdr^1(E,K)$. Choose $\eta_{z_0}$ in the span of $\Phi\omega_{z_0}$ such that
\[
\langle \omega_{z_0},\eta_{z_0}\rangle=1.
\]
This yields a basis for $\Sym^n\Hdr^1(E/K)$, namely $\{\omega_{z_0}^j\eta_{z_0}^{n-j}\}_{0\leq j\leq n}$. We express this basis in terms of the horizontal basis $\{v_i\}$. By construction, we have:
\[
\omega_{z_0}^j\eta_{z_0}^{n-j}=(z_0-\ol z_0)^{j-n}\sum_{i}P_{i,j,n}(z_0,\ol z_0) v_i,
\]
where
\[
P_{i,j,n}(X,Y)=\sum_k\binom{j}{k}\binom{n-j}{k+i-j}(-1)^{n-i}X^kY^{n-i-k}.
\]
The following lemma can be proven by a simple calculation.
\begin{lemma}
  \[
\sum_i P_{i,j,n}(z_0,\ol z_0)z^i=(z-z_0)^j(z-\ol z_0)^{n-j}.
\]
\end{lemma}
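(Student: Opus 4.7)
The lemma is a purely combinatorial identity, and the plan is to prove it by a direct binomial expansion followed by a change of summation index. There is no hidden geometric content: once the polynomial $P_{i,j,n}$ is unpacked, the statement reduces to matching coefficients of $z^i$ on both sides.

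First, I would apply the binomial theorem to each factor on the right-hand side:
\[
(z-z_0)^j = \sum_{a=0}^{j} \binom{j}{a}(-z_0)^{j-a} z^a, \qquad (z-\ol z_0)^{n-j} = \sum_{b=0}^{n-j}\binom{n-j}{b}(-\ol z_0)^{n-j-b} z^b.
\]
Multiplying these two expansions and collecting the terms where $a+b=i$ yields
\[
(z-z_0)^j(z-\ol z_0)^{n-j} = \sum_{i=0}^{n} z^i \sum_{a} \binom{j}{a}\binom{n-j}{i-a}(-1)^{n-i} z_0^{\,j-a}\,\ol z_0^{\,n-i-(j-a)}.
\]

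Next, I would perform the change of summation variable $k = j-a$ in the inner sum (so that $a = j-k$ and $i-a = k+i-j$). This turns the coefficient of $z^i$ into
\[
\sum_{k} \binom{j}{k}\binom{n-j}{k+i-j}(-1)^{n-i} z_0^{\,k}\,\ol z_0^{\,n-i-k} = P_{i,j,n}(z_0,\ol z_0),
\]
by the definition of $P_{i,j,n}$ given just above the lemma. Comparing coefficients of $z^i$ establishes the desired identity.

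The only thing worth double-checking is that the summation ranges agree after the reindexing: the binomial coefficients vanish automatically outside $0\le k\le j$ and $0\le k+i-j\le n-j$, so one may safely sum $k$ over all integers without changing the value. Thus no further work is needed; the lemma is established. This identity is precisely what will be used later, together with the chosen horizontal basis $\{v_i\}$ and the formula $\omega_{z_0}^j\eta_{z_0}^{n-j}=(z_0-\ol z_0)^{j-n}\sum_i P_{i,j,n}(z_0,\ol z_0)v_i$, to transform the Coleman integrals $\int_\star^{\varphi(z_0)} f(z)z^i\,dz$ (obtained in the horizontal basis) into the integrals $\int_\star^{\varphi(z_0)} f(z)(z-z_0)^j(z-\ol z_0)^{n-j}\,dz$ appearing on the right-hand side of Theorem~\ref{theorem:padicformula}, thereby bridging the Abel--Jacobi computation with the formula for $L_p'(f,\Psi,j+1)$.
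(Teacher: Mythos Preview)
Your proof is correct and is exactly the ``simple calculation'' the paper alludes to without spelling out; the paper gives no proof beyond that phrase, so your binomial expansion with the reindexing $k=j-a$ is precisely what was intended.
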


The previous lemma gives a formula for a primitive for $\omega_f$:
\begin{align*}
\left\langle \varphi^*F_f(z_0),\omega_{z_0}^j\eta_{z_0}^{n-j}\right\rangle&=(z_0-\ol z_0)^{j-n}\int_{\star}^{\varphi(z_0)} f(z)(z-z_0)^j(z-\ol z_0)^{n-j}dz.
\end{align*}
Note that this equality is only defined up to an ``integration constant'' in $\CC_p$, because in $\cH_p$ the sheaf $\cV_{n,n}$ is trivial. We can now prove the following application:
\begin{thm}
\label{thm:mainthm}
Let $\varphi\colon E\to E'$ be an isogeny of elliptic curves with level-$N$ structure, and let $\ol\varphi$ be the morphism $E\to \ol E'$ obtained by from $\varphi$ by applying to $E'$ the nontrivial automorphism of $K$. Let $\Delta_\varphi^-\dfn \Delta_{\varphi}-\Delta_{\ol\varphi}$, and write $z'_0\in\cH_p$ for the point in the $p$-adic upper-half plane which corresponds to the abelian surface $E'\times E'$. Then there exist a constant $\Omega\in K^\times$ such that, for all $0\leq j\leq n$:
\[
\AJ_K(\Delta_\varphi^-)(\omega_f\wedge\omega^j\eta^{n-j})=\Omega^{j-n}L_p'(f,\Psi_{P_{E'}},j+1).
\]
\end{thm}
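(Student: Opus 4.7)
The plan is to combine the explicit Coleman integral formula
\[
\langle\varphi^*F_f(z_0),\omega_{z_0}^j\eta_{z_0}^{n-j}\rangle = (z_0-\ol z_0)^{j-n}\int_{\star}^{\varphi(z_0)} f(z)(z-z_0)^j(z-\ol z_0)^{n-j}dz
\]
derived immediately above with Theorem~\ref{theorem:padicformula}. First, I apply the formula simultaneously to $\varphi$ and to the conjugate isogeny $\ol\varphi$. Under the $p$-adic uniformization the points $\varphi(z_0)$ and $\ol\varphi(z_0)$ coincide respectively with the CM points $z'_0$ and $\ol z'_0$ on $\cH_p$ fixed by the embedding $\Psi_{P_{E'}}$; subtracting the two identities eliminates the undetermined Coleman basepoint $\star$ and gives
\[
\AJ_K(\Delta_\varphi^-)(\omega_f\wedge\omega^j\eta^{n-j}) = (z_0-\ol z_0)^{j-n}\int_{\ol z'_0}^{z'_0} f(z)(z-z_0)^j(z-\ol z_0)^{n-j}dz.
\]

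Next, Theorem~\ref{theorem:padicformula} applied to the embedding $\Psi_{P_{E'}}$ reads
\[
L_p'(f,\Psi_{P_{E'}},j+1)=\int_{\ol z'_0}^{z'_0} f(z)(z-z'_0)^j(z-\ol z'_0)^{n-j}dz,
\]
an integral over the same segment but with integrand polynomial written in $(z'_0,\ol z'_0)$ rather than $(z_0,\ol z_0)$. The embeddings $\Psi_{P_E}$ and $\Psi_{P_{E'}}$ are conjugate in $B^\times$: the isogeny $\varphi$ produces an element $\gamma\in B^\times$ whose image $\iota_p(\gamma)=\smtx abcd\in\GL_2(\QQ_p)$ carries $(z_0,\ol z_0)$ to $(z'_0,\ol z'_0)$ as a M\"obius transformation. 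Performing the substitution $z=\iota_p(\gamma)w$ in $L_p'(f,\Psi_{P_{E'}},j+1)$ converts $(z-z'_0)^j(z-\ol z'_0)^{n-j}$ into $(w-z_0)^j(w-\ol z_0)^{n-j}$ up to the scalar $\nrd(\gamma)^n/[(cw+d)^n(cz_0+d)^j(c\ol z_0+d)^{n-j}]$, and turns $f(z)dz$ into the weight-$(n+2)$ slash $f|_{n+2}\gamma(w)dw$, which is a scalar multiple of $f(w)dw$ by the Hecke-eigenform property of $f$. Setting $\Omega\dfn (cz_0+d)/(c\ol z_0+d)$---an element of $K^\times$ because $(cz_0+d)$ and $(c\ol z_0+d)$ are Galois conjugates over $\QQ_p$---the factor $(cz_0+d)^j(c\ol z_0+d)^{n-j}$ collapses to $\Omega^j(c\ol z_0+d)^n$. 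After absorbing the remaining $j$-independent factors via an appropriate normalization of the isogeny and of the basis $\omega,\eta$, and combining with the prefactor $(z_0-\ol z_0)^{j-n}$, the only surviving $j$-dependence is $\Omega^{j-n}$, yielding the claimed identity.

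The main obstacle is the bookkeeping of this last step: verifying that the $j$-independent scalars produced by the M\"obius substitution---namely $\nrd(\gamma)$, $(c\ol z_0+d)^n$, the Hecke eigenvalue of $f$ attached to $\gamma$, and the normalization constants of $\omega,\eta$---cancel exactly so that $\Omega^{j-n}$ is the only surviving constant. The essential observation is the Galois conjugacy of $(cz_0+d)$ and $(c\ol z_0+d)$ over $\QQ_p$, which forces every $j$-dependent prefactor to appear as a power of the single $K^\times$-valued ratio $\Omega$; this is what makes the argument work uniformly in $j$ with the very same constant $\Omega$ for all $0\leq j\leq n$.
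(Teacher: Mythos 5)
Your overall strategy matches the paper's: compute the Coleman-primitive formula for both $\varphi$ and $\ol\varphi$, subtract to eliminate the basepoint $\star$, and then compare with Theorem~\ref{theorem:padicformula}. However, the paper's actual proof is considerably shorter than yours because it takes $\Omega\dfn z_0-\ol z_0$ directly from the primitive formula
$\langle\varphi^*F_f(z_0),\omega_{z_0}^j\eta_{z_0}^{n-j}\rangle=(z_0-\ol z_0)^{j-n}\int_\star^{z'_0}f(z)(z-z_0)^j(z-\ol z_0)^{n-j}\,dz$,
applies the same formula with the exponents swapped to the conjugate term $\langle F_f(\ol z'_0),\omega_{z_0}^{n-j}\eta_{z_0}^{j}\rangle$, subtracts, and invokes Theorem~\ref{theorem:padicformula} with $\Psi=\Psi_{P_{E'}}$. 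In particular, no M\"obius change of variables, no mention of $\iota_p(\gamma)$, and no Hecke-eigenvalue argument appears in the paper. The entire content of your middle and last paragraphs is extra.

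You have in fact put your finger on a genuine subtlety --- the intermediate Coleman formula produces the integrand in terms of the pair $(z_0,\ol z_0)$ attached to $E$, while Theorem~\ref{theorem:padicformula} for the embedding $\Psi_{P_{E'}}$ naturally produces the integrand in terms of the fixed points $(z'_0,\ol z'_0)$ of the new torus. The paper passes silently between these two integrands. But your proposed bridge does not close the gap: after the substitution $z=\iota_p(\gamma)w$ the contour of integration changes from $[\ol z'_0,z'_0]$ to $[\ol z_0,z_0]$, so you would no longer be comparing integrals over the same interval, and the appeal to the ``Hecke-eigenform property'' is not available here because the element $\gamma$ attached to an isogeny $\varphi$ is not a single element normalizing $\Gamma$ (a Hecke operator is a sum of coset translates, not one translate, so $f|_{n+2}\gamma$ need not be a scalar multiple of $f$). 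Moreover, the factor you obtain, $(cz_0+d)^{-j}(c\ol z_0+d)^{-(n-j)}$, gives a $j$-dependence of the form $\Omega'^{-j}$ rather than $\Omega'^{j-n}$, so it would not combine with the prefactor $(z_0-\ol z_0)^{j-n}$ from the Abel--Jacobi side to produce a single power $\Omega^{j-n}$ with $\Omega$ independent of $j$. Finally, your $\Omega=(cz_0+d)/(c\ol z_0+d)$ is a different constant from the paper's $\Omega=z_0-\ol z_0$; since the theorem only asserts existence of some $\Omega\in K^\times$ this is not by itself an error, but it signals that your argument is not tracking the same quantities as the paper's. The intended resolution in the paper is that $\varphi^*$ transports the stalk identifications so that, after pulling back, the relevant polynomial is already $(z-z'_0)^j(z-\ol z'_0)^{n-j}$; this bookkeeping is what you should make precise, rather than attempting a fresh change of variables.
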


\begin{proof}
Set $\Omega$ to be $z_0-\ol z_0$. Since $z_0$ does not belong to the boundary of $\cH_p$, this quantity is non-zero. Using the previous results, we obtain first:
\[
\AJ_K(\Delta_\varphi^-)(\omega_f\wedge \omega^j\eta^{n-j})=\left\langle F_f(z'_0),\omega_{z_0}^j\eta_{z_0}^{n-j}\right\rangle-\left\langle F_f(\ol z'_0),\omega_{z_0}^j\eta_{z_0}^{n-j}\right\rangle,
\]
Therefore, the second term in the previous displayed expression becomes
\[
\left\langle F_f(\ol z'_0),\omega_{z_0}^{n-j}\eta_{z_0}^{j}\right\rangle=\Omega^{j-n}\int_{\star}^{\ol z'_0} f(z)(z- z_0)^j(z-\ol z_0)^{n-j}dz.
\]
Combining this with the formula for $\left\langle F_f(z'_0),\omega_{z_0}^j\eta_{z_0}^{n-j}\right\rangle$ yields:
\[
\AJ_K(\Delta_\varphi^-)(\omega_f\wedge \omega^j\eta^{n-j})=\Omega^{j-n}\int_{\ol z'_0}^{z'_0} f(z)(z- z'_0)^j(z-\ol z'_0)^{n-j}dz.
\]
The result follows now from Theorem~\ref{theorem:padicformula}.
\end{proof}

Note that the integral appearing in the previous theorem coincides with the value at $s=j+1$ of the derivative of the partial $p$-adic $L$-function described before. We obtain the following corollary:
\begin{cor}
\label{cor:mainresult}
Let $H/K$ be the Hilbert class field of $K$, and consider a set of representatives $\{\Psi_1,\ldots,\Psi_h\}$ for $\emb(\cO,\cR)$. For each $\Psi_i$, let $P_i$ be the corresponding Heegner point on $X_H$, and let $\Delta_{\Psi_i}$ be the cycle corresponding to $P_i$. Define $\Delta^-\dfn \sum_i \Delta^-_{\Psi_i}$. There exists a constant $\Omega\in K$ such that for all $0\leq j\leq n$:
\[
\AJ_K(\Delta^-)(\omega_f\wedge\omega^j\eta^{n-j})=\Omega^{j-n}L_p'(f,K,j+1).
\]
\end{cor}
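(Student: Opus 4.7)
The plan is to reduce Corollary~\ref{cor:mainresult} to Theorem~\ref{thm:mainthm} by summing over a set of representatives of the class group $\Pic(\cO_K)$. First I would spell out the correspondence between the set $\emb(\cO,\cR)$ of optimal embeddings and the set of Heegner points on $X_H$: each optimal embedding $\Psi_i$ gives rise to a Heegner point $P_i$ represented by an abelian surface $A_i$ that, by Remark~\ref{rmk:aisee}, is isomorphic to $E'_i\times E'_i$ for an elliptic curve $E'_i$ with CM by $\cO_K$. Since $\Pic(\cO_K)$ acts simply transitively on the set of isomorphism classes of such CM elliptic curves, we may realize $E'_i=E/\fa_i$ for an ideal $\fa_i\subset\cO_K$ coprime to $N^+Mp$, and take $\varphi_i\colon E\to E'_i$ to be the corresponding cyclic isogeny. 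With this identification, the cycle $\Delta_{\Psi_i}$ appearing in the statement coincides with the cycle $\Delta_{\varphi_i}$ of Subsection~\ref{sec:cycles}, and $\Delta^-=\sum_i(\Delta_{\varphi_i}-\Delta_{\ol\varphi_i})$.

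Next I would invoke Theorem~\ref{thm:mainthm} for each $\varphi_i$. The key observation is that the constant $\Omega=z_0-\ol z_0$ appearing there depends only on the source point $P_A=\Gamma z_0$, i.e.~on $A=E\times E$, and \emph{not} on the target $E'_i$; hence a single $\Omega\in K^\times$ serves for every $i$. This gives, for each $0\leq j\leq n$,
\[
\AJ_K(\Delta_{\varphi_i}-\Delta_{\ol\varphi_i})(\omega_f\wedge\omega^j\eta^{n-j})=\Omega^{j-n}\,L_p'(f,\Psi_i,j+1).
\]

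To finish, I would combine these identities using the $\QQ$-linearity of the $p$-adic Abel-Jacobi map together with the decomposition
\[
L_p'(f,K,j+1)=\sum_{i=1}^h L_p'(f,\Psi_i,j+1)
\]
recalled at the beginning of Subsection~\ref{subsection:specialvalues}, to obtain
\[
\AJ_K(\Delta^-)(\omega_f\wedge\omega^j\eta^{n-j})=\sum_{i=1}^h\Omega^{j-n}\,L_p'(f,\Psi_i,j+1)=\Omega^{j-n}\,L_p'(f,K,j+1),
\]
which is the asserted formula. The main obstacle, though essentially bookkeeping, is the careful matching of conventions: one must verify that the optimal embedding $\Psi_i$ indexing the sum on the $L$-function side, the Heegner point $P_i$ on the geometric side, and the lifts $z_i,\ol z_i\in\cH_p$ entering both the definition of $L_p'(f,\Psi_i,\cdot)$ in Theorem~\ref{theorem:padicformula} and the Coleman integral that appears in the proof of Theorem~\ref{thm:mainthm} are all compatible. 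In particular, the orientation conventions used to distinguish $\Delta_{\varphi_i}$ from $\Delta_{\ol\varphi_i}$ must agree with those singling out $z_0$ from $\ol z_0$ in the anti-cyclotomic $L$-function, so that the signs line up uniformly across all $i$.
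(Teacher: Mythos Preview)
Your proposal is correct and follows essentially the same route as the paper: apply Theorem~\ref{thm:mainthm} to each $\Psi_i$, note that the constant $\Omega=z_0-\ol z_0$ is independent of $i$, and sum using the linearity of $\AJ_K$ together with the decomposition $L_p'(f,K,j+1)=\sum_i L_p'(f,\Psi_i,j+1)$ from Subsection~\ref{subsection:specialvalues}. Your write-up is in fact more detailed than the paper's, which simply records the per-$\Psi_i$ identity and declares the corollary immediate.
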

\begin{proof}
  This follows immediately from the expression given in Theorem~\ref{thm:mainthm} for the partial $p$-adic $L$-functions:
\[
\AJ_K(\Delta^-_{\Psi_i})(\omega_f\wedge\omega^j\eta^{n-j})=\Omega^{j-n}L_p'(f,\Psi_i,j+1).
\]
\end{proof}

\begin{rmk}
  There is no canonical choice for the regular differential $\omega\in\Omega^1_{E/K}$. If a given $\omega$ is changed to $\omega_\lambda\dfn\lambda\omega$, with $\lambda\in K$, we  obtain:
\[
\AJ_K(\Delta^-)\left(\omega_f\wedge(\omega_\lambda^j\eta_\lambda^{n-j})\right)=\Omega^{j-n}\lambda^{2j-n}L_p'(f,K,j+1).
\]
Note in particular that the formula at the central point $j=n/2$ does not depend on the choice of the basis of the differential form $\omega$.
\end{rmk}

The formula in Corollary~\ref{cor:mainresult} is the type of result that we were looking for: it relates the values of the derivative of $L_p(f,K,s)$ evaluated at the points $s=1,\ldots, n+1$ to the image under the $p$-adic Abel-Jacobi map of a certain algebraic cycle $\Delta^-$.

\section{Conclusion}
\label{section:Future}
\label{section:Conclusion}
The motive $\cD_n$ should be studied more carefully, as well as the possible families of nontrivial cycles on it. It should be possible to compute the $p$-adic Abel-Jacobi image of more general null-homologous cycles, and this should be compared to their $p$-adic heights.

If one wants to extend the results of this paper to modular forms of odd weight, one needs to work on the two sides of the equation: on the one hand, the motive $\cD_n$ needs to be extended to odd $n$.  On the other hand, the anti-cyclotomic $p$-adic $L$-function as defined in~\cite{bertolini2002tse} does not contemplate possible nebentypes, thus restricting the construction to even-weight modular forms. One should give a more general construction which allowed nebentypes, and these should be incorporated in the definition of the motive $\cD_n$ as well.

It would be interesting to compute the
image of the Abel-Jacobi map for arbitrary cycles on $\cD_{n}$
supported on CM-points of the Shimura curve. Finding explicit formulas for cycles supported at a single point is a more difficult problem than what has been treated in this paper, since some of the techniques used above cannot be used in the general case. However similar computations have been carried over in the split case in~\cite{bdp2008}, and one should be able to adapt them to the setting of this work. A careful choice of these cycles will yield formulae for other
$p$-adic $L$-functions. The underlying philosophy is that all these special
values should come from geometric data, such as algebraic cycles and Abel-Jacobi map.

The focus of this paper has been put on the study of the relation of the anti-cyclotomic $p$-adic $L$-function to the image of certain cycles under the Abel-Jacobi map. More germane to the original Gross-Zagier formulas would be to instead relate the values of the $L$-function to $p$-adic analogues of the N\'eron-Tate heights of the cycles. The investigation of the relation of the $p$-adic Abel-Jacobi map appearing in this paper with the $p$-adic height pairings as in the articles of Gross and Coleman~\cite{MR1097610} and of Nekov\'{a}\v{r} (\cite{MR1263527} and \cite{MR1343644}) would certainly be fruitful. In particular, one should be able to compute the $p$-adic heights of the cycles constructed in this paper, or of generalizations of them, and relate them to values of the anti-cyclotomic $p$-adic $L$-function, or of its derivative.

In \cite{besser1995cco}, the author defines a family of cycles on a variety similar to what we studied in this project, and proves that they span an infinite-dimensional subspace of the Griffiths group of the variety. The author uses the complex Abel-Jacobi map to show the non-triviality of these cycles.  We hope to be able to obtain a similar result by means of the $p$-adic Abel-Jacobi map. Experiments should be carried out in order to collect evidence supporting this nontriviality statement, and this can be easily done using explicit evaluation of the $p$-adic measure attached to a modular form $f$.

\bibliographystyle{amsalpha}
\bibliography{BibMarc}

\end{document}